\numberwithin{equation}{section}
\newcommand{\Prob}{\mathbb{P}}
\newcommand{\Q}{\mathbb{Q}}
\newcommand{\R}{\mathbb{R}}
\newcommand{\E}{\mathbb{E}}
\newcommand{\I}{\mathbb{I}}
\newcommand{\N}{\mathbb{N}}
\newcommand{\calN}{\mathcal{N}}
\newcommand{\dd}{\mathrm{d}}
\renewcommand{\P}{\Prob}
\renewcommand{\S}{\mathbb{S}}
\renewcommand{\sp}[2]{#1 \cdot #2}
\newcommand{\ind}[1]{\I_{\{ #1\}}}
\theoremstyle{plain}
\newtheorem{theorem}{Theorem}[section]
\newtheorem{lemma}[theorem]{Lemma}
\newtheorem{corollary}[theorem]{Corollary}
\newtheorem{proposition}[theorem]{Proposition}
\newtheorem{conjecture}[theorem]{Conjecture}
\theoremstyle{remark}
\newtheorem{remark}[theorem]{Remark}
\title[Derivative martingale of the BBM in dimension $d \geq 1$]{Derivative martingale of the branching Brownian motion in dimension $d \geq 1$}
\author{Roman Stasiński}
\author{Julien Berestycki}
\address{Department of Statistics, University of Oxford, 24-29 St Giles’, Oxford OX1 3LB, UK}
\email{roman.stasinski@sjc.ox.ac.uk}
\email{julien.berestycki@stats.ox.ac.uk}
\author{Bastien Mallein}
\address{Université Sorbonne Paris Nord, LAGA, UMR 7539, F-93430, Villetaneuse, France}
\email{mallein@math.univ-paris13.fr}
\begin{document}
\begin{abstract}
We consider a branching Brownian motion  in $\R^d$. We prove that there exists a random subset $\Theta$ of $\S^{d-1}$ such that the limit of the derivative martingale exists simultaneously for all directions $\theta \in \Theta$ almost surely. This allows us to define a random measure on $\S^{d-1}$ whose density
is given by the derivative martingale. 

The proof is based on first moment arguments: we approximate the martingale of interest by a series of processes, which do not take into account particles that travelled too far away. We show that these new processes are uniformly integrable martingales whose limits can be made to converge to the limit of the original martingale.
\end{abstract}

\maketitle
\section{Introduction}

Consider  a branching Brownian motion in dimension $d \geq 1$. This is a particle system in which  independent particles move in $\R^d$ as Brownian motions and branch independently at rate 1 into two particles. This system behaves as a growing cloud of diffusing particles. Let us fix the notation. We denote by $\P_x$ the law of the branching Brownian motion starting from one particle at position $x\in\R^d$, (writing $\P$ for $\P_0$ for simplicity). For all times $t \geq 0$, we denote by $\calN_t$ the set of particles alive at time $t$, and for each particle $j \in \mathcal{N}_t$ and $s \leq t$, we write $X_s(j)$ for the position that $j$, or its ancestor at time $s$, occupied at time $s$. The natural filtration of the branching Brownian motion is denoted by $(\mathcal{G}_t, t \geq 0)$.

In \cite{Mallein2015}, Mallein studied the maximal displacement of this model, i.e. the quantity 
\[
R_t  = \max_{j \in \mathcal{N}_t} \| X_t(j) \|, \quad t\ge 0.
\]
He showed that as $t \to \infty$
\begin{equation}
  \label{eq:r_d_tightness}
R_t =\sqrt{2} t + \frac{d-4}{2\sqrt 2} \log t+O(1),
\end{equation}
 where $O(1)$ is a process $Y_t$ such that $\lim_{K\to \infty} \P( \sup_t |Y_t| >K)=0$, thus generalising a famous result of Bramson \cite{Bramson1978} for $d=1$.    
 \medskip

Imagine now that we want to know in which direction $D(t)$ is the particle at distance $R_t$ at time $t$. Under $\P_0$, the process is completely spherically symmetric and it is thus evident that the distribution of the direction $D(t)$ of this extremal particle is uniform on the sphere $\S^{d-1}$. However, if we first observe the process up to time $s$ and then try to guess the direction of the furthest particle at a later time $t$, the answer obviously depends on the configuration we observe at time $s$, even in the limit $t\to \infty$. Advantages gained or delays incurred early in a given direction are never forgotten. 

It is believed that almost surely, for all measurable sets $A \subset \S^{d-1}$
\[
  \lim_{s\to  \infty} \lim_{t\to \infty} \P(  D(t)\in A \ | \ \mathcal{G}_s ) =  \mu(A),
\]
where $\mu$ is a random probability measure which captures what happens early on in the life of the process. What should this measure be? 

To answer this question, it is instructive to look at the one-dimensional case. When $d=1$, it is well-known that the asymptotic behaviour of the extremal particles (i.e. particles within distance $O(t^{1/2})$ from the maximal displacement at time $t$) is mainly driven by the limit of the so-called derivative martingale, defined by
\begin{align*}
  Z_t^+ := \sum_{j \in \mathcal{N}_t} (\sqrt{2}t - X_t(j))e^{\sqrt{2}(X_t(j) - \sqrt{2}t)}.
\end{align*}
Although $(Z_t^+,t \geq 0)$ is known to be a non-uniformly integrable martingale, and clearly takes both positive and negative values, Lalley and Sellke \cite{LS1987} proved that it does have an almost sure limit $Z_\infty^+ := \lim_{t \to \infty} Z_t^+$ which is positive almost surely, and moreover
\begin{align*}
  \max_{j \in \mathcal{N}_t} X_t(j) - m_t - \frac{\sqrt{2}}{2} \log Z_\infty^+
\end{align*}
converges in law to a Gumbel random variable, where $m_t = \sqrt{2} t - \frac{3}{2\sqrt 2} \log t$. 

We introduce the maximal and minimal displacements, i.e. the largest displacement in the positive and negative direction:
\begin{equation*}
M^+_t := {}  \max_{j \in \mathcal{N}_t} X_t(j) \quad \text{ and } \quad
M^-_t := {}  \min_{j \in \mathcal{N}_t} X_t(j),
\end{equation*}
as well as the derivative martingale in the negative direction, which is the derivative martingale of the BBM $(-X_t(u), u \in \mathcal{N}_t)$. In other words, we set
\begin{align*}
  Z_t^- := \sum_{j \in \mathcal{N}_t} (\sqrt{2}t + X_t(j))e^{\sqrt{2}(X_t(j) + \sqrt{2}t)}
\end{align*}
and $Z_\infty^- := \lim_{t \to \infty} Z_t^-$. 
As far as we are aware,  the joint convergence in distribution of $(M^+_t, M^-_t)$ had not been considered until now.
\begin{theorem}
There exists a constant $c_\star$ such that for all $y,z \geq 0$ almost surely
\begin{multline*}
  \lim_{s \to \infty} \lim_{t \to \infty} \P\left(M^+_t - m_t \leq y, -M^-_t - m_t \leq z \ \bigg| \ \mathcal{G}_s \right)\\
  = \exp\left(-c_\star Z_\infty^+ e^{-\sqrt{2}y} -c_\star Z_\infty^- e^{-\sqrt{2}z}\right).
\end{multline*}
In other words, $\left(M^+_t-m_t - \frac{\sqrt{2}}{2} \log \left( c_\star Z_\infty^+\right),  -M^-_t - m_t - \frac{\sqrt{2}}{2} \log \left(c_\star Z_\infty^-\right)\right)$ converges in distribution towards a pair of independent Gumbel random variables with scale parameter $\frac{\sqrt{2}}{2}$.
\label{thm:main1d}
\end{theorem}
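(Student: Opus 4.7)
The plan is a two-sided adaptation of the Lalley-Sellke argument. By the branching property at time $s$,
\begin{equation*}
\P\bigl(M_t^+ \le m_t + y,\, -M_t^- \le m_t + z \mid \mathcal{G}_s\bigr) = \prod_{u \in \calN_s} G_{t-s}\bigl(m_t + y - X_s(u),\, m_t + z + X_s(u)\bigr),
\end{equation*}
where $G_r(a,b) := \P_0(M_r^+ \le a,\, -M_r^- \le b)$. To take $t \to \infty$ inside the product, the crux is to identify $\lim_{r \to \infty} G_r(m_r + a, m_r + b)$ for fixed $a, b$.

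\textbf{Step 2: Asymptotic independence of the two extremes.} The key new ingredient is the factorization
\begin{equation*}
\lim_{r \to \infty} G_r(m_r + a, m_r + b) = F^+(a)\, F^-(b),
\end{equation*}
where $F^\pm(x) := \E[\exp(-c_\star Z_\infty^\pm e^{-\sqrt 2 x})]$ are the one-sided Lalley-Sellke limits. I would prove this via a many-to-two (pair-spine) estimate: control the expected number of pairs $(u, v) \in \calN_r^2$ with $X_r(u) \ge m_r + a$ and $X_r(v) \le -m_r - b$ whose most recent common ancestor lives until time at least $s_0$. An elementary Gaussian computation --- integrating the joint tail of two independent Brownian increments after branching, over the divergence time $s_1 \in [s_0, r]$ --- shows this expectation tends to zero as $s_0 \to \infty$ uniformly in $r$. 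Consequently, the particles realizing the right and left extremes are, with high probability, descendants of two distinct ancestors at time $s_0$; by the branching property the corresponding subtrees are independent, and the factorization follows from the one-sided Lalley-Sellke limit applied to each.

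\textbf{Step 3: Derivative martingale and $s \to \infty$.} Substituting the factorization into Step 1 and sending $t \to \infty$ produces $\prod_{u \in \calN_s} F^+(y + \sqrt 2 s - X_s(u))\, F^-(z + \sqrt 2 s + X_s(u))$. Using the Lalley-Sellke tail expansion $-\log F^\pm(x) \sim c_\star\, x\, e^{-\sqrt 2 x}$ as $x \to \infty$ (which reflects the heavy tail of $Z_\infty^\pm$), the sum of logarithms reduces --- after truncating out the few particles with $X_s(u)$ near $\pm\sqrt 2 s$ and using that the critical additive martingale $W_s^\pm \to 0$ almost surely --- to $c_\star e^{-\sqrt 2 y} Z_s^+ + c_\star e^{-\sqrt 2 z} Z_s^-$ plus a vanishing remainder. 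Letting $s \to \infty$ and invoking the almost sure convergence $Z_s^\pm \to Z_\infty^\pm$ yields the stated a.s.\ conditional limit; the unconditional convergence in distribution of the two recentered extremes then follows by bounded convergence. The main obstacle is Step 2: quantifying the decoupling of the right-most and left-most extremes in a single BBM. In the $d$-dimensional setting of this paper, this is essentially a special case of the decorrelation of extremes along distinct directions of $\S^{d-1}$, so the 1D theorem is likely obtainable as a by-product of the machinery developed for the random measure on the sphere.
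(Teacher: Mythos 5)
Your Step~1 is the same branching decomposition the paper uses, and the endgame in Step~3 is in the right spirit, but the factorization asserted in Step~2 is false as stated. The true limit is
\begin{equation*}
\lim_{r\to\infty} G_r(m_r+a, m_r+b) = \E\left[\exp\left(-c_\star Z_\infty^+ e^{-\sqrt 2 a} - c_\star Z_\infty^- e^{-\sqrt 2 b}\right)\right],
\end{equation*}
which is exactly the unconditional ($s=0$) form of Theorem~\ref{thm:main1d} and does \emph{not} equal $F^+(a)F^-(b)$: $Z_\infty^+$ and $Z_\infty^-$ are positively correlated, both being functionals of the same early genealogy (a late first branching, say, shrinks both). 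The many-to-two estimate you propose shows at best that the right- and left-extremal particles descend from distinct time-$s_0$ ancestors with high probability, which gives independence \emph{conditionally on} $\mathcal{G}_{s_0}$; averaging over $\mathcal{G}_{s_0}$ reinstates the coupling, so the product of the one-sided Lalley--Sellke constants cannot come out. What you actually need --- and what a many-to-two bound can legitimately deliver --- is only the weaker tail statement that as $a,b\to\infty$, $1-\lim_r G_r(m_r+a,m_r+b) = \big(1-\omega(a)\big)+\big(1-\omega(b)\big)+o(\cdot)$ with the cross term subdominant; this suffices for Step~3 because the arguments $a_j=\sqrt2 s+y-X_s(j)$ and $b_j=\sqrt2 s+z+X_s(j)$ both diverge with $s$.

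The paper obtains this tail control with much less machinery, and never evaluates $\lim_r G_r$ at all. It sandwiches each factor $\nu_{s,t}$ via inclusion-exclusion, and bounds the per-subtree cross term $\zeta_{s,t}$ by one observation: since $a_j+b_j=2\sqrt2 s+y+z$, realizing both extremes inside a single time-$s$ subtree forces some particle of that subtree above $m_{t-s}+(y\wedge z)+\sqrt2 s+o(1)$, hence $\zeta_{s,t}\le \P\big(M^+_{t-s}-m_{t-s}\ge (y\wedge z)+\sqrt 2 s + o(1)\big)$, which by Bramson's uniform tail is $O\big(s\,e^{-2s}\big)$; summed over the $\sim e^s$ particles alive at time $s$ this is $O(s\,e^{-s})\to 0$. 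No pair-spine or two-particle moment computation is required --- only the one-dimensional Bramson tail, the union bound, and the a.s.\ convergence of the additive and derivative martingales. Finally, your closing speculation that the one-dimensional theorem drops out of the $d$-dimensional construction is not how the paper proceeds: Section~\ref{sec:1dim} is self-contained, and the convergence of the derivative-martingale measure on $\S^{d-1}$ does not by itself control the joint law of extremes in opposite directions.
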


As a consequence, conditionally on $(Z_\infty^+,Z_\infty^-)$ the probability that the direction of the furthest particle at a large time is in the positive direction is proportional to $Z_\infty^+$.
\begin{corollary}
We have
\begin{align*}
\lim_{s \to \infty} \lim_{t \to \infty} \P\left(M^+_t > -M^-_t \ \Big| \ \mathcal{G}_s\right) = \frac{Z^+_\infty}{Z_\infty^{-} + Z^+_\infty} \quad \text{a.s.}
\end{align*}
\label{cor:1d}
\end{corollary}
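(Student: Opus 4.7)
The plan is to deduce Corollary \ref{cor:1d} directly from Theorem \ref{thm:main1d} via an exponential race computation. First, I would observe that $\{M^+_t > -M^-_t\} = \{(M^+_t - m_t) > (-M^-_t - m_t)\}$, so Theorem \ref{thm:main1d} yields, in the iterated limit $t \to \infty$ then $s \to \infty$ and conditionally on $\mathcal{G}_s$, convergence of the joint law of $(M^+_t - m_t, -M^-_t - m_t)$ to $(G^+ + \tfrac{\sqrt 2}{2}\log(c_\star Z^+_\infty),\, G^- + \tfrac{\sqrt 2}{2}\log(c_\star Z^-_\infty))$, where $G^+, G^-$ are independent Gumbel variables of scale $\tfrac{\sqrt{2}}{2}$ independent of $(Z^+_\infty, Z^-_\infty)$. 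Since the limit law has a smooth joint density that puts no mass on the diagonal $\{y = z\}$, the half-plane $\{y > z\}$ is a continuity set, and a conditional Portmanteau argument gives
\[
  \lim_{s \to \infty} \lim_{t \to \infty} \P(M^+_t > -M^-_t \mid \mathcal{G}_s) = \P\bigl(G^+ + \tfrac{\sqrt 2}{2}\log(c_\star Z^+_\infty) > G^- + \tfrac{\sqrt 2}{2}\log(c_\star Z^-_\infty) \bigm| Z^+_\infty, Z^-_\infty\bigr).
\]

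Second, I would evaluate the right-hand side via a standard exponential race. Setting $E^\pm := e^{-\sqrt{2} G^\pm}$, the choice of scale makes $E^+, E^-$ independent and $\mathrm{Exp}(1)$. Multiplying the inequality inside the probability by $-\sqrt{2}$ and then exponentiating (the constants $c_\star$ cancel) transforms the event into $\{E^+/Z^+_\infty < E^-/Z^-_\infty\}$. The Lalley--Sellke theorem \cite{LS1987} guarantees $Z^\pm_\infty > 0$ almost surely, so conditional on $(Z^+_\infty, Z^-_\infty)$ the quantities $E^+/Z^+_\infty$ and $E^-/Z^-_\infty$ are independent exponentials with respective rates $Z^+_\infty$ and $Z^-_\infty$. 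The classical identity $\P(X < Y) = \alpha/(\alpha + \beta)$ for independent $X \sim \mathrm{Exp}(\alpha),\, Y \sim \mathrm{Exp}(\beta)$ then gives
\[
  \P\!\left(\frac{E^+}{Z^+_\infty} < \frac{E^-}{Z^-_\infty} \,\bigg|\, Z^+_\infty, Z^-_\infty\right) = \frac{Z^+_\infty}{Z^+_\infty + Z^-_\infty},
\]
which is the stated limit.

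The only mildly subtle point is the passage from Theorem \ref{thm:main1d}, which is formulated as convergence of a joint CDF restricted to $y, z \geq 0$, to the convergence of the probability of $\{M^+_t > -M^-_t\}$, an event involving a difference that can be negative. I do not expect trouble here: the joint CDF on the positive quadrant together with the one-dimensional Lalley--Sellke marginal convergence uniquely identifies the limiting law as the product of independent Gumbels shifted by $\tfrac{\sqrt 2}{2}\log(c_\star Z^\pm_\infty)$, after which Portmanteau applied to the continuity set $\{y > z\}$ delivers the required convergence of conditional probabilities.
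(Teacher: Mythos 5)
Your proof is correct and follows essentially the same route as the paper: use Theorem \ref{thm:main1d} to convert the event into a race between independent Gumbels shifted by $\tfrac{\sqrt 2}{2}\log(c_\star Z^\pm_\infty)$, then compute the race probability. The only packaging difference is in the final step: the paper proves a direct Gumbel-race identity by integration, whereas you exponentiate to obtain independent $\mathrm{Exp}(1)$ variables and invoke the classical exponential race $\P\bigl(E^+/Z^+_\infty < E^-/Z^-_\infty \mid Z^\pm_\infty\bigr) = Z^+_\infty/(Z^+_\infty+Z^-_\infty)$; these are equivalent. One caution on your flagged subtle point: the assertion that knowing the joint CDF on the quadrant $\{y,z\ge 0\}$ together with the two marginals uniquely determines the joint law is false in general (one can redistribute mass inside $\{y<0,\,z<0\}$ without affecting the quadrant CDF or the marginals), so that justification of the Portmanteau step does not hold up. The clean fix, which also covers what the paper passes over silently, is to observe that the proof of Theorem \ref{thm:main1d} yields the same limiting formula for every fixed $(y,z)\in\R^2$, not just the nonnegative quadrant: the Bramson asymptotic $1-\omega(x)\sim c_\star x e^{-\sqrt 2 x}$ is applied at arguments of the form $\sqrt{2}s \mp X_s(j) + y$, which tend to $+\infty$ with $s$ for any fixed $y$. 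With that, conditional weak convergence of $(M^+_t-m_t,\,-M^-_t-m_t)$ to the pair of shifted independent Gumbels follows and Portmanteau applies as you claim.
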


\medskip

It is straightforward from the definition of the branching Brownian motion, that for all $\theta \in \S^{d-1}$, its projection on the direction $\theta$ (the process $(\sp{X_t(j)}{\theta}, u \in \mathcal{N}_t)$) is a branching Brownian motion in dimension one. Thus, for each $\theta\in \S^{d-1}$ we can define the derivative martingale of $X$ in direction $\theta$ as
\begin{align*}
  Z_t(\theta) := \sum_{j \in \mathcal{N}_t}(\sqrt{2}t - \sp{X_t(j)}{\theta})e^{\sqrt{2}(\sp{X_t(j)}{\theta} - \sqrt{2}t)}
\end{align*}
and  for each $\theta \in \S^{d-1}$, the limit $\lim_{t \to \infty} Z_t(\theta) = Z_\infty(\theta)$ exists a.s. 

Coming back to the direction $D(t)$ of the extremal particle, it is natural to think  that, as in dimension one, the random measure $\mu$ should give more mass to regions where $Z_\infty(\theta)$ is large. In fact, $\mu$ should have  a density given by the normalized version of $\theta \mapsto   Z_\infty(\theta)$. That is, for a measurable set $B \subset \S^{d-1}$, we would expect $\mu(B)= \left.\int_B Z_\infty(\theta) \sigma(\dd \theta)  \middle/  \int_{\S^{d-1}} Z_\infty(\theta) \sigma(\dd \theta)\right.$, where $\sigma(\dd \theta)$ stands for the surface measure of $\S^{d-1}$.

However, the problem is that we do not have a.s. existence of the limit $Z_\infty(\theta)$ for all $\theta \in \S^{d-1}$ simultaneously and so the above integrals are not {\it a priori} well defined. Observe for instance that by \eqref{eq:r_d_tightness} one has
\begin{align*}
  \inf_{\theta \in \S^{d-1}} Z_t(\theta) \leq -C( \log t)t^{(d-4)/2} \text{ with high probability,}
\end{align*}
hence the derivative martingale may be very small in exceptional directions, at least in dimension $d\geq 4$. This is due to the fact that in higher dimensions particles travel farther away from $0$ than in dimension one, which has the effect of lowering the value of $Z_t(\theta)$ in the (random) direction at which these far away particles are located. As a result, one cannot hope for uniform convergence to hold for the process $(Z_t(\theta))$. It is nonetheless the main object of this paper to show how one can make sense of the limit of the function $\theta \mapsto Z_t(\theta)$ in a weak sense. We also prove that almost surely the limit of $Z_t(\theta)$ actually exists for all $\theta$ in a set of full measure. Hence a rigorous meaning can be given to the associated measure $\mu$.

\medskip

In this article we prove the weak convergence of $(Z_t(\theta), \theta \in \S^{d-1})_{t \geq 0}$, seen as a random measure on the sphere. For two measurable functions $f,g: \ \S^{d-1} \mapsto \R$ we define 
\begin{align*}
\langle f,g \rangle := \int_{\S^{d-1}} f(\theta) g(\theta) \sigma(\dd \theta),
\end{align*}
where $\sigma$ is the Lebesgue measure on the sphere $\S^{d-1}$. We sometimes write $\langle f(\theta),g(\theta) \rangle$ to clarify how functions $f$ and $g$ depend on $\theta \in \S^{d-1}$.

The main result of this article is the following.
\begin{theorem}
Almost surely there exists a measurable subset $\Theta$ of $\S^{d-1}$ of full measure, such that $Z_\infty(\theta) := \lim_{t\to\infty} Z_t(\theta)$ exists for $\theta \in \Theta$, and for any bounded measurable function~$f$
\begin{align}
\lim_{t \to \infty} \langle Z_t, f \rangle = \langle Z_\infty, f \rangle \textrm{ a.s.},
\label{eq:thm_main}
\end{align}
writing $Z_\infty(\theta) = 0$ for all $\theta \not \in \Theta$. Additionally, $0 < \lim_{t \to \infty} \langle Z_t, 1 \rangle < \infty$ almost surely.
\label{thm:main}
\end{theorem}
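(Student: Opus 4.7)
The plan is to smooth out the irregular $\theta$-dependence of $Z_t(\theta)$ by truncating the contribution of particles that wander too far from the origin, and then show that the resulting approximation is tight as the truncation is relaxed. For $K>0$, introduce the $\theta$-independent stopping line
\[
\tau_j^K := \inf\{s \ge 0 : \|X_s(j)\| > \sqrt{2}\, s + K\}
\]
and the truncated function on $\S^{d-1}$,
\[
Z_t^K(\theta) := \sum_{j \in \mathcal N_t :\, \tau_j^K > t} \bigl(K + \sqrt{2}\, t - \sp{X_t(j)}{\theta}\bigr)\, e^{\sqrt{2} (\sp{X_t(j)}{\theta} - \sqrt{2}\, t)}.
\]
On $\{\tau_j^K > t\}$ we have $\|X_t(j)\| \le \sqrt{2}\, t + K$, hence $K + \sqrt{2}\, t - \sp{X_t(j)}{\theta} \ge 0$ for every $\theta$, so that $Z_t^K$ is a \emph{non-negative} function on $\S^{d-1}$ whose integral against any bounded measurable $f$ can be studied by a first moment argument.

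The first main step is to prove that, for every $K > 0$ and every bounded measurable $f : \S^{d-1} \to \R$, the process $(\langle Z_t^K, f\rangle, t\ge 0)$ is a uniformly integrable $(\mathcal G_t)$-martingale. The martingale property follows from the branching Markov property combined with the fact that, for each fixed $\theta$, the summand is a non-negative local martingale for the projected BBM stopped at $\tau^K$; since the stopping line depends only on $\|X_s(j)\|$, optional stopping commutes with integration over the sphere and over the genealogy. Uniform integrability comes from a first-moment computation: the many-to-one lemma reduces the expectation to a single drifted Brownian particle, and a Girsanov tilt by $\sqrt{2}\,\theta$ yields, after integrating over the sphere, an expression of the form
\[
\E\,\langle Z_t^K, f\rangle = \int_{\S^{d-1}} f(\theta)\,\tilde\E\!\left[(K + \sqrt{2}\, t - \sp{B_t}{\theta})\,\ind{\|B_s\|\le \sqrt{2}\, s + K,\, \forall s\le t}\right]\sigma(\dd\theta),
\]
where under $\tilde\P$, $B$ is a $d$-dimensional Brownian motion with drift $\sqrt{2}\,\theta$. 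Using the transience of $B$ under $\tilde\P$, the inner expectation is bounded by a constant $C(K)$ uniformly in $t$ and in $\theta$. By martingale convergence, $\langle Z_\infty^K, f\rangle := \lim_{t\to\infty}\langle Z_t^K, f\rangle$ exists a.s.\ and in $L^1$.

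The final step is the passage $K \to \infty$. Write $\langle Z_t, f\rangle - \langle Z_t^K, f\rangle$ as the sum of (i) a contribution from descendants of particles that have crossed the barrier before time $t$, controlled by a branching Markov decomposition at each $\tau_j^K$ together with a first-moment bound on such descendants, and (ii) the additive shift $-K \sum_{\tau_j^K > t} e^{\sqrt{2}(\sp{X_t(j)}{\theta} - \sqrt{2}\, t)}$ integrated against $f$, which can be controlled via the classical critical additive martingale. Both terms tend to $0$ in probability uniformly in $t$ as $K\to\infty$, thanks to the tightness estimate \eqref{eq:r_d_tightness}, which guarantees that no particle crosses the barrier with probability tending to one. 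This yields a.s.\ existence of $L(f) := \lim_{t \to \infty} \langle Z_t, f\rangle$ for every bounded $f$. Since for every fixed $\theta$ the classical (one-dimensional) Lalley--Sellke theorem, applied to the projected BBM, gives a.s.\ convergence $Z_t(\theta) \to Z_\infty(\theta) \in (0, \infty)$, Fubini provides a random full-measure set $\Theta \subset \S^{d-1}$ on which this limit exists; dominated convergence controlled by the uniform $L^1$ bound from step 2 identifies $L(f) = \int_\Theta f(\theta) Z_\infty(\theta) \sigma(\dd\theta) = \langle Z_\infty, f\rangle$. Finally, $\langle Z_\infty, 1\rangle > 0$ a.s.\ follows from $\P(Z_\infty(\theta) > 0) = 1$ for each $\theta$ combined with Fubini, while $\langle Z_\infty, 1\rangle < \infty$ follows from the $L^1$ bound on $\langle Z_\infty^K, 1\rangle$. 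The main obstacle is the truncation-error estimate in the last step: one has to control, uniformly in $t$, the net contribution of particles exiting the tube $\{\|x\| \le \sqrt{2}\, s + K\}$, balancing the direction-dependent sign cancellations of $\sqrt{2}\, t - \sp{X_t(j)}{\theta}$ against the rapid exponential decay. It is the averaging effect of integrating over $\S^{d-1}$, absent in the pointwise statement, that makes this quantitative control feasible.
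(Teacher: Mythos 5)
Your overall architecture is the right one — truncate by a barrier, show the truncated process is a UI martingale, pass to the limit as the barrier is pushed to infinity — but the choice of a \emph{constant} barrier $\sqrt{2}s + K$ is precisely the point on which the paper's argument departs from the classical one-dimensional scheme, and it is fatal here. Your truncation error estimate rests on the claim that ``no particle crosses the barrier with probability tending to one'' as $K\to\infty$, citing \eqref{eq:r_d_tightness}. That estimate controls $R_t$ at a \emph{fixed} time $t$; it does not control $\sup_{t\geq0}(R_t - \sqrt{2}t)$. In dimension $d\geq 2$ one has
\begin{align*}
\sup_{t\geq 0}\ \sup_{j\in\mathcal{N}_t}\bigl(\|X_t(j)\| - \sqrt{2}\,t\bigr) = \infty \qquad \text{a.s.},
\end{align*}
because the running maximum is governed by a curved barrier with a $\frac{d-1}{2\sqrt{2}}\log(1+s)$ correction (Lemma~\ref{fact:Mallein3.1}). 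Consequently, for every fixed $K$, with probability one \emph{some} particle eventually exits the tube $\{\|x\|\leq \sqrt{2}s + K\}$, and the event ``no particle ever crosses'' has probability $0$, not probability close to $1$. The paper explicitly flags this as the obstruction to the naive approach (Section~\ref{sec:pfStrategy}), and this is why the shaving is done along a slowly growing concave curve $\phi(t)\vee A$ with $\phi(t)-\frac{d-1}{2\sqrt{2}}\log(1+t)\to\infty$ (and $\phi(t)=o(t^{1/2-\epsilon})$), with the nonlinear $h$-transform function $R^\phi$ replacing the linear factor $(K+\sqrt{2}t - x\cdot\theta)$ to keep the truncated process a martingale under the curved killing. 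Your proposal has no analogue of this step. Without it, the $K\to\infty$ limit fails and the link $\lim_t \langle Z_t, f\rangle = \lim_{K}\lim_t\langle Z_t^K,f\rangle$ cannot be established.

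A secondary, but real, gap is in the identification $L(f) = \langle Z_\infty, f\rangle$. You invoke ``dominated convergence controlled by the uniform $L^1$ bound,'' but an $L^1$ bound is not a dominating function, and the functions $\theta\mapsto Z_t(\theta)$ (or $Z_t^K(\theta)$) are not \emph{a priori} dominated uniformly in $t$. The paper instead uses uniform integrability together with Fubini and a comparison of expectations (Proposition~\ref{prop:shaved_swap_integral_limit}): Fatou gives $\mathcal{Z} \geq \langle \liminf_t Z_t^\phi, f\rangle$, and equality of expectations forces a.s.\ equality, which then yields a.s.\ convergence of $Z_t^\phi(\theta)$ for a.e.\ $\theta$. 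Your draft should either carry out this expectation-comparison argument or supply an actual dominating function (which is not available here).
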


Although we only consider the case of a {\it binary} branching mechanism (particles always split into two daughter particles), it would be straightforward to generalise our results to a situation in which an independent random number $L$ of children is produced at each branching event, at least under the assumption $\E(L (\log L)^{2+\delta}) < \infty$ for some $\delta > 0$. Note that it was shown by Yang and Ren \cite{YR2011} that, in the case of the one-dimensional branching Brownian motion, the limit of the derivative martingale is non-degenerate if and only if $\E(L (\log L)^2 ) < \infty$. This result was then extended by Chen~\cite{Chen2015} to the case of branching random walks and recently Boutaud and Maillard simplified and streamlined the proofs of these limit theorems in \cite{BM2019}. We believe Theorem~\ref{thm:main} would hold under similar optimal integrability conditions, but the proof would require additional control on the law of a Brownian motion conditioned to stay below a curve. 

Let us now formulate a conjecture regarding the full extremal point process, from which the predicted behaviour of $D(t)$ follows. This conjecture is a multidimensional version of the description of the extremal point process of the one-dimensional branching Brownian motion obtained by Arguin Bovier and Kistler \cite{ABK2013}, and Aïdékon, Berestycki, Brunet and Shi \cite{ABBS2013}. Recall from \cite[Theorem~1.1]{Mallein2015} that
\begin{align*}
r_t := \sqrt{2} t + \frac{d-4}{2\sqrt{2}} \log t
\end{align*} 
is, up to an $O(1)$ error, the median of the maximal displacement of the $d$-dimensional branching Brownian motion. We also define the direction of a particle $u$ at time $t$ by $D_t(u):=X_t(u)/\|X_t(u)\|$ for $t\ge 0, u\in \mathcal{N}_t$.
\begin{conjecture}
There exists $c_{d}^\star>0$ such that
\begin{align*}
	\lim_{t \to \infty} \sum_{u \in \mathcal{N}_t} \delta_{D_t(u), \|X_t(u)\| - r_t} = \mathcal{L}(\dd \theta, \dd x) \text{ in law},
\end{align*}
where $\mathcal{L}$ is a decorated Poisson point process that can be constructed as follows. Let $(\theta_j,\xi_j)_{j \geq 1}$ be the atoms of a Poisson point process with intensity $c_{d}^\star Z_\infty(\theta) \sigma(\dd\theta) e^{-\sqrt{2}x} \dd x$ and $(D_j, j \geq 1)$ be i.i.d. point processes on $\R$ with common distribution $\mathcal{D}$. Then
\begin{align*}
	\mathcal{L} = \sum_{j \geq 1} \sum_{x \in D_j} \delta_{\theta_j, \xi_j + x}.
\end{align*}
\label{con:extremal_process}
\end{conjecture}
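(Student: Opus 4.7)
The plan is to adapt the two-scale cluster decomposition developed for the one-dimensional BBM by Arguin-Bovier-Kistler \cite{ABK2013} and Aïdékon-Berestycki-Brunet-Shi \cite{ABBS2013} to the multidimensional setting, using Theorem~\ref{thm:main} to supply the angular part of the limiting intensity. Two timescales are involved: an early truncation time $s$ at which the random measure $Z_\infty(\theta)\sigma(\dd\theta)$ has essentially been produced, and a late time window of length $t - s$ over which each subtree rooted at a particle of $\calN_s$ independently produces its own clan of extremal particles.

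First, I would apply the branching property at time $s$ to write
\begin{align*}
\sum_{u \in \calN_t} \delta_{D_t(u), \|X_t(u)\| - r_t} = \sum_{v \in \calN_s} \sum_{u \in \calN_t,\, u \succ v} \delta_{D_t(u), \|X_t(u)\| - r_t},
\end{align*}
where $u \succ v$ means $u$ is a descendant of $v$, and the inner sums are conditionally on $\mathcal{G}_s$ independent across $v$. For a particle $u \succ v$ with $\|X_t(u)\| = r_t + O(1)$, writing $X_t(u) - X_s(v) = y\vartheta + w$ with $\vartheta = D_t(u)$ and $w \perp \vartheta$, one obtains the expansion
\begin{align*}
\|X_t(u)\| = y + \sp{X_s(v)}{\vartheta} + O(\|w\|^2 / y),
\end{align*}
so the relevant particles of each subtree are those whose projection on $\vartheta$ exceeds $r_t - \sp{X_s(v)}{\vartheta}$ by an $O(1)$ amount, with orthogonal Gaussian fluctuation of variance $\sim t - s$. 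Integrating the orthogonal Gaussian density over $\S^{d-1}$ converts the one-dimensional centering $m_{t-s}$ into the $d$-dimensional centering $r_{t-s}$: the additional $\frac{d-1}{2\sqrt{2}} \log t$ shift is the entropic cost of $\sim t^{(d-1)/2}$ asymptotically independent angular directions on the sphere of radius $r_t$.

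For each fixed $\vartheta$, the one-dimensional extremal process result of \cite{ABK2013, ABBS2013} applied to the projected subtree $(\sp{X_t(u)}{\vartheta} : u \succ v)$ gives, in the limit $t \to \infty$, a DPPP with intensity proportional to $Z_\infty^{(v)}(\vartheta) e^{-\sqrt{2} x} \dd x$ and decoration law $\mathcal{D}$. The independence across $v$, together with the identity $\sum_{v \in \calN_s} Z_\infty^{(v)}(\vartheta) = Z_\infty(\vartheta)$ for $\vartheta \in \Theta$ (which follows from the martingale property and can be promoted to a joint statement on $\S^{d-1}$ by Theorem~\ref{thm:main}), then yields after letting $s \to \infty$ and integrating against the angular measure generated above, a DPPP on $\S^{d-1} \times \R$ with intensity $c_d^\star Z_\infty(\vartheta) \sigma(\dd \vartheta) e^{-\sqrt{2} x} \dd x$ and decoration law $\mathcal{D}$, as claimed.

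The hardest step will be the joint directional control: the one-dimensional convergences of \cite{ABK2013, ABBS2013} are stated direction by direction, whereas the target is convergence of a point process on $\S^{d-1} \times \R$. This should be handled via Laplace functionals, namely by computing $\E[\exp(-\sum_u F(D_t(u), \|X_t(u)\| - r_t))]$ for $F$ continuous with compact support and exchanging the limits $t \to \infty$ and the angular integral. The exchange requires a quantitative version of Theorem~\ref{thm:main}, presumably a second-moment bound on $\langle Z_t, f\rangle$ analogous to the one used in its proof, to control the covariance between extremal displacements in different directions. A secondary difficulty is the tightness of each clan on the sphere: one must show that the descendants of a single recent common ancestor concentrate at an angular scale $o(1)$ at time $t$, so that a clan interacts with $F$ essentially through its mean direction; this should follow from the Gaussian orthogonal bound above combined with standard BBM genealogy estimates.
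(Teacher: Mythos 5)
The statement you are addressing is labelled in the paper as a \emph{conjecture}, not a theorem: the authors explicitly leave it open and give no proof. There is therefore no proof in the paper against which to compare your argument, and your proposal should be read as a research programme rather than a verification.

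As a programme, your outline is the natural one and is consistent with the heuristics the authors themselves mention (the decomposition at an early time $s$, the role of $Z_\infty(\theta)$ as the random intensity, the entropic shift $\tfrac{d-1}{2\sqrt 2}\log t$ accounting for the difference between $m_t$ and $r_t$, and the reference to \cite{SZ2015} for convergence to decorated Poisson point processes). However, it contains admitted gaps that are precisely where the difficulty lies, so it does not amount to a proof. Concretely: (i) you invoke ``the one-dimensional extremal process result applied to the projected subtree'' for each fixed direction $\vartheta$, but the limiting object lives on $\S^{d-1}\times\R$, and passing from a family of one-dimensional marginal convergences to convergence of the joint point process requires controlling correlations across directions that are $t^{-1/2+o(1)}$ apart --- a genuinely new estimate, not available from \cite{ABK2013,ABBS2013}; (ii) the geometric expansion $\|X_t(u)\| = y + \sp{X_s(v)}{\vartheta} + O(\|w\|^2/y)$ is stated with $\vartheta=D_t(u)$ but the decomposition $X_t(u)-X_s(v)=y\vartheta+w$ is then not the orthogonal decomposition of the increment relative to $\vartheta$ (the increment's direction is not $D_t(u)$), so this step needs to be set up more carefully before the ``Gaussian integral over $\S^{d-1}$'' can be carried out; (iii) the identity $\sum_{v\in\mathcal N_s}Z_\infty^{(v)}(\vartheta)=Z_\infty(\vartheta)$ needs to hold simultaneously for $\vartheta$ in a set of full measure and to be compatible with the exchange of limit and angular integral, which is exactly the kind of ``quantitative Theorem~\ref{thm:main}'' you flag as missing. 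Until these three points are supplied with actual estimates (most likely a first-/second-moment analysis of the joint Laplace functional, along with an angular tightness bound for individual clans), the conjecture remains open, as the paper states.
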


To be more explicit, the decoration point measure $\mathcal{D}$ above can be constructed  as the weak limit of $\sum_{u \in \mathcal{N}_t} \delta_{\|X_t(u)\| - R_t}$ (the extremal process of moduli seen from the largest displacement) conditioned on $R_t \geq r_t + \frac{3}{2 \sqrt{2}} \log t$ (c.f. \cite{SZ2015} for a general result of convergence towards decorated Poisson point processes). In particular, $\mathcal{D}$ only charges $(-\infty, 0]$.

Let us discuss briefly some implications that would follows from Conjecture \ref{con:extremal_process}.
Firstly, an easy Poisson point process computation would yield that 
\begin{align*}
  \lim_{t \to \infty} \P(R_t -r_t \leq x) = \E\left[ \exp\left( - c_d^\star \langle Z_\infty, 1 \rangle e^{-\sqrt 2 x} \right) \right].
\end{align*}
This is the multidimensional version of \cite{LS1987} that gives the convergence in law of the maximum of the branching Brownian motion.
Similarly, it would imply the following convergence for the law of the direction of the furthest particle at time $t$:
\[
  \lim_{s \to \infty} \lim_{t \to \infty}  \P\left( D(t) \in B \ \middle| \ \mathcal{G}_s\right)  = \frac{1}{\langle Z_\infty,1\rangle} \int_B Z_\infty(\theta) \dd \theta \quad \text{ a.s. }, B\subseteq \mathbb{S}^{d-1}.
\]

\section{Proof strategy}
\label{sec:pfStrategy}

Let us now review briefly how these results are usually proved in dimension $d=1$. The idea is to get rid of all particles that ever reach level $\sqrt{2} t + A$ at some time $t$ (this is sometimes referred to as a {\it shaving argument}). However, as we push the barrier away by letting $A\to \infty$  the probability that any particle ever hits the barrier decreases to zero. More formally, one introduces the martingale
\begin{align*}
  Z^{A}_t := \sum_{j \in \mathcal{N}^{A}_t} (\sqrt{2} t+A - X_t(j)) e^{\sqrt{2} (X_t(j) - \sqrt{2} t)},
\end{align*}
where $\mathcal{N}^{A}_t = \{ j \in \mathcal{N}_t : X_s(j) \leq \sqrt{2} s + A , s \leq t\}$. This martingale is non-negative (and uniformly integrable) and therefore converges to some $Z_\infty^A$. As in dimension one 
\begin{align*}
  \sup_{t \geq 0} \sup_{j \in \mathcal{N}_t} |X_t(j)| - \sqrt{2} t < \infty \qquad \text{ almost surely,}
\end{align*}
hence taking $A$ large enough ensures that no particle is killed with high probability. This proves that the derivative martingale converges and that almost surely $Z_\infty = \lim_{A \to \infty} Z_\infty^A$. In larger dimensions, however, one has
\begin{align*}
  \sup_{t \geq 0} \sup_{j \in \mathcal{N}_t} \|X_t(j)\| - \sqrt{2} t = \infty,
\end{align*}
and this is the moment where the standard argument breaks.

To overcome this difficulty we need to introduce a different way of removing particles that fly too high. This is done by killing particles that reach a curved boundary $\sqrt{2} t + (\phi(t) \vee A)$ at some time $t$, with $\phi$ a well-chosen non-decreasing function. In particular, if $\phi$  grows fast enough, we can ensure  that no particle will be removed with high probability by letting $A\to \infty$. The difficulty is then to find an analogue of the martingale $Z^A$ for this curved boundary.

The outline of the paper is as follows:  In Section~\ref{sec:bmKilled} we  study the standard Brownian motion killed when hitting the barrier $t \mapsto \phi(t) \vee A$. We prove in particular existence of some function $R^\phi$ allowing us to describe the Brownian motion conditioned to stay below $\phi \vee A$ as a Doob $h$-transform. Then in Section~\ref{sec:BBM_and_shaved_martingale} we prove that with high probability all particles of the multidimensional BBM do not escape a ball of an increasing radius, construct a family of martingales that we use to approximate $(Z_t(\theta))_{\theta \in \S^{d-1}}$, study their asymptotic behaviour, and prove Theorem~\ref{thm:main}. Finally, in Section~\ref{sec:1dim} we treat the one-dimensional case and look at the joint law of the leftmost and rightmost particles in the branching Brownian motion.

\section{Brownian motion conditioned to avoid \texorpdfstring{$\phi(t)$}{phi(t)}}
\label{sec:bmKilled}

To prove Theorem \ref{thm:main}, as explained in Section~\ref{sec:pfStrategy}, we will need some estimates on the one-dimensional Brownian motion conditioned to stay below a curve. In this section we gather several results on this process, using Doob's $h$-transform theory.

Let $\phi$ be a continuous function $[0,\infty) \to \R$ such that $\phi(t) = o(t^{1/2-\epsilon})$ for some $\epsilon>0$. We start by studying the Brownian motion conditioned not to hit the function $\phi$ until some finite time $t$. As the fluctuations of $B_t$, which are of order $t^{1/2}$, are much larger than $\phi(t)$, we expect that for $1 \ll s \ll t$ the process $B$ on $[0,s]$ conditioned on not hitting $\phi$ until time $t$ behaves roughly like a Bessel process (a Brownian motion conditioned not to hit 0).

More precisely, we introduce the relevant non-negative $h$-transform function $R^\phi$ in Lemma \ref{lm:R_def}. Defined as the renormalized probability of avoiding $\phi$, it makes
\begin{align*}
(R^\phi(B_t, t) \ind{\forall s <  t, B_s \leq \phi(s)})_{t \geq 0}
\end{align*}
a martingale. In other words, $R^\phi$ is a harmonic function for the Markov process $(B_t,t)$ confined to $\{(x,t) : x \leq \phi(t)\}$. The Doob $h$-transform obtained then describes a Brownian motion conditioned to stay below $\phi$; we are going to denote the corresponding measure as $\P^\phi$. It will  also be important to show that there exists $C>0$ such that $R^\phi(x,t) \approx -Cx$ as $x \to -\infty$, as this will entail the `Bessel-like' behaviour we want.

The function $R^\phi$ will then be used to define approximations of the derivative martingale of the one-dimensional branching Brownian motion. Indeed, we wish to define a uniformly integrable martingale that approximates the derivative martingale
\begin{align}
Z_t = \sum_{j \in \mathcal{N}_t} (\sqrt{2} t - X_t(j)) e^{\sqrt{2} (X_t(j) - \sqrt{2} t)},
\label{eq:derivative_martingale}
\end{align}
that would be of the form
\begin{align}
\sum_{j \in \mathcal{N}^\phi_t} H(X_t(j) - \sqrt{2}t,t)e^{\sqrt{2} (X_t(j) - \sqrt{2} t)}
\label{eq:prototype_shaved}
\end{align}
where $H$ is some function and $\mathcal{N}^{\phi}_t = \{ j \in \mathcal{N}_t : X_s(j) \leq \sqrt{2} s + \phi(s) , \ \forall s \leq t\}$ (so that the sum in \eqref{eq:prototype_shaved} is taken only over the particles that did not hit the boundary $\sqrt{2}s + \phi(s)$). 
Assuming that \eqref{eq:prototype_shaved} is a martingale is equivalent to assuming that $(H(B_t, t) \ind{\forall s <  t, B_s \leq \phi(s)})_{t \geq 0}$ is itself a martingale. Hence, setting $H(x,t) = C^{-1}R^\phi(x,t)$ gives the desired approximation of \eqref{eq:derivative_martingale}.

\medskip 

The rest of the section is organised as follows. In Lemma \ref{prop:finite_time_restriction} we characterise the measure $\P^\phi$ as a limit of conditional distributions. In Lemma \ref{lm:1dim_change} we define a new measure $\P^V$, that corresponds to a Girsanov transform adding a drift $\sqrt{2}$ applied to a process with law $\P^\phi$. That is, we can interpret $\P^V$ as a measure of a Brownian motion with a drift $\sqrt{2}t$ conditioned on never hitting $\sqrt{2}t + \phi(t)$. In Lemma \ref{lm:Bessel_type_fluctuations}  we formalize the `Bessel-like' behaviour under $\Prob^\phi$. Finally, in Lemma \ref{lm:R_asymptotics} we study asymptotics of $R^\phi(x,t)$.

\subsection{Brownian motion and non-linear barriers}
For any continuous function $\phi: [0, \infty) \to \R$ set $\tau_\phi = \inf \{u >	 0 \ : B_u \geq \phi(u) \}$. The aim of this section is to give a precise asymptotic of the quantity $\P_x( \tau_\phi > t)$ as $t \to \infty$ for $\phi$ in a certain class. We are also interested in the dependence of this probability on the shift of $\phi$, i.e. we are going to consider functions $\phi_t(u) := \phi(t+u)$. 

It is well-known that if $\phi$ grows slower than $t^{1/2}$ as $t \to \infty$, in a sense to be made precise soon, then $\tau_\phi < \infty$ a.s. and $\Prob(\tau_\phi > t)$ decays as $t^{-1/2}$. More precisely, Uchiyama proved the following upper bound.

\begin{theorem}[\cite{Uchiyama1980}, Proposition 3.1. (i)]
	Let $\phi$ be a $\mathcal{C}^1$-class increasing function such that $\phi(0) = 0$ and $\lim_{t \to \infty} \phi(t) t^{-1/2}=0$. If
	\begin{align*}
	\phi(u) - \frac{u}{t} \phi(t) \geq 0 \quad \text{ for } 0<u<t,
	\end{align*}
	then there exists a constant $C$ such that for all $x \in \R$ and $t>1$
	\begin{align}
	\P_x(\tau_\phi > t) \leq \frac{1+|x|}{t^{1/2}} \exp\left( \frac{\sqrt{2 \pi}}{4} \int_1^t \frac{\phi(u)}{u^{3/2}} \dd u + C \int_1^t \frac{\phi(u)^2}{u^2} \dd u \right).
	\label{eq:Uchiyama}
	\end{align}
	\label{thm:Uchiyama}
\end{theorem}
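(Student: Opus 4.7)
My plan is a Girsanov-type argument that converts the curved-barrier problem to a flat-barrier one. Set $W_u := B_u - \phi(u)$ and let $\Q$ be the probability with Radon--Nikodym derivative $\exp\bigl(\int_0^t \phi'(s)\,\dd B_s - \tfrac12 \int_0^t \phi'(s)^2 \dd s\bigr)$ with respect to $\P_x$. Then $W$ is a standard Brownian motion starting at $x$ under $\Q$, and the event $\{\tau_\phi > t\}$ becomes $\{\hat\tau > t\}$ with $\hat\tau := \inf\{u>0 : W_u \geq 0\}$. Inverting the density and integrating by parts (using $\phi(0)=0$) gives
\begin{align*}
\P_x(\tau_\phi > t) = \E^{\Q}_x\!\left[\exp\!\left(-\phi(t) W_t + \int_0^t \phi''(s) W_s\,\dd s - \tfrac12 \int_0^t \phi'(s)^2 \dd s\right)\ind{\hat\tau > t}\right],
\end{align*}
that is, an expectation of an explicit Girsanov correction against a Brownian motion conditioned to stay nonpositive.

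The next step is to extract the $(1+|x|)t^{-1/2}$ prefactor via the classical reflection-principle bound $\Q_x(\hat\tau > t) \leq \sqrt{2/\pi}\,(1+|x|)t^{-1/2}$, and to bound the conditional expectation of the exponential using the known description of the conditioned process: for $x \leq 0$, under $\Q_x(\,\cdot\,|\,\hat\tau > u)$ the variable $-W_u$ is Bessel-like with $\E[-W_u\,|\,\hat\tau > u] \sim 2\sqrt{2u/\pi}$. On $\{\hat\tau > t\}$ all three terms in the exponent acquire a definite sign once one uses the star-shape hypothesis $\phi(u) \geq (u/t)\phi(t)$, which is essentially a weak concavity condition forcing $\phi''$ to be nonpositive on average; one may then estimate them by iterating the Markov property along a dyadic grid $t_k = 2^k$. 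At scale $u = t_k$ each contribution is of order $\phi(u)/\sqrt{u}$, and the telescoped product of these corrections yields the integral exponent $\frac{\sqrt{2\pi}}{4}\int_1^t \phi(u) u^{-3/2}\dd u$. Quadratic remainder terms from Taylor expansion and Cauchy--Schwarz control of the cross terms are absorbed into $C\int_1^t \phi(u)^2 u^{-2}\dd u$.

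The main obstacle I anticipate is pinning down the sharp constant $\sqrt{2\pi}/4$: a one-shot application of Jensen's inequality to $\exp(-\phi(t)W_t)\ind{\hat\tau > t}$ would only yield a crude bound of the form $\exp(C\phi(t)/\sqrt{t})$, losing the path-dependent integral exponent. The integration-by-parts identity, together with the dyadic Markovian iteration, is precisely what converts the single boundary term into the $u^{-3/2}$-weighted integral, and the constant $\sqrt{2\pi}/4$ emerges from the asymptotic mean $2\sqrt{2u/\pi}$ of the conditioned modulus at each scale. Making this quantitative uniformly in $t > 1$ and $x \in \R$, rather than only in asymptotic form, is where the technical core of the argument will lie.
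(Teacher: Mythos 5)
This statement is not proved in the paper: it is cited verbatim from Uchiyama (1980), Proposition 3.1(i), and the paper merely invokes it. So your proposal is, by construction, a different route.

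Your Girsanov strategy — shift $B$ by $\phi$ so that the curved barrier becomes the flat barrier at $0$, then estimate the inverse density on $\{\hat\tau>t\}$ — is a natural plan, but as written it has real gaps. First, the integration-by-parts identity needs $\phi''$, while the hypothesis is only $\mathcal{C}^1$: for a $\mathcal{C}^1$ function whose derivative is not of bounded variation, the term $\int_0^t W_s\,\dd\phi'(s)$ is not defined, and the whole reformulation collapses. (The Girsanov step itself is fine under $\mathcal{C}^1$, but then you must work with $\int_0^t \phi'(s)\,\dd W_s$ directly, without passing to $\phi''$.) Second, the displayed formula is incorrect as stated: Itô integration by parts gives $\int_0^t\phi'(s)\,\dd W_s=\phi'(t)W_t-\phi'(0)x-\int_0^t W_s\,\phi''(s)\,\dd s$, so the boundary term should carry $\phi'(t)$, not $\phi(t)$, and the $\phi'(0)x$ contribution has gone missing; $\phi(0)=0$ does not eliminate it. Third, the sign analysis is backwards for an \emph{upper} bound: on $\{\hat\tau>t\}$ one has $W_s\le 0$, so if $\phi''\le 0$ then $-\phi'(t)W_t\ge 0$ and $\int_0^t W_s\phi''(s)\,\dd s\ge 0$, which makes the exponent \emph{large}, not small; the burden is therefore to show these positive contributions are compensated by the negative $-\tfrac12\int\phi'^2$ and by the rarity of $\{\hat\tau>t\}$, which is exactly the sharp cancellation you are trying to prove and cannot be waved off as ``definite sign.'' Fourth, the star-shape condition $\phi(u)\ge\tfrac{u}{t}\phi(t)$ is genuinely weaker than concavity and does not force $\phi''\le0$ even in an averaged sense; Uchiyama's original argument exploits it through a direct comparison with linear barriers and an iteration across time scales, not through second-derivative estimates. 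Finally, the claim that a dyadic Markov iteration and the mean $2\sqrt{2u/\pi}$ of the conditioned modulus produce precisely $\tfrac{\sqrt{2\pi}}{4}\int_1^t\phi(u)u^{-3/2}\,\dd u$ is asserted but not derived; when I try to carry out the heuristic you sketch, the constant that emerges does not obviously match $\sqrt{2\pi}/4$, and reconciling the boundary terms from repeated integrations by parts is exactly where the delicate arithmetic sits. In short: the change-of-measure idea is sensible, but under only $\mathcal{C}^1$ regularity you should work directly with $\int_0^t\phi'(s)\,\dd W_s$ under the conditioned law, and the precise constant requires a much more explicit computation than the sketch provides.
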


Novikov \cite{Novikov1997} obtained a precise asymptotic of $\P(\tau_\phi > t)$ as $t \to \infty$, expressed as a function of the law of $B_{\tau_\phi}$.

\begin{theorem}[\cite{Novikov1997}, Theorem 2]
	If $\phi$ is a continuous non-decreasing function such that $\int_1^\infty \phi(t)t^{-3/2}dt < \infty$ and  $\phi(0) > 0$, then
	\begin{align*}
	\lim_{t\to \infty} \sqrt{t} \Prob(\tau_\phi >t) = \sqrt{\frac{2}{\pi}} \E B_{\tau_\phi} < \infty.
	\end{align*}
	\label{thm:Novikov_existence}
\end{theorem}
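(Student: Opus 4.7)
The plan is to combine an optional-stopping identity for the martingale $B$ with the limiting conditional law of $B_t$ given $\{\tau_\phi > t\}$. Since $(B_{s \wedge \tau_\phi})_{s \geq 0}$ is a martingale and $|B_{t \wedge \tau_\phi}| \leq |B_t| + \phi(t)$ is integrable, optional stopping at the deterministic time $t$ gives
\begin{equation*}
  0 = \E[B_{t \wedge \tau_\phi}] = \E\bigl[B_{\tau_\phi} \ind{\tau_\phi \leq t}\bigr] + \E\bigl[B_t \ind{\tau_\phi > t}\bigr].
\end{equation*}
Since $\phi$ grows slower than $\sqrt{2 t \log \log t}$ we have $\tau_\phi < \infty$ almost surely, so $B_{\tau_\phi} = \phi(\tau_\phi)>0$. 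If one establishes (i) $\E B_{\tau_\phi} < \infty$, so that the first term converges to $\E B_{\tau_\phi}$ by monotone convergence, and (ii) $\E[B_t \mid \tau_\phi > t]/\sqrt{t} \to -\sqrt{\pi/2}$ as $t \to \infty$, then rearranging yields $\sqrt{t}\, \P(\tau_\phi > t) \to \E B_{\tau_\phi}/\sqrt{\pi/2} = \sqrt{2/\pi}\, \E B_{\tau_\phi}$, as required.

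For (i), I would use Theorem~\ref{thm:Uchiyama}. A short argument (comparing the integral on $[u,2u]$ with $\phi(u) u^{-1/2}$) shows that the hypothesis $\int_1^\infty \phi(u) u^{-3/2}\, du < \infty$ with $\phi$ non-decreasing forces $\phi(u)/\sqrt{u}$ to be bounded, so both exponents in \eqref{eq:Uchiyama} stay finite and $\P(\tau_\phi > t) \leq C t^{-1/2}$. Then, writing
\begin{equation*}
  \E \phi(\tau_\phi) = \phi(0) + \int_0^\infty \P(\tau_\phi > u)\, d\phi(u)
\end{equation*}
and integrating by parts reduces the right-hand side to a constant multiple of $\int_1^\infty \phi(u) u^{-3/2}\, du$, finite by hypothesis.

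The main task is (ii). The heuristic is that on $\{\tau_\phi > t\}$ with $t$ large, the Brownian motion is constrained by a sub-$\sqrt{t}$ barrier only, so on the macroscopic scale it behaves like a Brownian motion conditioned to stay negative, whose rescaled endpoint has density $|y|e^{-y^2/2}\ind{y<0}\, dy$ of mean $-\sqrt{\pi/2}$. To make this rigorous I would fix a large $T \ll t$ and apply the strong Markov property at $T$ to decompose
\begin{equation*}
  \E\bigl[B_t \ind{\tau_\phi > t}\bigr] = \E\Bigl[\ind{\tau_\phi > T}\, F(B_T, t-T)\Bigr], \qquad F(x,s) := \E_x\bigl[B_s \ind{\tau_{\phi_T} > s}\bigr],
\end{equation*}
where $\phi_T(u) := \phi(T+u)$. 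On the time scale relevant to the inner expectation the shifted barrier $\phi_T$ is nearly flat (because $\phi_T(u) - \phi(T)$ is controlled by the tail of the integrability hypothesis), so $F(x,s)$ is computed asymptotically via the reflection-principle formula for a constant barrier, which delivers the Rayleigh limit. The main obstacle is uniformising the approximation: one must bound the correction coming from the slow growth of $\phi_T$ using Theorem~\ref{thm:Uchiyama} once more, and the assumption $\int_1^\infty \phi(u) u^{-3/2}\, du < \infty$ is needed in its full strength so that this error vanishes as $T \to \infty$ uniformly in $t$.
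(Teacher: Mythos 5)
This statement is Theorem~2 of Novikov~\cite{Novikov1997}; the paper cites it as a black box and gives no proof, so I can only assess your argument on its own terms. Your top-level plan is the right one: optional stopping of $B$ at $t\wedge\tau_\phi$ gives $\E[B_{\tau_\phi}\ind{\tau_\phi\le t}]=-\E[B_t\ind{\tau_\phi>t}]$, reducing the theorem to (i) $\E B_{\tau_\phi}<\infty$ and (ii) $\E[B_t\mid\tau_\phi>t]/\sqrt t\to-\sqrt{\pi/2}$. Your treatment of (i) is essentially correct: the elementary observation that $\phi$ non-decreasing and $\int_1^\infty\phi(u)u^{-3/2}\,du<\infty$ forces $\phi(u)/\sqrt u\to0$, followed by integration by parts. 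One caveat: Theorem~\ref{thm:Uchiyama} is stated for $\phi$ that is $\mathcal C^1$, concave, with $\phi(0)=0$, whereas the statement you are proving assumes only that $\phi$ is continuous, non-decreasing, $\phi(0)>0$. To invoke Uchiyama's bound for a general $\phi$ you would first have to dominate it by a function of the required form (this is possible using $\phi(u)/\sqrt u\to0$, but needs to be said, and in fact the proof in \cite{Novikov1997} does not rely on such a bound).

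The genuine gap is sub-claim (ii), which you flag as ``the main task'' and then only sketch. That sketch is not yet a proof, and worse, it is close to being circular. You decompose at a large time $T$ and reduce to estimating $F(x,s)=\E_x[B_s\ind{\tau_{\phi_T}>s}]$ for the shifted barrier $\phi_T$, arguing it is ``nearly flat.'' But the large-$s$ asymptotics of $F(x,s)$ for a slowly growing barrier is exactly a rephrasing of the theorem being proved (restarted at time $T$). A non-circular argument would need either a careful two-sided comparison with a \emph{constant} barrier, where the reflection principle gives the exact density and the Rayleigh limit, together with an error control that is uniform in $t>T$ as $T\to\infty$; or a different mechanism altogether (e.g.\ a Laplace-transform identity and a Tauberian theorem). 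One also needs more than the weak convergence of the conditional law of $-B_t/\sqrt t$ to Rayleigh: to pass to the conditional mean one must establish uniform integrability of $-B_t/\sqrt t$ under the law conditioned on $\{\tau_\phi>t\}$, which is a separate estimate. As written, the proposal gets the skeleton right but leaves the load-bearing step unproven.
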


We apply these two theorems to define and give the first property of the aforementioned function $R^\phi$, which will be a key object of interest in the rest of the article. We will restrict ourselves to functions $\phi$ satisfying the following assumptions:
\begin{multline}
  \label{eqn:hypPhi}
  \tag{H}
  \phi \text{ increasing, concave, } \mathcal{C}^1\text{-class with $\phi(0) > 0$,}\\
  \text{ and there exists $\alpha \in (0,1/2)$ such that } \lim_{t \to \infty} \frac{\phi(t)}{t^\alpha} = 0,
\end{multline}
that we refer to as assumption \eqref{eqn:hypPhi}.

\begin{lemma}
Let $\phi$ be a function satisfying \eqref{eqn:hypPhi}. Then the following limit exists for all $t \geq 0$ and $x \in \R$:
	\begin{align}
	R^\phi(x,t) := \sqrt{\frac{\pi}{2}}\lim_{s \to \infty} \sqrt{s} \P_{x}(\tau_{\phi_t} > s).
	\label{eq:R_def}
	\end{align}
	Moreover, there exists $C > 0$ such that for all $t \geq 0$ and $x \leq \phi(t)$,
	\begin{equation*}
	R^\phi(x,t) \leq C (1 + (\phi(t)-x)).
	\end{equation*}
	Finally, $\left(R^\phi(B_t, t) \ind{\tau_\phi > t}\right)_{t \geq 0}$ is a martingale.
	\label{lm:R_def}
\end{lemma}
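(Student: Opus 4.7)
My plan is to derive each of the three claims from one of the two quantitative estimates cited above: existence of the limit from Novikov's theorem (Theorem~\ref{thm:Novikov_existence}), the upper bound from Uchiyama's inequality (Theorem~\ref{thm:Uchiyama}), and the martingale property from the Markov property combined with a dominated convergence argument using that bound.

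For existence, I would first dispose of the case $x \geq \phi(t)$, in which $\tau_{\phi_t} = 0$ under $\Prob_x$ and the limit is trivially $0$. For $x < \phi(t)$, setting $\tilde B_u = B_u - x$ and $\tilde\phi(u) = \phi_t(u) - x$ converts the problem into the asymptotics of $\Prob_0(\tau_{\tilde\phi} > s)$. The shifted barrier $\tilde\phi$ inherits from \eqref{eqn:hypPhi} continuity and monotonicity, satisfies $\tilde\phi(0) = \phi(t) - x > 0$, and has $\int_1^\infty \tilde\phi(u) u^{-3/2} \dd u < \infty$ thanks to $\phi(u) = o(u^\alpha)$ with $\alpha < 1/2$, so Theorem~\ref{thm:Novikov_existence} applies and produces the limit in \eqref{eq:R_def}.

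For the upper bound I would translate instead by $\phi(t)$: the function $\tilde\phi(u) := \phi_t(u) - \phi(t)$ satisfies $\tilde\phi(0) = 0$ and, by concavity of $\phi$, $\tilde\phi(u) - (u/s)\tilde\phi(s) \geq 0$ for $0 < u < s$, so Theorem~\ref{thm:Uchiyama} applies to yield
\begin{align*}
\Prob_x(\tau_{\phi_t} > s) \leq \frac{1 + \phi(t) - x}{s^{1/2}} \exp\left( \frac{\sqrt{2\pi}}{4}\int_1^s \frac{\tilde\phi(u)}{u^{3/2}} \dd u + C \int_1^s \frac{\tilde\phi(u)^2}{u^2} \dd u \right).
\end{align*}
The key point is that the exponential factor is bounded uniformly in $t$: concavity together with $\phi(0) > 0$ gives $\tilde\phi(u) = \phi(t+u) - \phi(t) \leq \phi(u) - \phi(0) \leq \phi(u)$, and then the growth condition makes both $\int_1^\infty \phi(u) u^{-3/2} \dd u$ and $\int_1^\infty \phi(u)^2 u^{-2} \dd u$ converge. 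Multiplying by $\sqrt{s}$ and letting $s \to \infty$ yields $R^\phi(x,t) \leq C(1 + \phi(t) - x)$.

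For the martingale property, let $(\mathcal{F}_t)$ denote the natural filtration of $B$. By the Markov property at time $t$, the identity to prove reduces to the deterministic statement
\begin{align*}
R^\phi(y, t) = \E_y\!\left[R^\phi(B_{s-t}, s)\, \ind{\tau_{\phi_t} > s-t}\right], \quad y \in \R,\ s > t.
\end{align*}
This in turn follows from the Markov property applied at time $s-t$ inside the definition of $R^\phi(y,t)$: noting $\phi_t(\cdot + (s-t)) = \phi_s$, one has
\begin{align*}
\sqrt{r}\, \Prob_y(\tau_{\phi_t} > r) = \E_y\!\left[\ind{\tau_{\phi_t} > s-t}\, \sqrt{r}\, \Prob_{B_{s-t}}(\tau_{\phi_s} > r - (s-t))\right].
\end{align*}
The integrand converges pointwise as $r \to \infty$ to $\sqrt{2/\pi}\, \ind{\tau_{\phi_t} > s-t}\, R^\phi(B_{s-t}, s)$, and is dominated, via the uniform bound from the previous step, by a constant times $\ind{\tau_{\phi_t} > s-t}(1 + \phi(s) - B_{s-t})$, which is integrable under $\Prob_y$ since $B_{s-t}$ is Gaussian. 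Dominated convergence closes the argument. The main technical obstacle throughout is obtaining the upper bound with constants uniform in the shift $t$, as this uniformity is exactly what both yields the stated inequality and provides the dominator needed in the martingale step.
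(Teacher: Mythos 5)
Your proof is correct and matches the paper's argument step for step: Novikov's theorem for existence, Uchiyama's bound combined with concavity for the uniform linear estimate, and the Markov property with dominated convergence (dominated precisely by that uniform estimate) for the martingale identity. The only cosmetic difference is in the upper bound, where you apply Uchiyama directly to the $t$-shifted barrier $\phi_t - \phi(t)$ and bound the resulting integrals uniformly via $\phi_t(u) - \phi(t) \leq \phi(u) - \phi(0)$, whereas the paper first uses this same concavity inequality to reduce to the single $t$-independent barrier $\phi - \phi(0)$ before invoking Uchiyama; these are equivalent.
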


The idea of using the renormalized survival probability to define an $h$-transform is classical. Here we draw inspiration from \cite{BD1994}  (in which the law of a random walk conditioned to stay positive was constructed). There, as in the present work, we condition a random process not to hit some region (in our case the process of interest is $(B_t,t)_{t \geq 0}$ and the region to avoid is defined by $J = \{(x,t) : x \geq \phi(t)\}$).

In this setting the probability that the process $(B_t,t)_{t \geq 0}$ never hits $J$ is equal to $0$, irrespectively of its starting position $(x_0,t_0)$. As a result, to define the $h$-transform allowing the definition, in the sense of Doob, of the Brownian motion conditioned never to hit $J$, it is reasonable to renormalise the probability not to hit the region $J$ for $t$ units of time by $t^{1/2}$ so that the limit, that we denote by $R^\phi(x_0,t_0)$, is non-degenerate. It remains to check that the function $R^\phi$ which we defined is indeed a harmonic function for $(B_t,t)$ on the domain $J^c$, i.e. that $\left(R^\phi(B_t, t) \ind{\tau_\phi > t}\right)_{t \geq 0}$ is a martingale.

\begin{proof}
	The assumptions on $\phi$ guarantee that Theorem \ref{thm:Uchiyama} and Theorem \ref{thm:Novikov_existence} can be applied to the function $\phi_t$ for all $t \geq 0$. We note that for all $t \geq 0$, $s \geq 0$ and $x \geq \phi(t)$, we have
	\begin{align*}
	\P_{x}(\tau_{\phi_t} > s) = 0.
	\end{align*}	
	Applying Theorem \ref{thm:Novikov_existence} to the function $\phi_t$, we deduce that for all $x,t$ such that $x < \phi(t)$,
	\begin{align*}
	R^\phi(x,t) = \E\left( B_{\tau_{\phi_t-x}} \right) \ \in (0, \infty),
	\end{align*}
	which proves that $R^\phi$ is well-defined and finite. Additionally, using that $\phi$ is concave, and hence that $\phi(t+u) - \phi(t) \leq \phi(u) - \phi(0)$, we observe that for all $x \in \R$, $t \geq 0$ and $s > 0$, we have
	\begin{align*}
	&\sqrt{s} \P_{x}(\tau_{\phi_t} > s)
	= \sqrt{s} \P_{x-\phi(t)} \left(\tau_{\phi_t-\phi(t)} > s \right)
	\leq \sqrt{s} \P_{x - \phi(t)} \left( \tau_{\phi - \phi(0)} > s \right).
	\end{align*}
	Using Theorem \ref{thm:Uchiyama}, and observing that the exponential term in bound \eqref{eq:Uchiyama} is increasing in $t$, and hence may be bounded from above by its limit  as $t \to \infty$, we obtain for $x \leq \phi(t)$ and $s \geq 0$,
	\begin{align}
	\sqrt{s} \P_{x - \phi(t)} \left( \tau_{\phi - \phi(0)} > s \right) \leq {} C (1 + (\phi(t)-x)),
	\label{eqn:boundForR}
	\end{align}
	where $C> 0$ is a constant that does not depend on $x,t,s$.
	
	Thanks to this bound, we can now prove that $(R^\phi(B_t, t) \ind{\tau_\phi > t}, t \geq 0)$ is a martingale using the dominated convergence theorem. Indeed, using the Markov property, note that it is enough to prove that for all $t,s \geq 0$ and $x \leq \phi(t)$,
	\[
	  R^\phi(x,t) = \E_{x} \left[ R^\phi(B_s,t+s) \ind{\tau_{\phi_t} > s} \right].
	\]
	Observe that by the Markov property of the Brownian motion, for all $r \geq 0$
	\begin{align}
	\P_{x}(\tau_{\phi_t} > s + r)
	= \E_{x}\left[\ind{\tau_{\phi_t} > s} \P_{B_s}(\tau_{\phi_{t+s}} > r)\right].
	\label{eq:R_martingale}
	\end{align} 
	By definition, we have $\sqrt{r \pi/2}\P_{x}(\tau_{\phi_t} > s + r) \to R^\phi(x,t)$ as $r \to \infty$, and similarly, we have
	\begin{align*}
	\lim_{r \to \infty} \sqrt{r\pi/2} \P_{B_s}(\tau_{\phi_{t+s}} > r) = R^\phi(B_s,t+s) \text{ a.s.}
	\end{align*}
	We now observe that by \eqref{eqn:boundForR} we can bound $\sqrt{r}\P_{B_s}(\tau_{\phi_{t+s}} > r)$ uniformly in $r \geq 0$ by $C(1 + |B_s| + |\phi(t+s)|)$. This quantity being integrable, letting $r \to \infty$, and applying Lebesgue's dominated convergence theorem in \eqref{eq:R_martingale} we get
	\begin{align*}
	R^\phi(x,t) &=
	\E_{x}\left[ R^\phi(B_s,t+s) \ind{\tau_{\phi_t} > s}\right],
	\end{align*}
	which completes the proof.
\end{proof}

As mentioned above, the function $R^\phi$ can be used to construct the Brownian motion conditioned to stay below $\phi$ in the sense of Doob, as a process with law $\P^\phi$ defined by
\begin{align*}
\left. \frac{\dd\P^\phi}{\dd\P}\right|_{\mathcal{F}_t} := \frac{R^\phi(B_t, t)}{R^\phi(0,0)} \ind{\tau_\phi > t},
\end{align*}
using the fact that $R^\phi(B_t, t) \ind {\tau_\phi > t}$ is a non-negative martingale with mean $R^\phi(0,0)$. Law $\P^\phi$ corresponds to the limit of the law of the Brownian motion on the time interval $[0,t]$ conditioned on $\tau_\phi > s$ when $s \to \infty$. More precisely, it can be characterized in the following way.
\begin{proposition}
	Assume that $\phi$ satisfies \eqref{eqn:hypPhi}. For any $t>0$ and $A \in \mathcal{F}_t$, 
	\begin{align*}
	\P^\phi (A)  = \lim_{s \to \infty} \P(A \ | \ \tau_\phi>s).
	\end{align*}
	\label{prop:finite_time_restriction}
\end{proposition}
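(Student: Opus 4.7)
The plan is to write $\P(A \mid \tau_\phi > s)$ explicitly as a ratio, apply the Markov property at time $t$, and then let $s\to\infty$ on numerator and denominator separately, recognising both limits via the definition \eqref{eq:R_def} of $R^\phi$.

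More precisely, first I would fix $t > 0$ and $A \in \mathcal{F}_t$, and for $s > t$ use the Markov property of the Brownian motion at time $t$ to write
\[
\P(A \cap \{\tau_\phi > s\}) = \E\!\left[\ind{A}\ind{\tau_\phi > t}\,\P_{B_t}\!\left(\tau_{\phi_t} > s-t\right)\right].
\]
Dividing by $\P(\tau_\phi > s)$ and multiplying numerator and denominator by $\sqrt{\pi s/2}$ gives
\[
\P(A\mid \tau_\phi > s)=\frac{\E\!\left[\ind{A}\ind{\tau_\phi > t}\,\sqrt{\pi s/2}\,\P_{B_t}(\tau_{\phi_t}>s-t)\right]}{\sqrt{\pi s/2}\,\P(\tau_\phi>s)}.
\]

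Next, the denominator converges to $R^\phi(0,0)$ by definition \eqref{eq:R_def} (which is strictly positive since $\phi(0)>0$). For the numerator, observe that on the event $\{\tau_\phi > t\}$ one has $B_t < \phi(t)$ almost surely, and
\[
\sqrt{\pi s/2}\,\P_{B_t}(\tau_{\phi_t}>s-t)=\sqrt{s/(s-t)}\cdot\sqrt{\pi(s-t)/2}\,\P_{B_t}(\tau_{\phi_t}>s-t),
\]
so pointwise the integrand converges a.s.\ to $R^\phi(B_t,t)\ind{A}\ind{\tau_\phi>t}$ as $s\to\infty$, again by \eqref{eq:R_def} applied to the random starting point $B_t$.

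To pass the limit through the expectation I would invoke dominated convergence, using the uniform bound \eqref{eqn:boundForR} from the proof of Lemma~\ref{lm:R_def}: for all $s\geq t+1$,
\[
\sqrt{\pi s/2}\,\P_{B_t}(\tau_{\phi_t}>s-t)\ind{\tau_\phi>t}\leq C\bigl(1+\phi(t)+|B_t|\bigr),
\]
which is integrable under $\P$ since $B_t$ is Gaussian and $\phi(t)$ is deterministic. Combining the limits of numerator and denominator yields
\[
\lim_{s\to\infty}\P(A\mid\tau_\phi>s)=\frac{\E\!\left[\ind{A}\ind{\tau_\phi>t}\,R^\phi(B_t,t)\right]}{R^\phi(0,0)}=\P^\phi(A),
\]
the last equality being the definition of the Doob $h$-transform. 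The only delicate point is verifying the integrable majorant, but this is handed to us directly by \eqref{eqn:boundForR} with a constant uniform in $s$ and $t$; everything else is bookkeeping with the Markov property and the definition of $R^\phi$.
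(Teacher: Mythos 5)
Your proof is correct and follows essentially the same route as the paper's: Markov property at time $t$, renormalisation of numerator and denominator by $\sqrt{\pi s/2}$, pointwise convergence to $R^\phi$ via \eqref{eq:R_def}, and dominated convergence via \eqref{eqn:boundForR}. You even spell out the minor $\sqrt{s/(s-t)}$ correction that the paper leaves implicit, so nothing is missing.
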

The proof of Proposition \ref{prop:finite_time_restriction} is inspired by ideas from the proof of Theorem~1 in \cite{BD1994}.
\begin{proof}
	Let $A \in \mathcal{F}_t$. We observe that
	\[
	  \P(A \ | \ \tau_\phi > s) = \frac{\P(A,\tau_\phi > s)}{\P(\tau_\phi > s)} = \frac{\E(\I_A \ind{\tau_\phi > t} \P_{B_t}(\tau_{\phi_t}>s-t))}{\P(\tau_\phi>s)}.
	\]
	Then by \eqref{eq:R_def}, we have that $\lim_{s \to \infty} \sqrt{s \frac{\pi}{2}} \P(\tau_\phi>s) = R^\phi(0,0)$ and
	\[
	  \lim_{s \to \infty} \sqrt{s \frac{\pi}{2}}\P_{B_t}(\tau_{\phi_t}>s-t)) = R^\phi(B_t,t) \quad \text{a.s.}
	\]
	Moreover, using \eqref{eqn:boundForR}, we can apply Lebesgue's dominated convergence theorem to obtain
	\[
	  \lim_{s \to \infty} \sqrt{s \frac{\pi}{2}}\E(\I_A \ind{\tau_\phi > t} \P_{B_t}(\tau_{\phi_t}>s-t)) = \E(\I_A \ind{\tau_\phi>t} R^\phi(B_t,t)).
	\]
	As a result, we have
	\[
	  \lim_{s \to \infty}  \P(A \ | \ \tau_\phi > s) = \frac{1}{R^\phi(0,0)}\E(\I_A \ind{\tau_\phi>t} R^\phi(B_t,t)) = \P^\phi(A),
	\]
	by definition.
\end{proof}

To complete the section, note that one can make a Girsanov-type change of measure to give the Brownian motion we consider a linear drift. This additional change of measure will be used when working with a multidimensional BBM. In particular, in Lemma \ref{lm:spinaldecomposition} we describe a decomposition of the size-biased law of the BBM with a spine particle that behaves similarly to a Brownian motion with drift $\sqrt{2}$ conditioned not to hit $\sqrt{2} t + \phi(t)$ for all $t \geq 0$.

More precisely, we introduce the hitting time
\begin{align*}
\tilde \tau_\phi :=  \inf \{u > 0 \ : B_u \geq \sqrt{2}u + \phi(u) \}
\end{align*}
and the process
\begin{align*}
V_t := \frac{R^\phi(B_t-\sqrt{2}t,t)}{R^\phi(0,0)} \ind{\tilde \tau_\phi > t} e^{\sqrt{2}B_t - t}.
\end{align*}
The following result then holds.

\begin{lemma}
Assuming that $\phi$ satisfy \eqref{eqn:hypPhi}, $(V_t, t \geq 0)$ is a mean one martingale. Defining $\P^V$ by $\frac{\rm{d} \P^V}{\rm{d} \P}\big|_{\mathcal{F}_t} := V_t$, $\P^V$ is a probability measure corresponding to the law of a Brownian motion with drift $\sqrt{2}$ conditioned to stay below $\sqrt{2}t + \phi(t)$ at all times $t \geq 0$ (in the sense of Proposition \ref{prop:finite_time_restriction}).
\label{lm:1dim_change}
\end{lemma}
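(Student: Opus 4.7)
The plan is to recognise $V_t$ as the product of the classical Girsanov exponential $M_t := e^{\sqrt{2} B_t - t}$, which implements the drift, and an $h$-transform martingale of the form produced by Lemma \ref{lm:R_def}, which implements the conditioning. Concretely, I would introduce the auxiliary probability measure $\tilde\P$ defined on each $\mathcal{F}_t$ by $\dd\tilde\P/\dd\P = M_t$. By Girsanov's theorem, under $\tilde\P$ the process $\tilde B_t := B_t - \sqrt{2} t$ is a standard Brownian motion, and the crossing time $\tilde\tau_\phi$ coincides with $\inf\{u > 0 : \tilde B_u \geq \phi(u)\}$, the crossing time of $\phi$ by $\tilde B$.

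I would then apply Lemma \ref{lm:R_def} to the standard Brownian motion $\tilde B$ viewed under $\tilde\P$, which yields that
\[
N_t := \frac{R^\phi(\tilde B_t, t)}{R^\phi(0,0)} \ind{\tilde\tau_\phi > t}
\]
is a mean-one $\tilde\P$-martingale. Since $V_t = N_t M_t$, for any $s \leq t$ and any $A \in \mathcal{F}_s$, stacking the two changes of measure gives
\[
\E[V_t \I_A] = \tilde\E[N_t \I_A] = \tilde\E[N_s \I_A] = \E[V_s \I_A],
\]
so $V_t$ is a $\P$-martingale; taking $s = 0$ and $A = \Omega$ and noting that $V_0 = \ind{\tilde\tau_\phi > 0} = 1$ almost surely (because $\phi(0) > 0$ and $B$ is continuous) yields $\E[V_t] = 1$. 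Thus the prescription $\dd\P^V/\dd\P|_{\mathcal{F}_t} = V_t$ defines a genuine probability measure.

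For the characterisation of $\P^V$, I fix $t > 0$ and $A \in \mathcal{F}_t$, rewrite
\[
\P^V(A) = \tilde\E\left[\frac{R^\phi(\tilde B_t,t)}{R^\phi(0,0)} \ind{\tilde\tau_\phi > t} \I_A\right],
\]
and recognise the right-hand side as the evaluation on $A$ of the measure $\P^\phi$ built from the standard Brownian motion $\tilde B$ under $\tilde\P$. Proposition \ref{prop:finite_time_restriction}, whose statement depends only on the driving process being a standard Brownian motion, then gives $\P^V(A) = \lim_{s \to \infty} \tilde\P(A \mid \tilde\tau_\phi > s)$. Under $\tilde\P$, the canonical process $B = \tilde B + \sqrt{2}\,\cdot\,$ is a Brownian motion with drift $\sqrt{2}$ and $\{\tilde\tau_\phi > s\}$ is exactly the event that $B$ stays below $\sqrt{2} u + \phi(u)$ on $[0,s]$, which is the claimed interpretation. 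I do not anticipate any genuine obstacle: the argument is a routine combination of Girsanov's theorem with Lemma \ref{lm:R_def} and Proposition \ref{prop:finite_time_restriction}; the only delicate point is the bookkeeping that ensures Proposition \ref{prop:finite_time_restriction} is applied to $\tilde B$ under $\tilde\P$ rather than to $B$ under $\P$.
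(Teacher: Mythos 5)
Your proposal is correct and follows essentially the same two-layer change-of-measure argument as the paper: factor $V_t$ into the Girsanov exponential $e^{\sqrt{2}B_t - t}$ (which removes the drift under $\tilde\P$) times the $h$-transform martingale from Lemma~\ref{lm:R_def} applied to $B_t - \sqrt{2}t$, then invoke Proposition~\ref{prop:finite_time_restriction} under $\tilde\P$ for the conditioning interpretation. The only cosmetic difference is that you verify mean one via $V_0 = 1$ while the paper reads it off from $R^\phi(0,0)$ being the initial value of the $\tilde\P$-martingale; the substance is identical.
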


\begin{proof}
Set $Y_t := e^{\sqrt{2}B_t - t}$. It is then well-known that $Y$ is a $\P$-martingale and that the law $\tilde \P = Y \cdot \P$ corresponds to the law of a Brownian motion with drift $\sqrt{2}$, by  Girsanov's theorem. Observe that
\begin{align*}
\left.\frac{\dd \P^V}{\dd \P}\right|_{\mathcal{F}_t} = \frac{R^\phi(B_t - \sqrt{2}t ,t) \ind{\tau_\phi > t}}{R^\phi(0,0)}e^{\sqrt{2}B_t - t}
= \frac{R^\phi(B_t - \sqrt{2}t ,t) \ind{\tilde \tau_\phi > t}}{R^\phi(0,0)} \left. \frac{\dd \tilde{\P}}{\dd \P}\right|_{\mathcal{F}_t}.
\end{align*}

Using that under $\tilde{\P}$, $(B_t-\sqrt{2} t, t \geq 0)$ is a Brownian motion, we obtain immediately from Lemma \ref{lm:R_def} that $(R^\phi(B_t - \sqrt{2}t,t) \ind{\tilde \tau_\phi > t}, t \geq 0)$ is a non-negative $\tilde{\P}$-martingale, and therefore that $V$ is a $\P$-martingale.

Additionally, we have that
\[
  \left.\frac{\dd \P^V}{\dd \tilde{\P}}\right|_{\mathcal{F}_t} = \frac{R^\phi(B_t - \sqrt{2}t,t) \ind{\tau_\phi > t}}{R^\phi(0,0)},
\]
hence by Proposition \ref{prop:finite_time_restriction} we have that under $\P^V$, $(B_t - \sqrt{2} t, t \geq 0)$ is a Brownian motion conditioned on not hitting the curve $\phi$, which completes the proof.
\end{proof}

\subsection{Behaviour of the conditioned process}

We describe here the behaviour of the process $B$ under the law $\P^\phi$. We prove that for the Brownian motion conditioned to stay below $\phi$, the process localizes at time $t$ at position $-t^{1/2 + o(1)}$. In other words, for any $\epsilon \in (0, 1/2)$, for all $t$ large enough one has $t^{1/2-\epsilon} < -B_t < t^{1/2 + \epsilon}$ $\P^\phi$-a.s. This result is similar to what happens with the Bessel process, i.e. as the Brownian motion typically has $\sqrt{t}$ fluctuation, conditioning it to stay below $0$ or $o(t^{1/2-\epsilon})$ does not make a difference, asymptotically.                                                                            \begin{lemma}
  Let $\phi$ be a function satisfying \eqref{eqn:hypPhi}. We have
  \[
    \lim_{t \to \infty} \frac{\log (-B_t)}{\log t} = \frac{1}{2} \quad \P^\phi-\text{a.s.},
  \]
  i.e. $B_t = -t^{1/2+o(1)}$ as $t \to \infty$, $\P^\phi$-a.s.
	\label{lm:Bessel_type_fluctuations}
\end{lemma}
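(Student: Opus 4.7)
The plan is to prove, for every $\epsilon \in (0,1/2)$, that $\P^\phi$-almost surely one eventually has both $-B_t \le t^{1/2+\epsilon}$ and $-B_t \ge t^{1/2-\epsilon}$; a countable intersection over $\epsilon = 1/k$ then gives the claim. In each direction the recipe is Borel--Cantelli at a discrete sequence $(t_n)$ supplemented by an oscillation control over $[t_n,t_{n+1}]$. Since $\P^\phi \ll \P$ on $\mathcal{F}_T$ with density bounded by $C(1+\phi(T)+|B_T|)/R^\phi(0,0)$ by Lemma~\ref{lm:R_def}, Cauchy--Schwarz transfers Gaussian estimates from $\P$ to $\P^\phi$ at an $O(\sqrt{T})$ multiplicative cost, which is absorbed by the super-polynomial decay of all subsequent Gaussian bounds.

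For the upper bound on $-B_t$, the $h$-transform definition of $\P^\phi$ and the linear bound on $R^\phi$ give
\begin{equation*}
  \P^\phi(B_t < -t^{1/2+\epsilon}) \le \frac{C}{R^\phi(0,0)}\,\E\bigl[(1+\phi(t)+|B_t|)\,\ind{|B_t|>t^{1/2+\epsilon}}\bigr],
\end{equation*}
which is $O(\sqrt{t}\,e^{-t^{2\epsilon}/2})$ by the explicit Gaussian tail formula. Borel--Cantelli at $t = n \in \N$ followed by a standard oscillation control on $[n,n+1]$ yields the claim for real $t$.

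For the lower bound on $-B_t$, the same representation gives
\begin{equation*}
  \P^\phi\bigl(B_t \in (-t^{1/2-\epsilon},\phi(t)]\bigr) = \frac{1}{R^\phi(0,0)}\int_{-t^{1/2-\epsilon}}^{\phi(t)} R^\phi(y,t)\,\P(\tau_\phi>t,\,B_t\in\dd y),
\end{equation*}
and Lemma~\ref{lm:R_def} supplies $R^\phi(y,t) \le C\,t^{1/2-\epsilon}$ on this range, for $\epsilon < 1/2 - \alpha$ (the only relevant regime, as smaller $\epsilon$ forces a stronger claim). The key technical input is a ballot-type estimate
\begin{equation*}
  \P(\tau_\phi>t,\,B_t\in\dd y) \le \frac{C(1+\phi(0))(1+\phi(t)-y)}{t^{3/2}}\,\dd y, \qquad y \le \phi(t),
\end{equation*}
which, integrated over $y \in [-t^{1/2-\epsilon},\phi(t)]$, gives a bound of order $t^{-1/2-2\epsilon}$, so that $\P^\phi(B_t\in(-t^{1/2-\epsilon},\phi(t)]) \le C\,t^{-3\epsilon}$. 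Choosing a polynomial subsequence $t_n = n^p$ with $p \in (1/(3\epsilon),\,1/(2\epsilon))$---a non-empty interval for every $\epsilon > 0$---makes this summable; Borel--Cantelli yields $-B_{t_n} \ge t_n^{1/2-\epsilon}$ eventually. The Gaussian oscillation bound $\P(\sup_{t \in [t_n,t_{n+1}]}|B_t-B_{t_n}| > t_n^{1/2-\epsilon}) \le 4e^{-cn^{1-2\epsilon p}}$ is summable for $p < 1/(2\epsilon)$ and transfers to $\P^\phi$ via the Cauchy--Schwarz step above; combined with a slight decrease $\epsilon''<\epsilon$ of the exponent in the pointwise bound to create room for the fluctuations, this extends the conclusion from the subsequence to every real $t$.

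The main obstacle is establishing the ballot-type estimate. The naive reflection-principle argument, which majorises $\phi$ by the constant barrier $\phi(t)$ using monotonicity, produces an extra factor of $\phi(t)$ in place of $\phi(0)$; when $\phi$ grows polynomially this spoils the $t^{-3\epsilon}$ decay and leaves the resulting bound non-summable for small $\epsilon$. Obtaining the correct $\phi(0)$-dependence requires a bridge-type decomposition: one splits $[0,t]$ at its midpoint via the Markov property, applies Uchiyama's bound (Theorem~\ref{thm:Uchiyama}) on the forward half---whose exponential prefactor is uniformly bounded under \eqref{eqn:hypPhi}---and combines it with a direct Gaussian density and bridge-survival computation on the backward half that isolates the initial-distance factor $\phi(0)$ rather than the terminal one $\phi(t)$.
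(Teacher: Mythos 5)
Your proposal is correct and follows essentially the same route as the paper. The paper also decomposes the lemma into a $\liminf \geq 1/2$ bound and a $\limsup \leq 1/2$ bound, proves a power-three estimate for the near-barrier probability (Lemma~\ref{lem:smallValue}: $\P^\phi(B_t \geq \phi(t)-x) \leq C(1+x)^3/(1+t)^{3/2}$, which is precisely your ballot estimate integrated against the $R^\phi$ weight), derives it by Markov at $t/2$ plus Uchiyama plus time-reversal, then runs Borel--Cantelli along a polynomial sequence with an oscillation estimate on the gaps; your use of the $h$-transform density, the exponent $t^{-3\epsilon}$, and the range $p \in (1/(3\epsilon),1/(2\epsilon))$ all match (the paper takes $t_n = n^{5/(6(1-2\gamma))}$, which lies in that window). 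The only cosmetic deviations are that the paper states the near-barrier estimate in tail form rather than density form, obtains the last two factors of $(1+x)$ by a second Markov split of the reversed path at $t/4$ rather than by a bridge-survival formula, and handles the gap oscillation by a direct $h$-transform bound rather than Cauchy--Schwarz; all three variants give the same rates and neither route is meaningfully shorter.
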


We split this lemma into several pieces. We begin with an upper bound for the probability for $B$ to be close to $\phi(t)$ at time $t$ under the law $\P^\phi$.
\begin{lemma}
\label{lem:smallValue}
Let $\phi$ be a function satisfying \eqref{eqn:hypPhi}. There exists $C > 0$ such that for all $t, x \geq 0$ we have
\[
  \P^\phi( B_t \geq \phi(t) - x) \leq C \left( \frac{1+x}{(1+t)^{1/2}}\right)^{3}.
\]
\end{lemma}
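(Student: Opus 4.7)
The plan is to apply the Doob $h$-transform to reduce to the unconditional Brownian motion, then use the Markov property at time $t/2$ to split the problem and exploit the Bessel-3-like scaling of the conditioned process on the second half.

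By the definition of $\P^\phi$ and the upper bound $R^\phi(y,t)\leq C(1+\phi(t)-y)$ from Lemma~\ref{lm:R_def}, on the event $\{B_t\geq\phi(t)-x\}$ one has $R^\phi(B_t,t)\leq C(1+x)$, hence
\[
\P^\phi(B_t\geq \phi(t)-x) = \frac{1}{R^\phi(0,0)}\E\big[R^\phi(B_t,t)\ind{\tau_\phi>t, B_t\geq \phi(t)-x}\big] \leq \frac{C(1+x)}{R^\phi(0,0)}\,\P\big(\tau_\phi>t, B_t\geq \phi(t)-x\big).
\]
It therefore suffices to show $\P(\tau_\phi>t, B_t\geq \phi(t)-x)\leq C(1+x)^2/(1+t)^{3/2}$; the case $t\leq 1$ is trivial, so I assume $t>1$ in what follows.

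By the Markov property at $t/2$, this probability equals $\E[\ind{\tau_\phi>t/2}F(B_{t/2})]$, with $F(y):=\P_y(\tau_{\phi_{t/2}}>t/2,\, B_{t/2}'\geq\phi(t)-x)$. Since $\phi$ is increasing, $\phi_{t/2}(v)\leq\phi(t)$ for $v\in[0,t/2]$, so $F(y)$ is dominated by the probability of the same event for the flat barrier $\phi(t)$. Setting $a=\phi(t)-y\geq 0$ and $w=\phi(t)-z$, the reflection principle gives the transition density
\[
\tfrac{1}{\sqrt{\pi t}}\big(e^{-(a-w)^2/t}-e^{-(a+w)^2/t}\big) = \tfrac{2}{\sqrt{\pi t}}e^{-(a^2+w^2)/t}\sinh(2aw/t)\leq \tfrac{Caw}{t^{3/2}},
\]
using $\sinh(u)\leq ue^u$ for $u\geq 0$. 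Integrating over $w\in[0,x]$ yields $F(y)\leq C(\phi(t)-y)\,x^2/t^{3/2}$.

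It remains to show $\E[\ind{\tau_\phi>t/2}(\phi(t)-B_{t/2})]=O(1)$. Decompose $\phi(t)-B_{t/2}=(\phi(t)-\phi(t/2))+(\phi(t/2)-B_{t/2})$: the first term contributes at most $\phi(t)\,\P(\tau_\phi>t/2)=o(t^{1/2-\epsilon})\cdot O(t^{-1/2})=o(1)$ by~\eqref{eq:R_def} and~\eqref{eqn:hypPhi}; for the second, the identity $R^\phi(y,t/2)=\E[B_{\tau_{\phi_{t/2}-y}}]$ from the proof of Lemma~\ref{lm:R_def} gives $R^\phi(y,t/2)\geq\phi(t/2)-y$, because the stopped Brownian motion lands on the shifted curve $\phi_{t/2}-y$, which is at least $\phi(t/2)-y$; the martingale property of $R^\phi(B_\cdot,\cdot)\ind{\tau_\phi>\cdot}$ then yields $\E[\ind{\tau_\phi>t/2}R^\phi(B_{t/2},t/2)]=R^\phi(0,0)$. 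Assembling the pieces gives $\P(\tau_\phi>t, B_t\geq\phi(t)-x)\leq Cx^2/t^{3/2}\leq C(1+x)^2/(1+t)^{3/2}$, completing the proof. The delicate point is the reflection-principle density estimate: obtaining the correct cubic exponent in the lemma requires the linearisation $\sinh(u)\asymp u$ for small $u$, which reflects the Bessel-3 scaling at short distances from the barrier.
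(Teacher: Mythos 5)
Your proof is correct, and the opening is the same as the paper's: you reduce to bounding $\P(\tau_\phi>t,\,B_t\geq\phi(t)-x)$ using the linear upper bound on $R^\phi$, apply the Markov property at $t/2$, and replace the curved boundary $\phi_{t/2}$ by the flat barrier $\phi(t)$. The endgame, however, is genuinely different. The paper takes a supremum over the position at time $t/2$, \emph{time-reverses} the Brownian motion on $[0,t/2]$, and then applies Uchiyama's barrier estimate together with a Gaussian density bound, picking up the remaining factor $(1+x)^2/t$. You instead keep track of the starting position $y=B_{t/2}$, write out the reflection-principle killed transition density explicitly, linearise $\sinh$ near $0$ to extract the factor $(\phi(t)-y)\,w/t^{3/2}$, and integrate in $w$ over $[0,x]$ to get $F(y)\leq C(\phi(t)-y)x^2/t^{3/2}$; you then control $\E[\ind{\tau_\phi>t/2}(\phi(t)-B_{t/2})]=O(1)$ via the martingale property of $R^\phi(B_\cdot,\cdot)\ind{\tau_\phi>\cdot}$ and the pointwise lower bound $R^\phi(y,s)\geq\phi(s)-y$ (which you rederive from the Novikov representation and which the paper proves separately in the ``linear growth'' subsection). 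Both arguments close the same gap; yours avoids time reversal entirely and makes the cubic exponent visible as the product of a linear factor from the $h$-transform, a linear factor from the density's vanishing at the barrier, and a $t^{-3/2}$ from the one-point killed kernel, at the modest cost of needing the martingale identity and the lower bound on $R^\phi$ to absorb $\E[\ind{\tau_\phi>t/2}(\phi(t)-B_{t/2})]$, whereas the paper's sup-over-$z$ step sidesteps that expectation but then needs a second application of a barrier estimate after time reversal. One small remark: your claim $\P(\tau_\phi>t/2)=O(t^{-1/2})$ uniformly in $t\geq 1$ is correct but is most cleanly justified by the uniform bound~\eqref{eqn:boundForR} with $x=0$ rather than the limit~\eqref{eq:R_def} alone.
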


\begin{proof}
Let $x \geq 0$ and $t \geq 1$. Using the definition of $\P^\phi$ we have
\begin{align*}
  \P^\phi(B_t \geq \phi(t) - x) = {} & \E\left( R(B_t,t) \ind{B_t \geq \phi(t) - x, \tau_\phi > t} \right)\\
  \leq {} & \sup_{z \in [0,x]} R(\phi(t)-z, t) \P\left(B_t \geq \phi(t) - x, \tau_\phi > t \right)\\
  \leq {} & C (1 + x) \P(B_t \geq \phi(t) - x, \tau_\phi>t),
\end{align*}
by \eqref{eqn:boundForR}. By the Markov property at time $t/2$, we have
\begin{align*}
  &\P(B_t \geq \phi(t) - x, \tau_\phi>t) \\
  \leq {} &\P(\tau_\phi > t/2) \sup_{z \in \R} \P_z(B_{t/2} \geq \phi(t) - x, B_s \leq \phi(t/2+s), s \leq t/2)\\
  \leq {} &C t^{-1/2} \sup_{z \in \R} \P_z( B_{t/2} \geq \phi(t) - x, B_s \leq \phi(t), s \leq t/2),
\end{align*}
using Theorem \ref{thm:Uchiyama}.

We now use time-reversal of the Brownian motion, observing that under $\P_z$, $\hat{B}_s := B_{t/2} - B_{t/2-s}$ is a Brownian motion started from $0$. We use it to estimate
\begin{align*}
  & \sup_{z \in \R} \P_z( B_{t/2} \geq \phi(t) - x, B_s \leq \phi(t), s \leq t/2)\\
  = {} & \sup_{z \in \R} \P_z(\hat{B}_{t/2}+ z \geq \phi(t)-x, \hat{B}_{t/2}+ z - \hat{B}_{s} \leq \phi(t), s \leq t/2)\\
  \leq {} & \sup_{z \in \R} \P_z(\hat{B}_{t/2} \geq \phi(t)- z - x,\hat{B}_{t/2}+z \leq \phi(t), \hat{B}_s \geq -x, s \leq t/2)\\
  = {} & \sup_{z' \in \R} \P(\hat{B}_{t/2} \in [z',z'+x], \hat{B}_s \geq -x, s \leq t/2)\\
  \leq {} & \P(\hat{B}_s \geq -x, s \leq t/4) \sup_{z \in \R} \P(\hat{B}_{t/4} \in [z,z+x]),
\end{align*}
using the Markov property at time $t/4$. Then, using again Theorem \ref{thm:Uchiyama}, there exists $C > 0$ such that for all $x \geq 0$ and $t \geq 1$,
\[
  \P(\hat{B}_s \geq -x, s \leq t/4) \leq C (1 + x)/t^{1/2}.
\]
Additionally, we have $ \P(\hat{B}_{t/4} \in [z,z+x]) \leq \sqrt{\frac{2}{\pi t}}x$ for all $z \in \R$, noting that the density of $\hat{B}_{t/4}$ is bounded by $\sqrt{\frac{2}{\pi t}}$. Finally, we obtain the existence of $C>0$ such that for all $t,x \geq 0$
\[
  \P^\phi(B_t \geq \phi(t) - x) \leq C \frac{(1+x)^3}{(1+t)^{3/2}}. \qedhere
\]

\end{proof}

We now use this result to bound from below the asymptotic behaviour of $\frac{\log (-B_t)}{\log t}$.
\begin{lemma}
\label{lm:BeTypeFluc:lb}
Given $\phi$ a function satisfying \eqref{eqn:hypPhi}, we have
\[
  \liminf_{t \to \infty} \frac{\log(-B_t) }{\log t} \geq \frac{1}{2} \quad \text{a.s.}
\]
\end{lemma}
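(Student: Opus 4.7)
The plan is to combine the pointwise estimate of Lemma~\ref{lem:smallValue} with a Borel--Cantelli argument along a carefully chosen polynomial subsequence, then interpolate between subsequence times by controlling Brownian oscillations via the absolute continuity of $\P^\phi$ with respect to $\P$.

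Fix $\epsilon \in (0, \tfrac{1}{6} \wedge (\tfrac{1}{2}-\alpha))$ small, so that in particular $\phi(t) = o(t^{1/2-\epsilon})$. Applying Lemma~\ref{lem:smallValue} with $x = \phi(t) + t^{1/2-\epsilon}$ yields, for $t$ large,
\[
\P^\phi(B_t \geq -t^{1/2-\epsilon}) \leq C t^{-3\epsilon}.
\]
This decay is too slow to sum over integer $t$ when $\epsilon$ is small, so I would pick an integer $k$ with $1/(3\epsilon) < k < 1/(2\epsilon)$ --- always possible since the interval has length $1/(6\epsilon) > 1$ --- and work along the subsequence $t_n := n^k$. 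The series $\sum_n t_n^{-3\epsilon} = \sum_n n^{-3k\epsilon}$ is summable, and a first Borel--Cantelli application gives $B_{t_n} \leq -t_n^{1/2-\epsilon}$ eventually, $\P^\phi$-almost surely.

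To extend the bound to all $s \in [t_n, t_{n+1}]$, I would control the upward oscillation of $B$ on each block via
\[
\P^\phi\Bigl(\sup_{s \in [t_n, t_{n+1}]}(B_s - B_{t_n}) > \tfrac{1}{2} t_n^{1/2-\epsilon}\Bigr).
\]
Using $\left. \frac{\dd\P^\phi}{\dd\P}\right|_{\mathcal{F}_{t_{n+1}}} = \frac{R^\phi(B_{t_{n+1}},t_{n+1})}{R^\phi(0,0)} \ind{\tau_\phi > t_{n+1}}$ together with the linear bound $R^\phi(x,t) \leq C(1+\phi(t)-x)$ from Lemma~\ref{lm:R_def} and the Gaussianity of $B_{t_{n+1}}$ under $\P$, Cauchy--Schwarz yields $\P^\phi(A) \leq C \sqrt{t_{n+1}}\, \P(A)^{1/2}$ for any $A \in \mathcal{F}_{t_{n+1}}$. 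Applied with the above event $A$, Doob's maximal inequality and the Gaussian tail produce $\P(A) \leq 2\exp(-c\, n^{1-2k\epsilon}/k)$; the condition $k < 1/(2\epsilon)$ ensures the exponent is positive, so the $\P^\phi$-bound is summable in $n$. A second Borel--Cantelli application gives the oscillation bound almost surely. Combining the two estimates, $B_s \leq -\tfrac{1}{2} t_n^{1/2-\epsilon}$ for all $s \in [t_n, t_{n+1}]$ and all $n$ large, and since $t_{n+1}/t_n \to 1$ this is at most $-s^{1/2-2\epsilon}$ for large $s$. Letting $\epsilon \to 0$ yields the desired liminf.

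The main obstacle is the delicate balance in the choice of $k$: the natural Brownian oscillation on $[t_n,t_{n+1}]$ is of order $\sqrt{t_{n+1}-t_n} \asymp n^{(k-1)/2}$, the same polynomial scale as the barrier $t_n^{1/2-\epsilon} = n^{k(1/2-\epsilon)}$ we need to maintain, which is what forces $k$ into the narrow window $(1/(3\epsilon),1/(2\epsilon))$. Transferring oscillation bounds from $\P$ to $\P^\phi$ via Cauchy--Schwarz and the linear asymptotic of $R^\phi$ is what makes this competition between the two Borel--Cantelli sums work.
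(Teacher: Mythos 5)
Your proof is correct, and the first half (the discrete Borel--Cantelli step via Lemma~\ref{lem:smallValue} along a polynomial subsequence $t_n = n^k$) is essentially the same as the paper's, which uses $t_n = n^{5/(6(1-2\gamma))}$ with $\gamma = \tfrac12 - \epsilon$; that exponent lies in the same window $(1/(3\epsilon), 1/(2\epsilon))$ that you identify.

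Where you genuinely diverge is the interpolation between subsequence times. The paper estimates $\P^\phi\bigl(\sup_{[t_n,t_{n+1}]} B_s \geq -t_n^\gamma/2,\ B_{t_n} \leq -t_n^\gamma\bigr)$ directly: it unwraps the Radon--Nikodym density, splits according to whether $B_{t_{n+1}}$ lies above or below $-t_{n+1}^{2/3}$ (using a Gaussian tail bound to kill the ``far'' case and the pointwise bound $R^\phi \leq C t_{n+1}^{2/3}$ on the ``near'' case), and then controls the remaining $\P$-probability by applying the Markov property at $t_n$ and Brownian scaling to show it decays like $e^{-cn^{1/6}}$. You instead use Cauchy--Schwarz with the second-moment estimate $\E[R^\phi(B_{t_{n+1}},t_{n+1})^2 \ind{\tau_\phi > t_{n+1}}] \leq C t_{n+1}$ to get the absolute-continuity inequality $\P^\phi(A) \leq C\sqrt{t_{n+1}}\,\P(A)^{1/2}$ for any $A \in \mathcal{F}_{t_{n+1}}$, and then bound the oscillation event under $\P$ by a single Gaussian estimate; moreover you do not need to intersect with the discrete event $\{B_{t_n} \leq -t_n^{1/2-\epsilon}\}$, since both events hold eventually by separate Borel--Cantelli arguments. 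Your route avoids the near/far decomposition and the explicit Markov/scaling computation, at the (harmless) cost of replacing $\P(A)$ by $\P(A)^{1/2}$ and the polynomial prefactor $t_{n+1}^{2/3}$ by $t_{n+1}^{1/2}$; since the $\P$-probability decays stretched-exponentially in $n$, both choices produce a summable series. The Cauchy--Schwarz transfer is a clean, reusable shortcut for converting $\P^\phi$-oscillation bounds into $\P$-oscillation bounds that the paper does not employ here.
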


\begin{proof}
To prove this result, we begin by using the Borel-Cantelli lemma to show that almost surely, for all $\gamma < 1/2$,
\begin{equation}
  \label{eqn:aim:discreter}
  \liminf_{n \to \infty} \frac{\log(-B_{t_n})}{\log t_n} \geq \gamma \quad \text{a.s.}
\end{equation}
along a well-chosen sequence $t_n$ growing to $\infty$. We then use the observation that with high probability the Brownian motion between times $t_{n}$ and $t_{n+1}$ stays within a distance $O(t_{n+1}-t_n)^{1/2}$ from $B_{t_n}$. Therefore, as long as $(t_{n+1}-t_n)^{1/2}/t_n^\gamma \to 0$, we can extend \eqref{eqn:aim:discreter} to any sequence growing to $\infty$, which completes the proof.

Let $\gamma < 1/2$. We assume without loss of generality that $\gamma$ is close enough to $1/2$, such that $\phi(t) = o(t^\gamma)$. Using Lemma \ref{lem:smallValue} we have
\[
  \P^\phi(B_t \geq - t^\gamma) \leq C t^{3\left(\gamma - \frac{1}{2}\right)}.
\]
As a result, setting $t_n = n^{\frac{5}{6(1-2\gamma)}}$, we have
\begin{equation}
  \label{eqn:onebound}
  \P^\phi\left( \frac{\log (-B_{t_n})}{\log t_n} \leq \gamma \right) \leq C n^{- 5/4},
\end{equation}
hence, by the Borel-Cantelli lemma,
\[
  \liminf_{n \to \infty}  \frac{\log (-B_{t_n})}{\log t_n} \geq \gamma \quad \text{a.s.}
\]

To complete the proof we now need to bound the maximal displacement of the Brownian motion in the time intervals $[t_{n},t_{n+1}]$. Write $A  = \frac{5}{6(1-2\gamma)}$ so that $t_n = n^A$ and compute for $n \in \N$
\begin{align*}
  &\P^\phi\left( \sup_{s \in [t_n,t_{n+1}]} B_s \geq -t_n^{\gamma}/2, B_{t_n} \leq -t_n^\gamma \right)\\
  = {} & \E\left(R^\phi(B_{t_{n+1}},t_{n+1}) \ind{\tau_\phi > t_{n+1},B_{t_n} \leq -t_n^\gamma} \ind{\sup_{s \in [t_n,t_{n+1}]} B_s \geq -t_n^{\gamma}/2} \right).
\end{align*}

We can decompose this quantity depending on whether $B_{t_{n+1}}$ is smaller or larger than $-t_{n+1}^{2/3}$. Observe that for all $t \geq 1$ we have
\begin{align*}
  \E\left(R^\phi(B_{t},t) \ind{B_t < -t^{2/3}}\right)
  \leq {} &C \E\left( \left( 1 + |B_{t}| + |\phi(t)|\right) \ind{B_{t} < -t^{2/3}} \right)\\
  \leq {} &C \E\left( |B_{t}| \ind{B_{t} < - t^{2/3}} \right) \leq C e^{- \frac{t^{4/3}}{2t}},
\end{align*}
using that $|\phi(t)| = o(t^{2/3})$ as $t \to \infty$ and integrating with respect to the Brownian density. Thus, there exists $C > 0$ such that for all $n \in \N$ 
\begin{equation*}
  \E\left(R^\phi(B_{t_{n+1}},t_{n+1}) \ind{B_{t_{n+1}} < -t_{n+1}^{2/3}}\right)
  \leq C\exp\left(-t_{n+1}^{1/3}/2\right).
\end{equation*}
Hence, using that there exists $C > 0$ such that $R^\phi(x,t_{n+1}) \leq C t_{n+1}^{2/3}$ for all $x \geq - t_{n+1}^{2/3}$, 
\begin{multline}
  \E\left(R^\phi(B_{t_{n+1}},t_{n+1}) \ind{\tau_\phi > t_{n+1},B_{t_n} \leq -t_n^\gamma} \ind{\sup_{s \in \left[t_n,t_{n+1}\right]} B_s \geq -t_n^{\gamma}/2} \right)\\
  \leq   C t_{n+1}^{2/3} \P\left(\tau_\phi > t_{n+1},\sup_{s \in [t_n,t_{n+1}]} B_s \geq -t_n^{\gamma}/2 , B_{t_n} \leq -t_n^\gamma \right) \\
  + C\exp\left(-t_{n+1}^{1/3}/2\right). 
  \label{eqn:intermediate}
\end{multline}

We now bound $\P\left(\tau_\phi > t_{n+1},\sup_{s \in [t_n,t_{n+1}]} B_s \geq -t_n^{\gamma}/2 , B_{t_n} \leq -t_n^\gamma \right)$. Using the Markov property at time $t_n$ we have
\begin{multline*}
  \P\left(\tau_\phi > t_{n+1},\sup_{s \in [t_n,t_{n+1}]} B_s \geq -t_n^{\gamma}/2 , B_{t_n} \leq -t_n^\gamma \right)\\
  \leq \E\left( G_n(B_{t_n}) \ind{\tau_\phi > t_n}   \ind{B_{t_n} \leq -t_n^\gamma}\right),
\end{multline*}
where $G_n(x) := \P_x\left(\ind{\sup_{s \leq t_{n+1}-t_n} B_s \geq -t_n^{\gamma}/2} \right)$. As $G_n(x)$ is non-decreasing in $x$, using the Brownian scaling, for all $x \leq -t_n^\gamma$ we have
\[
  G_n(x) \leq G_n(-t_n^\gamma) = \P_{-1} \left( \sup_{s \leq (t_{n+1}-t_n)/t_n^{2\gamma}} B_s \geq -1/2  \right).
\]
By definition of $A$ and $t_n$ we note that
\[
  \frac{t_{n+1} - t_n}{t_n^{2\gamma}} = \frac{(n+1)^A - n^A}{n^{2 A \gamma}} \sim A n^{A-1-2 \gamma A} = A n^{-1/6} \text{ as } n \to \infty.
\]
As the maximum of a Brownian motion on $[0,s]$ is distributed as the absolute value of a Gaussian random variable with parameter $s$, and using standard Gaussian estimates, we have
\begin{align*}
  \P_{-1} \left( \sup_{s \leq (t_{n+1}-t_n)/t_n^{2\gamma}} B_s \geq -1/2  \right) &\leq \P\left( \sup_{s \leq C A n^{-1/6}} B_s \geq 1/2\right) \\
  &\leq \frac{1}{\sqrt{\pi C A n^{-1/6}}} \exp\left( - \frac{1}{8C A n^{-1/6}} \right).
\end{align*}
Thus we deduce that for all $x \leq -t_n^\gamma$ we have $G_n(x) \leq C e^{-cn^{1/6}}$. Since $t_n$ has polynomial growth, we therefore obtain from \eqref{eqn:intermediate} that there exists $C,\delta > 0$ such that
\begin{align*}
  \E\left(R^\phi(B_{t_{n+1}},t_{n+1}) \ind{\tau_\phi > t_{n+1},B_{t_n} \leq -t_n^\gamma} \ind{\sup_{s \in \left[t_n,t_{n+1}\right]} B_s \geq -t_n^{\gamma}/2} \right) \leq C e^{-n^\delta}.
\end{align*}

We now conclude, using \eqref{eqn:onebound}, that
\begin{align*}
  &\sum_{n \in \N} \P^\phi(\sup_{s \in [t_n,t_{n+1}]} B_s \geq - t_n^{\gamma}/2)\\
   \leq {} &\sum_{n \in \N} \P^\phi(B_{t_n} \geq -t_n^\gamma) + \sum_{n \in \N} \P^\phi\left(B_{t_n} \leq -t_n^\gamma, \sup_{s \in [t_n,t_{n+1}]} B_s \geq - t_n^\gamma/2\right)\\
  \leq & C \sum_{n \in \N} n^{-5/4} + \sum_{n \in \N} e^{-n^{\delta}} < \infty,
\end{align*}
which completes the proof, by the Borel-Cantelli lemma.
\end{proof}

A similar simpler proof also gives an upper bound for $\log (-B_t)/\log t$ under the law $\P^\phi$.
\begin{lemma}
\label{lm:BeTypeFluc:ub}
Given $\phi$ a function satisfying \eqref{eqn:hypPhi}, we have
\[
  \limsup_{t \to \infty} \frac{\log(-B_t) }{\log t} \leq \frac{1}{2} \quad \text{a.s.}
\]
\end{lemma}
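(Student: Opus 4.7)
The plan is to fix any $\gamma \in (1/2,1)$ and show, via the Borel–Cantelli lemma, that $\P^\phi$-almost surely $\inf_{s\in[n,n+1]} B_s \geq -n^\gamma$ for all $n$ large enough. Since $t \in [n,n+1]$ then forces $-B_t \leq n^\gamma \leq t^\gamma$, letting $\gamma$ decrease to $1/2$ along a rational sequence will yield the lemma.

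Set $A_n := \{\inf_{s\in[n,n+1]} B_s \leq -n^\gamma\} \in \mathcal{F}_{n+1}$. The first step is to unravel the definition of $\P^\phi$ and apply the bound $R^\phi(x,t) \leq C(1+\phi(t)-x)$ from Lemma~\ref{lm:R_def}, which is valid on $\{\tau_\phi > n+1\} \subseteq \{B_{n+1} \leq \phi(n+1)\}$. This gives
\begin{equation*}
\P^\phi(A_n) \leq \frac{C}{R^\phi(0,0)}\, \E\bigl[(1 + \phi(n+1) + |B_{n+1}|)\, \ind{A_n}\bigr].
\end{equation*}
Since $\phi(n+1) = o(n^{1/2})$ by \eqref{eqn:hypPhi}, the random variable $1 + \phi(n+1) + |B_{n+1}|$ has $L^2(\P)$-norm of order $\sqrt{n}$, so Cauchy–Schwarz yields $\P^\phi(A_n) \leq C\sqrt{n}\, \sqrt{\P(A_n)}$.

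For the vanilla probability $\P(A_n)$, the reflection principle gives
\begin{equation*}
\P(A_n) \leq \P\Bigl(\inf_{s\leq n+1} B_s \leq -n^\gamma\Bigr) = 2\,\P(B_{n+1} \geq n^\gamma) \leq 2 \exp\bigl(-n^{2\gamma-1}/3\bigr)
\end{equation*}
for $n$ large. Therefore $\P^\phi(A_n) \leq C\sqrt{n}\exp(-n^{2\gamma-1}/6)$, which is summable since $2\gamma - 1 > 0$, and Borel–Cantelli concludes.

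There is no real obstacle here; the proof is much shorter than that of Lemma~\ref{lm:BeTypeFluc:lb}, for exactly the reason hinted at in the text. The Gaussian deviation $\P(B_{n+1} \leq -n^\gamma)$ decays stretched-exponentially in $n$, so the integer times already form a dense enough subsequence: no two-stage Borel–Cantelli along a sparse subsequence $t_n = n^A$ followed by a separate control of oscillations on $[t_n,t_{n+1}]$ via time-reversal is required. The single bound above, applied directly to the full interval $[n,n+1]$, is enough.
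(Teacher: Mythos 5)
Your proof is correct and follows essentially the same approach as the paper: fix $\gamma>1/2$, bound $\P^\phi\bigl(\inf_{s\in[n,n+1]}B_s\leq -n^\gamma\bigr)$ using the change of measure, the linear bound on $R^\phi$ from Lemma~\ref{lm:R_def}, and Gaussian concentration, then invoke Borel--Cantelli and let $\gamma\downarrow 1/2$. The only difference is that you spell out the step the paper leaves implicit, using Cauchy--Schwarz to decouple $R^\phi(B_{n+1},n+1)$ from the indicator; this is a perfectly clean way to obtain the summable bound.
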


\begin{proof}
Let $\alpha  > 1/2$. We observe that for all $n \in \N$ we have
\begin{align*}
  \P^\phi( \inf_{s \in [n, n +1]} B_s \leq - n^\alpha)
  &\leq \frac{1}{R^\phi(0,0)} \E(R^\phi(B_{n+1},n+1)\ind{\inf_{s \in [n,n+1]} B_s \leq- n^\alpha})\\
  &\leq C e^{- c n^{2 \alpha - 1}},
\end{align*}
using that $R^\phi(x,n+1)$ grows at most linearly in $-x$, and the Gaussian concentration of $\inf_{s \in [n,n+1]} B_s$. As a result, by the Borel-Cantelli lemma we conclude that
\[
  \limsup_{t \to \infty} \frac{\log (-B_t)}{\log t} \leq \alpha \quad \text{a.s.}
\]
We complete the proof by letting $\alpha \to 1/2$.
\end{proof}

The proof of Lemma \ref{lm:Bessel_type_fluctuations} is then a combination of Lemmas \ref{lm:BeTypeFluc:lb} and \ref{lm:BeTypeFluc:ub}.

\subsection{Linear growth}

In this section we prove the key property of $R^\phi$: the function grows linearly in $-x$ uniformly in $t$. We begin with the following lower bound on $R^\phi$, which is a straightforward consequence of the definition in Theorem \ref{thm:Novikov_existence}.
\begin{lemma}
Let $\phi$ be a function satisfying \eqref{eqn:hypPhi}, then for all $t \geq 0$ and $x \leq \phi(t)$,
\[
  R^\phi(x,t) \geq \phi(t)-x.
\]
\end{lemma}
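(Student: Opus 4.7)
The plan is to extract an explicit pathwise representation of $R^\phi(x,t)$ from Novikov's theorem (already used in the proof of Lemma \ref{lm:R_def}) and then to exploit the monotonicity of $\phi$.

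First, by translation invariance of Brownian motion, one has $\P_x(\tau_{\phi_t} > s) = \P_0(\tau_{\phi_t - x} > s)$, where $\phi_t - x$ denotes the function $u \mapsto \phi(t+u) - x$. For $x < \phi(t)$, assumption \eqref{eqn:hypPhi} guarantees that $\phi_t - x$ is continuous, non-decreasing, starts at the strictly positive value $\phi(t) - x$, and satisfies $\int_1^\infty (\phi_t(u) - x)\, u^{-3/2}\,\dd u < \infty$ since $\phi(u) = o(u^\alpha)$ with $\alpha < 1/2$. Theorem \ref{thm:Novikov_existence} then yields
\[
  R^\phi(x,t) = \sqrt{\pi/2}\,\lim_{s \to \infty}\sqrt{s}\,\P_0(\tau_{\phi_t - x} > s) = \E\bigl(B_{\tau_{\phi_t - x}}\bigr),
\]
where $B$ is a standard Brownian motion started at $0$. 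This is the representation already used in the proof of Lemma \ref{lm:R_def}.

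The claim then follows from a one-line pathwise observation. By continuity of $B$ and of $\phi$, on the event $\{\tau_{\phi_t - x} < \infty\}$ (which has probability one by Novikov's theorem, since the limiting expectation is finite), we have $B_{\tau_{\phi_t - x}} = \phi(t + \tau_{\phi_t - x}) - x$ almost surely. Since $\phi$ is increasing and $\tau_{\phi_t - x} \geq 0$, this forces $B_{\tau_{\phi_t - x}} \geq \phi(t) - x$ pathwise, and taking expectations yields the bound for $x < \phi(t)$. The boundary case $x = \phi(t)$ is trivial since $R^\phi \geq 0$.

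I do not expect any serious obstacle: the whole argument is a direct consequence of the Novikov-type representation combined with the monotonicity of $\phi$ built into \eqref{eqn:hypPhi}. The only minor sanity check is to confirm that $\phi_t - x$ still lies in the class for which Theorem \ref{thm:Novikov_existence} applies, which is immediate from \eqref{eqn:hypPhi}.
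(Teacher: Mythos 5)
Your proof is correct and takes essentially the same route as the paper: applying Novikov's theorem to represent $R^\phi(x,t) = \E B_{\tau_{\phi_t - x}}$, then using the monotonicity of $\phi$ to obtain the pathwise lower bound $B_{\tau_{\phi_t - x}} = \phi(t+\tau_{\phi_t-x}) - x \geq \phi(t) - x$. You simply spell out a few details (translation invariance, continuity at the hitting time) that the paper's version leaves implicit.
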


\begin{proof}
Recall that for all $s \geq 0$ we have $\phi_t(s) = \phi(t+s) \geq \phi(t)$, as $\phi$ is increasing. Therefore, by Theorem \ref{thm:Novikov_existence} we have $\tau_\phi < \infty$ a.s. and 
\[
  R^\phi(x,t) = \E B_{\tau_{\phi_t-x}} \geq \phi(t)-x,
\]
completing the proof.
\end{proof}

To obtain a uniform upper bound on $R^\phi$, we need to add an assumption on the growth rate of the derivative of $\phi$.
\begin{lemma}
\label{lm:linearGrowth}
Let $\phi$ be a function satisfying \eqref{eqn:hypPhi}, and assume additionally that $\phi'(t) = o(t^{-1/2-\epsilon})$ for some $\epsilon > 0$. Then for all $\delta > 0$ and $D > 0$ there exists $t_0 > 0$ such that
\begin{equation}
  \label{eqn:aimRasymptotics}
  \forall t \geq t_0, \ \forall x \in [\phi(t) - D t,\phi(t) - D t_0], \ R^\phi(x,t) \leq (\phi(t)-x) (1 + \delta). 
\end{equation}
\label{lm:R_asymptotics}
\end{lemma}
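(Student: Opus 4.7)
Write $L := \phi(t) - x$, so $L \in [Dt_0, Dt]$; the goal is $R^\phi(x,t) \leq (1+\delta)L$. The plan starts from the representation obtained in the proof of Lemma~\ref{lm:R_def} (via Novikov's theorem): $R^\phi(x,t) = \E B_\tau$, where $B$ is a standard Brownian motion started at $0$ and $\tau := \tau_{\phi_t - x}$ is its first hitting time of the continuous curve $s \mapsto \phi(t+s) - x$. Continuity of paths and curve gives $B_\tau = \phi(t+\tau) - x = L + (\phi(t+\tau) - \phi(t))$, so writing $\phi(t+\tau) - \phi(t) = \int_0^\tau \phi'(t+u)\,du$ and applying Fubini,
\[
R^\phi(x,t) = L + \int_0^\infty \phi'(t+u)\,\P(\tau > u)\,du.
\]
The task reduces to showing the integral is at most $\delta L$ for $t \geq t_0$ with $t_0$ large.

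To control $\P(\tau > u)$, I apply Uchiyama's theorem (Theorem~\ref{thm:Uchiyama}) to the shifted profile $\hat\phi(u) := \phi(t+u) - \phi(t)$, which is $\mathcal{C}^1$, increasing, concave, with $\hat\phi(0) = 0$, $\hat\phi(u) = o(u^{\alpha})$, and which satisfies the chord condition (by concavity and $\hat\phi(0)=0$). After the translation $\P_0(\tau_{\phi_t - x} > u) = \P_{-L}(\tau_{\hat\phi} > u)$, the theorem gives, for $u \geq 1$,
\[
\P(\tau > u) \leq \frac{1+L}{\sqrt{u}} \exp\!\left( \frac{\sqrt{2\pi}}{4} \int_1^\infty \hat\phi(v) v^{-3/2}\,dv + C \int_1^\infty \hat\phi(v)^2 v^{-2}\,dv \right).
\]
Both integrals in the exponential tend to $0$ as $t \to \infty$: for $v \leq t$ one bounds $\hat\phi(v) \leq v\phi'(t)$ by concavity, while for $v > t$ one uses $\hat\phi(v) \leq Cv^\alpha$ from \eqref{eqn:hypPhi}. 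The strengthened hypothesis $\phi'(t) = o(t^{-1/2-\epsilon})$ then yields decay $O(t^{-\epsilon})$ for the first integral and $O(t^{-2\epsilon} + t^{2\alpha - 1})$ for the second. In particular, for all $t$ large, $\P(\tau > u) \leq K(1+L)/\sqrt{u}$ on $u \geq 1$, with $K$ a constant depending only on $\phi$.

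Combining this with $\P(\tau > u) \leq 1$ on $[0,1]$,
\[
\int_0^\infty \phi'(t+u) \P(\tau > u)\,du \leq \phi'(t) + K(1+L) \int_1^\infty \phi'(t+u)/\sqrt{u}\,du.
\]
The substitution $u = tr$ together with $\phi'(t+u) \leq C(t+u)^{-1/2-\epsilon}$ gives $\int_1^\infty \phi'(t+u)/\sqrt{u}\,du \leq C' t^{-\epsilon}$. Since $Dt_0 \geq 1$ forces $L \geq 1$ (hence $1+L \leq 2L$), choosing $t_0$ large enough that $\phi'(t) \leq \delta/2$ and $2KC' t^{-\epsilon} \leq \delta/2$ for all $t \geq t_0$ yields $\E[\phi(t+\tau) - \phi(t)] \leq \delta L$, completing the proof. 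The main obstacle is precisely the uniform-in-$t$ tail estimate $\P(\tau > u) \leq K(1+L)/\sqrt{u}$: this is where the strengthened derivative hypothesis is essential, as it is what forces Uchiyama's exponential factor to stay bounded (in fact to tend to $1$) along the family $(\hat\phi)_{t \geq t_0}$; everything else reduces to routine bookkeeping of integrals involving $\phi'$.
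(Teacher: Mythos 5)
Your proof is correct, and it takes a genuinely different route from the paper. Starting from the representation $R^\phi(x,t)=\E B_{\tau_{\phi_t-x}}$, you isolate the overshoot term $\E[\phi(t+\tau)-\phi(t)]$, write it as $\int_0^\infty\phi'(t+u)\,\P(\tau>u)\,\dd u$ by Tonelli, and control the tail probability directly with Uchiyama's bound (Theorem~\ref{thm:Uchiyama}) applied to the shifted profile $\hat\phi(u)=\phi(t+u)-\phi(t)$. The key observations — that $\hat\phi$ inherits the chord condition from concavity, that the two exponential correction integrals in Uchiyama's bound tend to $0$ as $t\to\infty$ (using $\hat\phi(v)\leq v\phi'(t)$ for $v\leq t$ and the polynomial cap beyond), and that $\int_1^\infty\phi'(t+u)u^{-1/2}\,\dd u=O(t^{-\epsilon})$ by scaling — are all sound. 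The constant $K$ in your tail bound does depend on $t$, but since the exponential factor converges to $1$ it is bounded for $t\geq t_0$, which is all you need.

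The paper instead dominates $\phi$ by a pure power $\psi(t)=At^\gamma$ (possible by the strengthened derivative hypothesis), reduces via \eqref{eq:R_phi_psi} to bounding $R^\psi$, introduces the overshoot function $S^\psi(x,t)=\E_{-x}(B_{\tau_{\psi_t-\psi(t)}})$, and exploits Brownian scaling together with the monotone convergence theorem applied to the rescaled curves $\psi^\lambda$ to show $S^\psi(Dt,t)/t\to 0$. That argument is more structural (comparison, scaling, monotonicity) and avoids ever invoking a quantitative tail bound, while yours is more direct and quantitative, actually producing a rate $O(t^{-\epsilon})$ for the error and making explicit exactly where the hypothesis $\phi'(t)=o(t^{-1/2-\epsilon})$ enters, namely in forcing Uchiyama's exponential factor to remain bounded along the family of shifted profiles. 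Both approaches are valid; yours trades the elegance of the scaling/monotone-convergence trick for concreteness.
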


\begin{proof}
Observe that by the assumption on the function $\phi$, there exists $\gamma < 1/2$ and $A > 0$ such that for all $t \geq 0$ we have $0 \leq \phi'(t) \leq A \gamma t^{\gamma-1}$. By integration we immediately obtain that for all $s,t \geq 0$
\[
  \phi(t+s) - \phi(t) \leq \psi(t+s) - \psi(t),
\]
where we have set $\psi(t) = A t^\gamma$. It is then straightforward to note that for all $s,t \geq 0$ and $x \leq \phi(t)$
\[
  \P_{x}\left( B_u \leq \phi(t+u), u \leq s\right) \leq \P_{x}\left( B_u \leq \psi(t+u)- \psi(t) + \phi(t), u \leq s \right).
\]
As a result, by Theorem \ref{thm:Novikov_existence} and using that $R^\phi(x,t) = 0$ for $x \geq \phi(t)$, we obtain that
\begin{align}
R^\phi(x,t) \leq R^\psi(x+\psi(t)-\phi(t),t)
\label{eq:R_phi_psi}
\end{align}
for all $x \in \R$ and $t \geq 0$. Therefore, we shall work with $R^\psi$ which will simplify some arguments, and use \eqref{eq:R_phi_psi} to prove \eqref{eqn:aimRasymptotics}.

For $t,x \geq 0$ set
\[
  S^\psi(x,t) := R^\psi(\psi(t)-x,t)  - x = \E_{-x} (B_{\tau_{\psi_t-\psi(t)}}).
\]
Observe that as $\psi$ is concave, for all $s \geq 0$ we have that $\psi_t(s) - \psi(t)$ is decreasing with $t$. Therefore $t \mapsto S^\psi(x,t)$ is decreasing, hence for all $D > 0$ one has $S^\psi(x,t) \leq S^\psi(x,x/D)$ as long as $x \leq Dt$. We shall show that for any $D > 0$ we have $S^\psi(Dt,t) / t \to 0$ as $t \to \infty$.

Fix $D > 0$. For all $\lambda, t > 0$ define 
\[
\psi^\lambda(t) := \frac{1}{\lambda} (\psi(\lambda + \lambda^2 t) - \psi(\lambda)),
\]
and observe that by the scaling property of the Brownian motion we have
\begin{equation}
  \frac{S^{\psi}(\lambda D,\lambda)}{\lambda} = \frac{1}{\lambda} \E_{-\lambda D} (B_{\tau_{\psi_\lambda - \psi(\lambda)}})
  = \E_{-D}( B_{\tau_{\psi^\lambda}}).
  \label{eq:S_lemma_iv}
\end{equation}
Observe that $(\psi^\lambda, \lambda > 1)$ decreases to $0$ as $\lambda \to \infty$. We can also note that the convergence is monotone outside of a compact set. Indeed, for all $u > 0$,
\begin{align*}
  \frac{1}{A}\frac{\dd \psi^\lambda(u)}{\dd \lambda}
  &= \frac{\dd }{\dd \lambda}\left( \frac{1}{\lambda} \left((\lambda + u \lambda^2)^\gamma - \lambda^\gamma \right)\right)\\
  &= \frac{1}{\lambda^2} \left( (1 - \gamma) \lambda^\gamma - (1 - 2 \gamma)(\lambda + u \lambda^2)^{\gamma} - \gamma \lambda (\lambda + u \lambda^2)^{\gamma-1} \right).
\end{align*}
In particular, it appears there exists $\lambda_0>0$ such that for all $u > 1$ and $\lambda > \lambda_0$ we have that $\frac{\dd \psi^\lambda(u)}{\dd \lambda}< 0$. Therefore, setting $\bar{\psi}^\lambda(u) := \psi^\lambda(u\vee 1)$, we have
\[
  0\leq \E_{-D}( B_{\tau_{\psi^\lambda}}) \leq \E_{-D}(B_{\tau_{\bar{\psi}^\lambda}}) \to 0 \quad \text{ as } \lambda \to \infty,
\]
by the monotone convergence theorem, using that $\bar{\psi}^\lambda$ decreases to $0$ when $\lambda \to \infty$. Therefore, \eqref{eq:S_lemma_iv} yields
\[
  \lim_{t\to \infty} S^{\psi}(D t ,t)/t = 0.
\]

Choose $\delta > 0$. There exists $t_0>0$ such that for all $t > t_0$ we have $S^\psi(Dt,t) \leq \delta D t$. Then, recalling \eqref{eq:R_phi_psi}, for all $D t_0 \leq y \leq D t$ we have
\[
  R^\phi(\phi(t) - y , t) \leq R^\psi(\psi(t) - y , t) = y + S^\psi(y,t) \leq y + S^\psi(y,y/D)\leq (1 + \delta)y,
\]
which, setting $x := \phi(t) - y$, completes the proof.
\end{proof}

\section{Multidimensional Branching Brownian Motion and uniformly integrable approximations of the martingale}
\label{sec:BBM_and_shaved_martingale}

In this section we prove Theorem \ref{thm:main}, showing that the derivative martingale almost surely converges in almost every direction simultaneously. As we mentioned in the introduction, the techniques are based on a shaving argument: removing all particles that travel too far away from the origin, and therefore carry most of the fluctuations of $Z$. It turns the derivative martingale into a uniformly integrable martingale. We use here the results obtained in the previous section to construct a shaving argument with a function satisfying \eqref{eqn:hypPhi}.

Before moving to the multidimensional setting, we are going to define the  martingale $Z^\phi$ in dimension $1$, that will serve as a uniformly integrable approximation of the derivative martingale $Z$. To be precise, set
\begin{align*}
\mathcal{N}^{\phi}_t := \{ j \in \mathcal{N}_t : X_s(j) \leq \sqrt{2} s + \phi(s) , s \leq t\}.
\end{align*}
The  martingale $Z^\phi$ is then defined in the following way.
\begin{proposition}
	Let $\phi$ be a function satisfying \eqref{eqn:hypPhi}. We set $R^\phi$ as in \eqref{eq:R_def}. Then the process defined for all $t \geq 0$ by
	\begin{align*}
	Z_t^{\phi} := \sum_{j \in \mathcal{N}_t^\phi} R^{\phi}(X_t(j) - \sqrt{2}t, t)e^{\sqrt{2}( X_t(j)  - \sqrt{2}t)}
	\end{align*}
	is a non-negative martingale with mean $R^\phi(0,0)$.
	\label{prop:non_linear_shaving_definition}
\end{proposition}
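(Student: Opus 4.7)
Non-negativity and the initial value are immediate: each summand is a product of the nonnegative factor $R^\phi(X_t(j) - \sqrt{2}t, t)$ (nonnegative by Lemma \ref{lm:R_def}, since the restriction to $\mathcal{N}_t^\phi$ forces $X_t(j) - \sqrt{2}t \leq \phi(t)$) and a positive exponential; at $t = 0$ the single ancestor sits at the origin with $0 \leq \phi(0)$, giving $Z_0^\phi = R^\phi(0,0)$.

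The martingale property will follow from the branching property at time $s$ and the many-to-one formula, reducing the question to a single-particle computation handled directly by Lemma \ref{lm:1dim_change}. The key observation is that for a standard Brownian motion $B$,
\[
e^t \cdot R^\phi(B_t - \sqrt{2}t, t) e^{\sqrt{2}(B_t - \sqrt{2}t)} \ind{\tilde\tau_\phi > t} = R^\phi(B_t - \sqrt{2}t, t) \ind{\tilde\tau_\phi > t} e^{\sqrt{2}B_t - t} = R^\phi(0,0)\, V_t,
\]
where $V$ is the $\Prob$-mean-one martingale of Lemma \ref{lm:1dim_change}. Hence the path functional $w_t$ inside the summand of $Z_t^\phi$ satisfies $\E[e^t w_t(B)] = R^\phi(0,0)$, and the factor $e^t$ is exactly what compensates for the exponential growth of the BBM population.

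To obtain the conditional identity, fix $s < t$ and condition on $\mathcal{G}_s$. By the branching property, each $k \in \mathcal{N}_s$ generates an independent BBM started at $X_s(k)$ at time $s$; any $k \notin \mathcal{N}_s^\phi$ contributes zero, while for $k \in \mathcal{N}_s^\phi$ the barrier indicator factors across $[0,s]$ and $[s,t]$. Applying the conditional many-to-one formula to the subtree rooted at $k$,
\[
\E\!\left[\sum_{j \succeq k,\, j \in \mathcal{N}_t^\phi} w_t(j) \,\Big|\, \mathcal{G}_s\right] = e^{t-s}\, \E\!\left[R^\phi(\tilde B_t - \sqrt{2}t, t) e^{\sqrt{2}(\tilde B_t - \sqrt{2}t)} \ind{\tilde B_u \leq \sqrt{2}u + \phi(u),\, u \in [s,t]}\right],
\]
where $\tilde B$ is a Brownian motion with $\tilde B_s = X_s(k)$. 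A Girsanov transform absorbing the $e^{\sqrt{2}(\tilde B_t - \sqrt{2}t)}$ factor, combined with the natural shifted version of Lemma \ref{lm:R_def} at the starting point $(X_s(k) - \sqrt{2}s, s)$, identifies this single-particle expectation as $e^{-(t-s)} R^\phi(X_s(k) - \sqrt{2}s, s) e^{\sqrt{2}(X_s(k) - \sqrt{2}s)}$. Cancelling with the prefactor $e^{t-s}$ and summing over $k \in \mathcal{N}_s^\phi$ yields $\E[Z_t^\phi \mid \mathcal{G}_s] = Z_s^\phi$.

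The only calculation requiring real care is the Girsanov--Markov shift in the single-particle expectation; the interchange of sum, expectation, and limits is controlled throughout by the linear bound $R^\phi(x,t) \leq C(1 + \phi(t) - x)$ of Lemma \ref{lm:R_def} and standard Gaussian tail estimates, so there is no genuine analytic obstacle beyond this bookkeeping.
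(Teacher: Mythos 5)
Your proposal is correct and takes essentially the same route as the paper: both reduce to a single-particle computation via the branching property and the many-to-one lemma, and then close the argument by invoking the martingale property of $V_t$ from Lemma~\ref{lm:1dim_change} (which you rephrase as a Girsanov shift combined with the $R^\phi$-harmonicity of Lemma~\ref{lm:R_def}, but this is precisely what Lemma~\ref{lm:1dim_change} packages). The only cosmetic difference is that the paper conditions on $\mathcal{F}_t$ and looks forward to time $t+s$, whereas you condition on $\mathcal{G}_s$ and look at time $t>s$; the content is identical.
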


\begin{proof}
We first note that by definition, $\E Z_0^\phi = R^\phi(0,0)$, and that for all $t,x$, we have $R^\phi(x,t) \geq 0$. We thus only need to check that $Z_t^{\phi}$ is a martingale. By the branching property, for all $s,t \geq 0$ we have
\[
  \E(Z_{t+s}^\phi \ | \  \mathcal{F}_t) = \sum_{j \in \mathcal{N}_t^\phi} G_s(X_t(j)),
\]
where we have set
\begin{align*}
  & G_s(x) \\
  := {} & \E\left( \sum_{j \in \mathcal{N}_s^{\sqrt{2}t + \phi_t - x}} R^\phi(X_s(j)+x - \sqrt{2}(t+s),t+s) e^{\sqrt{2}( X_s(j) + x  - \sqrt{2}(t+s))}\right)\\
  = {} & e^{\sqrt{2} (x - \sqrt{2} t)} \E\Bigg(\sum_{j \in \mathcal{N}_s^{\sqrt{2}t + \phi_t - x}} R^\phi(X_s(j)+x  - \sqrt{2}(t+s) ,t+s) \\
  & \qquad \qquad \qquad \qquad \qquad \qquad \qquad \qquad \qquad \qquad \cdot e^{\sqrt{2}( X_s(j)  - \sqrt{2}s)} \Bigg)\\
  = {} & e^{\sqrt{2} (x - \sqrt{2} t)} e^{s} \E \Big( R^\phi(B_s + x - \sqrt{2}(t+s), t+s) \\
  & \qquad \qquad \qquad \qquad \qquad \qquad \qquad \cdot e^{\sqrt{2} B_s - 2 s} \ind{\forall u \leq s, B_u + x - \sqrt{2}t \leq \sqrt{2} u + \phi_t(u)} \Big),
\end{align*}
by the many-to-one lemma (a corollary of Lemma 1 in \cite{HR2017}). Thus by Lemma~\ref{lm:1dim_change} we obtain
\[
  G_s(x) = e^{\sqrt{2} (x - \sqrt{2} t)} R^\phi(x - \sqrt{2}t,t),
\]
from which we deduce that $\E(Z_{t+s}^\phi \ | \ \mathcal{F}_t) = Z_t^\phi$ a.s., completing the proof.
\end{proof}

\subsection{Construction of \texorpdfstring{$(Z_t^{\phi}(\theta), t\ge 0)$}{the killed derivative martingale}: radial shaving}

We may now turn to our main object of interest : the $d$-dimensional branching Brownian motion $X_t=(X_t(i), i\in \mathcal{N}_t)$. Recall that this is a $d$-dimensional branching particle system in which particles move according to i.i.d. Brownian motions and split into two at rate one. For a direction $\theta \in \S^{d-1}$ recall that
\begin{align*}
Z_t(\theta) = \sum_{j \in \mathcal{N}_t} (\sqrt{2} t - \sp{X_t(j)}{\theta}) e^{\sqrt{2} (\sp{X_t(j)}{\theta} - \sqrt{2} t)}.
\end{align*}

We now introduce the shaved martingale $Z^\phi$, where the shaving is done along a curve $\phi$ satisfying \eqref{eqn:hypPhi}. Set $\mathcal{N}^{\phi,\theta}_t = \{ j \in \mathcal{N}_t : \sp{X_s(j)}{\theta} \leq \sqrt{2} s + \phi(s) , s \leq t\}$ for $t \geq 0$ and $\phi \in \S^{d-1}$.

We now set
\begin{align}
Z_t^{\phi}(\theta) := {} & \sum_{j \in \mathcal{N}^{\phi, \theta}_t} R^{\phi}(\sp{X_t(j)}{\theta} - \sqrt{2}t, t)e^{\sqrt{2}( \sp{X_t(j)}{\theta} - \sqrt{2}t)}.
\label{eq:non_linear_shaving_theta}
\end{align}
The function $\phi$ will be chosen to grow fast enough to guarantee that
\[
  \lim_{A \to \infty} \P\left( \forall t \geq 0,   \cap_{\theta \in \S^{d-1}} \mathcal{N}^{\phi \vee A,\theta}_t = \mathcal{N}_t\right) = 1.
\]
We will show in Section \ref{subsec:ddim_displacement} that choosing $\phi$ growing faster than $\frac{d-1}{2\sqrt{2}} \log t$ as $t \to \infty$ is enough.

In Section \ref{subsec:uniform_integrability} we prove (using classical spinal decomposition techniques along the lines of \cite{Kyprianou2004} and \cite{Roberts2010}) that for all measurable bounded functions $f$ the process $(\langle Z^\phi_t,f \rangle, t \geq 0)$ is a uniformly integrable martingale. We then use convergence of these martingales in Section \ref{subsec:simultaneous_limit} to show that $\lim_{t \to \infty} Z^\phi_t(\theta)$ exists for almost all $\theta \in \S^{d-1}$ almost surely.  Finally, we complete the proof of Theorem \ref{thm:main} by using that with high probability $Z$ and $Z^\phi$ coincide asymptotically as $t \to \infty$, for which we shall apply Lemma \ref{lm:linearGrowth}.

\subsection{Bounds on the maximal displacement of the BBM}
\label{subsec:ddim_displacement}

We prove here that with high probability all particles in the multidimensional BBM are at all times $t$ within a ball of radius $\sqrt{2} t + \frac{d-1}{2\sqrt{2}} \log (t+1) + A$. First, recall the following lemma due to Mallein:
\begin{lemma}[\cite{Mallein2015}, Lemma 3.1]
Let
\begin{align*}
r_s^{t,y} := \sqrt{2}s + \frac{d-1}{2\sqrt{2}}\log(s+y) - \frac{3}{2\sqrt{2}} \log \frac{t+1}{t-s+1} + y.
\end{align*}
Then there exists $C > 0$ such that for any $t \geq 1$ and $y \in [1, \sqrt{t}]$
\begin{align*}
\Prob\left(\exists j 
\in \mathcal{N}_t, \exists s \leq t : ||X_s(j)|| \geq r_s^{t,y}\right) \leq Cye^{-\sqrt{2}y}.
\end{align*}
\label{fact:Mallein3.1}
\end{lemma}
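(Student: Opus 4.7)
I would attack the statement via a first-moment argument, reducing it to a first-passage estimate for a single $d$-dimensional Brownian motion. By the branching structure, the event of interest coincides with $\{\exists s \leq t,\ \exists j \in \mathcal{N}_s : \|X_s(j)\| \geq r_s^{t,y}\}$. Setting $\tau := \inf\{s \geq 0 : \exists j \in \mathcal{N}_s,\ \|X_s(j)\| \geq r_s^{t,y}\}$, the particles first in their ancestral line to cross the barrier form a stopping line; Markov's inequality and the stopping-line extension of the many-to-one lemma (a direct consequence of optional stopping applied to the martingale underlying Lemma~1 of \cite{HR2017}) give
\[
\P(\tau \leq t) \leq \E\bigl[e^{\sigma}\ind{\sigma \leq t}\bigr], \qquad \sigma := \inf\{s \geq 0 : \|B_s\| \geq r_s^{t,y}\},
\]
where $B$ is a standard Brownian motion in $\R^d$ started at $0$.

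\textbf{Discretisation and Gaussian tail.} I would then discretise by writing $\E[e^{\sigma}\ind{\sigma \leq t}] \leq C \sum_{k=0}^{\lfloor t\rfloor} e^{k+1}\P(\sigma \in [k,k+1])$ and using the Markov property at time $k$ to absorb the $O(\sqrt{\log t})$ fluctuations of $B$ over a unit interval, reducing each summand to a bound on $e^k\P(\|B_k\| \geq r_k^{t,y} - c,\ \sigma > k)$ for a suitable constant $c$. Plugging the $d$-dimensional Gaussian tail $\P(\|B_k\| \geq a) \leq C(a/\sqrt k)^{d-2}e^{-a^2/(2k)}$ at $a = r_k^{t,y}$ and expanding
\[
\frac{(r_k^{t,y})^2}{2k} \;=\; k \;+\; \sqrt 2\bigl(\tfrac{d-1}{2\sqrt 2}\log(k+y) \;-\; \tfrac{3}{2\sqrt 2}\log\tfrac{t+1}{t-k+1} \;+\; y\bigr) \;+\; o(1),
\]
valid when $y \leq \sqrt t$, the $e^k$ prefactor is cancelled and the polynomial pieces combine so that $k^{(d-2)/2}(k+y)^{-(d-1)/2} \sim k^{-1/2}$. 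What survives per term is of order $C y e^{-\sqrt 2 y}\, k^{-1/2}\bigl(\tfrac{t+1}{t-k+1}\bigr)^{3/2}$, which is not yet summable.

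\textbf{Ballot correction and summation.} The divergence as $k \to t$ is precisely where the stay-below constraint $\sigma > k$ must be used: the barrier $r_s^{t,y}$ dips by roughly $\tfrac{3}{2\sqrt 2}\log t$ near $s = t$, so a genuine first-crossing at time close to $t$ forces the trajectory to have lain below the \emph{higher} barrier values at earlier times. Applying a Girsanov tilt that absorbs the $\sqrt 2$ drift (equivalently, decomposing $B$ into its radial Bessel process and its independent angular component) converts the stay-below event into a Bramson-style ballot estimate for a drifted Brownian motion below a logarithmically perturbed linear barrier on $[0,k]$. This ballot contributes an extra factor of order $\bigl((t-k+1)/(t+1)\bigr)^{3/2}$, exactly cancelling the divergent piece, together with a further gain proportional to $y$ coming from the initial displacement of the barrier above $0$. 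After these two cancellations, summation over $k \in \{0,\ldots,\lfloor t\rfloor\}$ converges and produces the claimed bound $C y e^{-\sqrt 2 y}$.

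\textbf{Main obstacle.} The technical heart of the argument is the ballot estimate: one must execute the Girsanov change of measure cleanly enough that the $d$-dimensional first-passage probability factorises as a Gaussian tail times a one-dimensional stay-below probability, with the logarithmic corrections matching those built into $r_s^{t,y}$. The curve has been calibrated precisely for this cancellation—the $\tfrac{d-1}{2\sqrt 2}\log(s+y)$ piece offsets the $(d-1)$-dimensional angular volume and the $-\tfrac{3}{2\sqrt 2}\log\tfrac{t+1}{t-s+1}$ piece matches the ballot deficit—so losing even a logarithmic factor anywhere would destroy the bound. Recovering the \emph{linear} dependence on $y$ (rather than a crude $e^{-(\sqrt 2 - \delta)y}$) requires keeping the starting height and endpoint distribution in the ballot estimate, and the restriction $y \in [1,\sqrt t]$ is exactly what makes the quadratic correction $u^2/(2k)$ in the exponent asymptotically negligible.
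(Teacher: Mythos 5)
This lemma is stated in the paper only as a citation of \cite{Mallein2015}; the paper contains no proof of its own, so there is nothing in-text to compare against. Your plan, however, does follow the same route as Mallein's original argument (first-moment / many-to-one reduction at a stopping line, discretisation of the first-passage time, Gaussian tail plus a one-dimensional ballot estimate obtained after peeling off the Bessel drift of the radial component), and the two logarithmic terms in the curve are correctly identified as matching, respectively, the angular-volume/Bessel-drift factor and the Bramson time-dependent correction. The scheme is sound.

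The issue is in the bookkeeping of the final summation, which as written does not close. After the Gaussian estimate, the $k$-th term should be of order $k^{-1/2}\bigl(\tfrac{t+1}{t-k+1}\bigr)^{3/2}e^{-\sqrt 2 y}$ — the factor $y$ you insert at that stage does not come from the tail computation (for $k \gtrsim y$ one has $(k+y)\sim k$ and the $y$-dependence resides entirely in $e^{-\sqrt 2 y}$). The ballot estimate for a trajectory forced to start a distance $\sim y$ below the barrier, end near it at time $k$, and stay below the Bramson-shifted line on $[0,k]$ contributes not only the Bramson compensation $\bigl(\tfrac{t-k+1}{t+1}\bigr)^{3/2}$ and the initial-displacement gain $\sim y$, but also the crucial $1/k$ factor of a Brownian bridge ballot; you omit this $1/k$. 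Without it your per-term bound after cancellations is $y\,k^{-1/2}e^{-\sqrt 2 y}$, and the sum over $k\le t$ gives $y\sqrt t\,e^{-\sqrt 2 y}$, which is far weaker than the claimed $y\,e^{-\sqrt 2 y}$. With the $1/k$ in place, the per-term bound becomes $y\,k^{-3/2}e^{-\sqrt 2 y}$, the sum converges uniformly in $t$, and the bound follows. A secondary point: the parenthetical identification of "Girsanov tilt that absorbs the $\sqrt 2$ drift" with "decomposing $B$ into its radial Bessel process and its angular part" conflates two genuinely different devices; the latter is what actually lets you reduce the stay-below constraint on $\|B_s\|$ to a one-dimensional ballot problem (the Bessel drift $\tfrac{d-1}{2}\int_0^s\tfrac{\dd u}{\|B_u\|}\approx \tfrac{d-1}{2\sqrt 2}\log s$ is what absorbs the $\log(s+y)$ term in the barrier), and it deserves to be made explicit rather than folded into a Girsanov remark.
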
 

We use Lemma \ref{fact:Mallein3.1} to prove the following result.
\begin{lemma}
Let $\tilde r(s) := \sqrt{2}s + \frac{d-1}{2\sqrt{2}} \log (1 + s)$. For any $\epsilon > 0$ there exists $C_\epsilon$ such that
\begin{align*}
\Prob\left(\exists t \geq 0, \exists j 
\in \mathcal{N}_t : ||X_t(j)|| \geq \tilde r(t)  + C_\epsilon\right) \leq \epsilon.
\end{align*}
\label{lm:push_barrier}
\end{lemma}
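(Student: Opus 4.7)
The plan is to discretize time along the integers, apply Lemma \ref{fact:Mallein3.1} at each $t = n$ with a carefully chosen parameter $y_n$, then union-bound. The key observation is that the barrier $r_s^{t,y}$ contains the negative term $-\tfrac{3}{2\sqrt{2}} \log \tfrac{t+1}{t-s+1}$, which becomes very negative as $s \to t$; letting $y = y_n$ grow logarithmically in $n$ produces a cancellation between this term and $y_n$, leaving only a uniformly bounded discrepancy with $\tilde r$.

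Concretely, I would fix a large integer $n_0 = n_0(\epsilon)$ and set $y_n := \tfrac{3}{2\sqrt{2}} \log n + K$ for $n \geq n_0$, where $K = K(\epsilon) > 0$ is to be chosen. For $n_0$ large enough one has $y_n \in [1, \sqrt n]$, so Lemma \ref{fact:Mallein3.1} applies and
\[
\Prob\!\left(\exists j \in \mathcal{N}_n, s \leq n : \|X_s(j)\| \geq r_s^{n, y_n}\right) \leq C y_n e^{-\sqrt{2} y_n} \leq C' n^{-3/2}(\log n + K) e^{-\sqrt{2} K}.
\]
Summing over $n \geq n_0$ yields a total bound $C'' e^{-\sqrt{2} K}$, which is made smaller than $\epsilon/2$ by taking $K$ large.

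On the complement of all these bad events, every particle $j' \in \mathcal{N}_s$ with $s \in [n-1, n]$ has at least one descendant $j \in \mathcal{N}_n$, so $\|X_s(j')\| < r_s^{n, y_n}$. A direct computation, using that $\tfrac{3}{2\sqrt 2} \log n - \tfrac{3}{2\sqrt 2}\log\tfrac{n+1}{n-s+1} = \tfrac{3}{2\sqrt 2}\log\tfrac{n(n-s+1)}{n+1}$, gives
\[
r_s^{n, y_n} - \tilde r(s) = \frac{d-1}{2\sqrt{2}} \log \frac{s + y_n}{1+s} + \frac{3}{2\sqrt{2}} \log \frac{n(n-s+1)}{n+1} + K.
\]
For $s \in [n-1, n]$ with $n \geq n_0$, we have $n - s + 1 \in [1, 2]$ and $(s+y_n)/(1+s) \leq 1 + y_n/n$ is bounded, so the two logarithmic terms are $\leq c_d$, a constant depending only on $d$. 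Thus $r_s^{n, y_n} - \tilde r(s) \leq K + c_d$ uniformly on this range.

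The remaining regime $s \in [0, n_0]$ is handled by a single application of Lemma \ref{fact:Mallein3.1} at $t = n_0$ with a constant parameter $y_0 = y_0(\epsilon)$, large enough for both the probability bound $C y_0 e^{-\sqrt{2} y_0}$ to be $\leq \epsilon/2$ and $r_s^{n_0, y_0} \leq \tilde r(s) + C_\epsilon$ to hold throughout $s \leq n_0$. Setting $C_\epsilon := K + c_d$ (possibly enlarged to absorb the small-$s$ contribution) completes the argument. The main obstacle is identifying the growth rate $y_n \asymp \tfrac{3}{2\sqrt{2}} \log n$ that triggers the cancellation in the third step; once that scaling is in hand, the rest is routine bookkeeping.
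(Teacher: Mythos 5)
Your argument is correct, but it takes a substantially more elaborate route than the paper's and misses a simplification that collapses the whole proof. The paper's observation is that the $t$-dependent term $-\frac{3}{2\sqrt{2}}\log\frac{t+1}{t-s+1}$ in $r_s^{t,y}$ is nonpositive for $0 \leq s \leq t$, so $r_s^{t,y} \leq \sqrt{2}s + \frac{d-1}{2\sqrt{2}}\log(s+y) + y$ always. Consequently the event $\{\exists s \leq t,\, j\in\mathcal{N}_s : \|X_s(j)\| \geq \sqrt{2}s + \frac{d-1}{2\sqrt{2}}\log(s+y) + y\}$ is contained in the event controlled by Lemma~\ref{fact:Mallein3.1}, and a single application with a fixed $y$ (large enough that $Cye^{-\sqrt{2}y} < \epsilon$) gives a bound uniform in $t$; letting $t \to \infty$ and taking $C_\epsilon := \sup_{t\geq 0}\bigl(\frac{d-1}{2\sqrt{2}}\log(t+y) + y - \frac{d-1}{2\sqrt{2}}\log(t+1)\bigr) < \infty$ finishes the proof. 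There is no need to cancel the $\log\frac{t+1}{t-s+1}$ term because one never has to estimate it at all --- one simply drops it. Your approach, in contrast, fights that term head on: you discretize at integer times, tune $y_n \asymp \frac{3}{2\sqrt{2}}\log n$ so that the two logarithms approximately cancel on $[n-1,n]$, union-bound over $n$, and then handle the small-$t$ regime separately. This does work (the series $\sum_n (\log n + K) n^{-3/2} e^{-\sqrt{2}K}$ converges and vanishes as $K\to\infty$, and the barrier comparison on $[n-1,n]$ is uniform), but it is far heavier machinery than the problem requires. If you ever face a barrier where the $t$-dependent correction has the wrong sign so that the simple domination fails, your discretization-plus-cancellation scheme would be the right tool; here, noticing the sign of the correction is the shorter path.
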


\begin{proof}
Observe first that by Lemma \ref{fact:Mallein3.1}, for any $y > 0$ and $t \geq 0$, we have
\begin{multline*}
\P\left(\exists s \leq t, \exists j \in \mathcal{N}_s : ||X_s(j)|| \geq \sqrt{2}s + \frac{d-1}{2\sqrt{2}}\log(s+y) + y \right)\\
  \leq \P\left( \exists s \leq t, \exists j \in \mathcal{N}_s : ||X_s(j)|| \geq r_s^{t,y} \right)
  \leq Cye^{-\sqrt{2}y}.
\end{multline*}
Hence, choosing $y$ large enough such that $C y e^{-\sqrt{2}y} < \epsilon$ and letting $t \to \infty$, we deduce that
\[
  \P( \exists s \geq 0, j \in \mathcal{N}_s :  ||X_s(j)|| \geq \sqrt{2}s + \frac{d-1}{2\sqrt{2}}\log(s+y) + y ) \leq \epsilon.
\]
To complete the proof, it is therefore enough to choose $C_\epsilon$ as
\[
  \sup_{t \geq 0}  \frac{d-1}{2\sqrt{2}}\log(t+y) + y - \frac{d-1}{2\sqrt{2}} \log (t + 1) <\infty. \qedhere
\]
\end{proof}

\subsection{Uniform integrability of \texorpdfstring{$(Z_t^\phi,t\ge 0)$}{the killed derivative martingale}}
\label{subsec:uniform_integrability}
Let $f$ be a non-negative function such that $\int_{\S^{d-1}} f(\theta) \sigma(\dd \theta) = 1$. By Fubini's theorem, it is a straightforward calculation to verify that the process defined by
\[
  \langle Z_t^\phi,f \rangle = \int_{\S^{d-1}} Z_t^\phi(\theta) f(\theta) \sigma(\dd \theta)
\]
is a non-negative martingale. To prove its uniform integrability we use a spinal decomposition method. This technique, pioneered by Lyons, Pemantle and Peres \cite{LPP1995} for studying Galton-Watson processes, and adapted by Lyons \cite{Lyons1997} to spatial branching settings, consists in an alternative description of the law of the branching Brownian motion biased by the martingale $\langle Z_t^\phi,f \rangle$. More precisely, we define
\[
  \left. \frac{\dd \P^f}{\dd \P} \right|_{\mathcal{G}_t} := R^\phi(0,0)^{-1}\langle Z_t^\phi, f \rangle.
\]
The spinal decomposition consists in a construction of the BBM under the law $\P^f$, where a distinguished particle, called the spine, moves and reproduces differently to typical BBM particles. The offspring of that spine particle then start independent copies of the original BBM with law $\P$, from their birth time and position.

Before presenting the spinal decomposition for the branching Brownian motion, we introduce the law of the multi-dimensional Brownian motion biased by a martingale similar to the one introduced in Lemma \ref{lm:1dim_change}. This will allow us to describe the trajectory of the spine under the biased law $\P^f$.

Let $B$ be a Brownian motion in $\R^d$. For all $\theta \in \S^{d-1}$ we define a non-negative martingale $(V_t(\theta), t\geq 0)$ as
\[
  V_t(\theta) := \frac{R^\phi(\sp{B_t}{\theta} - \sqrt{2}t,t)}{R^\phi(0,0)} \ind{\tau_\phi(\theta) > t} e^{\sqrt{2}\sp{B_t}{\theta} - t},
\]
where $\tau_\phi(\theta) :=  \inf \{u > 0 \ : B_u \cdot \theta  - \sqrt{2}u \geq \phi(u) \}$. Writing $B^{(1)} = \sp{B}{\theta}$ and $B^{(2)}$ for the projection of $B$ on $\theta^\perp$, we note that these are two independent Brownian motions. Applying Lemma \ref{lm:1dim_change} to $B^{(1)}$, we deduce that under the law defined as
\[
  \left. \frac{\dd \P^{V(\theta)}}{\dd \P} \right|_{\mathcal{G}_t} := V_t(\theta)
\]
the process $B$ is a $d$-dimensional Brownian motion with drift $\sqrt{2} \theta$, conditioned on $\sp{B_t}{\theta} \leq \sqrt{2} t + \phi(t)$ for all $t \geq 0$ (in the sense of Doob).

The key point of Theorem \ref{thm:main} is to consider several directions at the same time. To do so, we will consider integrated versions of the martingale $V(\theta)$. Given $f$ a non-negative function satisfying $\int_{\S^{d-1}} f(\theta) \sigma(\dd \theta) = 1$, we set
\begin{align*}
U_t := \langle V_t, f \rangle
\end{align*}
and we define the measure $\P^U$ by 
\begin{align*}
\left. \frac{\dd\P^U}{\dd\P}\right|_{\mathcal{G}_t} := U_t.
\end{align*}

\begin{lemma}
Let $f$ be a non-negative function with $\int_{\S^{d-1}} f(\theta) \sigma(\dd \theta) = 1$, then the process $U$ is a non-negative martingale. Moreover, setting $\theta_0$ a random variable in $\S^{d-1}$ with law $f(\theta) \sigma(\dd \theta)$ and writing $(B_t)$ for a process with law $\P^{V(\theta_0)}$  conditionally on $\theta_0$, the process $(B_t, t \geq 0)$ has law $\P^U$.
\end{lemma}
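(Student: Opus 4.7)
The plan is to check two things: first, that $U$ is a non-negative martingale with mean $1$; second, that the law $\Prob^U$ coincides with the mixture obtained by first sampling $\theta_0$ with density $f$ (with respect to the surface measure) and then running $B$ under $\Prob^{V(\theta_0)}$. Both will follow directly from Tonelli/Fubini once we notice that $V(\theta)$ has already been handled pointwise in Lemma \ref{lm:1dim_change}.

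For the martingale property, note that for each fixed $\theta \in \S^{d-1}$ the process $(V_t(\theta), t \geq 0)$ is a non-negative $(\mathcal{G}_t)$-martingale with $\E V_t(\theta) = 1$. Indeed, writing $B^{(1)} = \sp{B}{\theta}$, the process $V_t(\theta) = R^\phi(B^{(1)}_t - \sqrt 2 t, t) R^\phi(0,0)^{-1} \ind{\tau_\phi(\theta)>t} e^{\sqrt 2 B^{(1)}_t - t}$ is, up to the constant factor, exactly the one-dimensional martingale from Lemma \ref{lm:1dim_change} applied to the Brownian motion $B^{(1)}$, which is independent of the filtration generated by the projection of $B$ on $\theta^\perp$. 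Non-negativity of $U$ is then immediate, $U_0 = \int_{\S^{d-1}} f(\theta) \sigma(\dd\theta) = 1$, and for $s \leq t$ Tonelli's theorem gives
\[
\E[U_t \mid \mathcal{G}_s] = \int_{\S^{d-1}} \E[V_t(\theta) \mid \mathcal{G}_s]\, f(\theta)\, \sigma(\dd\theta) = \int_{\S^{d-1}} V_s(\theta) f(\theta)\, \sigma(\dd\theta) = U_s,
\]
the exchange of conditional expectation and integral being justified by non-negativity.

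For the identification of $\Prob^U$, fix $t \geq 0$ and let $F$ be a bounded non-negative $\mathcal{G}_t$-measurable random variable. By definition of $\Prob^U$, Tonelli, and the definition of $\Prob^{V(\theta)}$,
\[
\E^U[F] = \E[F\, U_t] = \int_{\S^{d-1}} \E[F\, V_t(\theta)]\, f(\theta)\, \sigma(\dd\theta) = \int_{\S^{d-1}} \E^{V(\theta)}[F]\, f(\theta)\, \sigma(\dd\theta).
\]
On the other hand, if $\theta_0$ has law $f(\theta)\sigma(\dd\theta)$ and, conditionally on $\theta_0$, the process $B$ has law $\Prob^{V(\theta_0)}$, then by definition of the conditional law,
\[
\E[F] = \E\bigl[\E[F \mid \theta_0]\bigr] = \int_{\S^{d-1}} \E^{V(\theta)}[F]\, f(\theta)\, \sigma(\dd\theta),
\]
which matches the previous display. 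As this holds for every $t$ and every bounded $\mathcal{G}_t$-measurable $F$, the two laws coincide on $\bigcup_t \mathcal{G}_t$, hence on $\mathcal{G}_\infty$ by a standard monotone class argument.

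No step should pose any real obstacle: the only substantive input is the one-dimensional Lemma \ref{lm:1dim_change} together with the independence of the components of a $d$-dimensional Brownian motion across an orthogonal decomposition; the rest is bookkeeping with Tonelli's theorem, which is available because $V_t(\theta) \geq 0$ and $f \geq 0$. One minor point worth flagging is measurability of $\theta \mapsto V_t(\theta)$, which is clear from continuity of $R^\phi$ in its spatial argument and of $\theta \mapsto \sp{B_s}{\theta}$ together with right-continuity of the indicator $\ind{\tau_\phi(\theta) > t}$ in $\theta$ on the event $\{\tau_\phi(\theta) > t\}$; this makes the integrals above well defined.
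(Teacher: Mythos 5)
Your proof is correct and follows essentially the same route as the paper: both establish the martingale property of $U$ and the mixture identification of $\P^U$ by combining the pointwise facts about $V(\theta)$ from Lemma~\ref{lm:1dim_change} with Fubini/Tonelli, with you supplying a bit more bookkeeping (conditional Fubini, extension to $\mathcal{G}_\infty$, measurability in $\theta$) than the paper spells out.
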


\begin{proof}
The process $U$ is a martingale using Fubini's theorem. Additionally, for all $t \geq 0$ and $G \in \mathcal{G}_t$ we have
\begin{align*}
  \P^U(G) = \int_G \langle V_t, f \rangle \dd \P = \left\langle \int_G V_t \dd \P, f \right\rangle = \int_{\S^{d-1}} \P^{V(\theta)}(G) f(\theta) \dd \theta,
\end{align*}
which justifies the description of $B$ under the law $\P^U$.
\end{proof}

Observe that one can decompose
\begin{align*}
\langle Z^\phi_t, f \rangle =  R^\phi(0,0) \sum_{j \in \mathcal{N}_t} U_t(j) e^{-t},
\end{align*}
where
\begin{align*}
U_t(j) := \left\langle \frac{R^\phi(\sp{X_t(j)}{\theta} - \sqrt{2}t,t)}{R^\phi(0,0)} \ind{\forall u < t, \sp{X_u(j)}{\theta} -  \sqrt{2}t < \phi(u)}e^{\sqrt{2}\sp{X_t(j)}{\theta} - t}, f \right\rangle.
\end{align*}
Thanks to this decomposition we can describe the BBM under the law $\P^f$ in terms of a spinal decomposition, which follows e.g. from  \cite[Lemma 6.7]{HH2006}.
\begin{lemma}
Let $f$ be a non-negative function with $\int_{\S^{d-1}} f(\theta) \sigma(\dd \theta) = 1$. The law of the BBM under $\P^f$ can be constructed as follows
\begin{enumerate}
	\item we pick a direction $\theta_0$ according to a random variable on $\S^{d-1}$ with density $f(\theta)\sigma(\dd \theta)$;
	\item conditionally on this direction we sample a trajectory $(\Xi_t)$ with law $\P^{V(\theta_0)}$ that will be followed by the spine particle;
	\item the spine particle creates offspring at rate $2$;
	\item every child of the spine then starts an independent standard BBM with law $\P$.
\end{enumerate}
\label{lm:spinaldecomposition}
\end{lemma}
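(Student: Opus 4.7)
The plan is to reduce Lemma \ref{lm:spinaldecomposition} to the classical spinal decomposition for branching Brownian motion as formulated in \cite[Lemma 6.7]{HH2006}, so that the only work required is to identify the correct one-particle martingale and check the hypotheses of that general result.

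The starting point is the additive decomposition already displayed above,
\begin{align*}
\frac{1}{R^\phi(0,0)}\langle Z_t^\phi, f\rangle = \sum_{j \in \mathcal{N}_t} U_t(j) e^{-t}.
\end{align*}
For binary branching at rate $1$, we have mean offspring number $m=2$, so the prefactor $e^{-t} = e^{-(m-1) t}$ is precisely the normalisation required by the Hardy--Harris framework for an additive martingale built from a one-particle weight $U_t(j)$. Thus the biasing martingale $R^\phi(0,0)^{-1}\langle Z_t^\phi,f\rangle$ is exactly of the form to which the general spinal machinery applies, with spatial weight $U_t(j)$.

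First I would verify that along a single Brownian trajectory the process $U_t$ is a positive $\P$-martingale of mean one and identify the associated $h$-transform. But both facts are precisely the content of the preceding lemma: $U = \langle V,f\rangle$ is a positive martingale of mean one, and under the change of measure $\P^U$ one samples $\theta_0$ with density $f(\theta)\sigma(\dd\theta)$ and then follows a $\P^{V(\theta_0)}$ trajectory conditionally on $\theta_0$. Feeding this single-particle martingale into the Hardy--Harris construction yields the full description of the BBM under $\P^f$: (i) a spine whose spatial motion has law $\P^U$, matching items (1)--(2) of the statement; (ii) branching along the spine at the accelerated rate $m\cdot 1 = 2$, matching item (3); (iii) a size-biased offspring number at each spine branching which, because $L\equiv 2$ is deterministic, still equals $2$, with a uniformly chosen child continuing as the spine; and (iv) the non-spine child initiating an independent BBM with law $\P$, matching item (4).

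The only genuinely new point compared to standard single-direction spinal constructions (as in \cite{Kyprianou2004} or \cite{Roberts2010}) is that the single-particle weight $U_t$ is itself obtained by integrating a family of martingales $V_t(\theta)$ over $\theta$. The main obstacle to overcome, and essentially the only bookkeeping that has to be done by hand, is to check that this integration respects the additive-martingale structure: since all $Z_t^\phi(\theta)$ are built from sums over the \emph{same} particle system $\mathcal{N}_t$, Fubini's theorem allows one to exchange $\sum_{j}$ and $\int \cdot\, f(\theta)\sigma(\dd\theta)$, producing the additive-martingale form above with a bona fide single-particle weight $U_t(j)$. Once this exchange is justified, the branching rate $2$, the binary split, the uniform selection of the spine child, and the independence of off-spine subtrees are immediate consequences of \cite[Lemma 6.7]{HH2006}.
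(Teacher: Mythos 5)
Your proposal is correct and follows essentially the same route as the paper: both identify the additive form $R^\phi(0,0)^{-1}\langle Z_t^\phi,f\rangle = \sum_{j\in\mathcal{N}_t}U_t(j)e^{-t}$ with single-particle weight $U_t(j)$, verify via the preceding lemma that $U$ is a mean-one martingale whose associated change of measure produces the mixed spine law, and then invoke the general Hardy--Harris spinal decomposition (their Lemma~6.7) to obtain the rate-$2$ branching along the spine and independent copies of $\P$ off the spine. The paper treats this as immediate from the cited reference and gives no further proof, so your additional bookkeeping (the Fubini exchange justifying that $U_t(j)$ is a genuine one-particle weight, and the observation that size-biasing a deterministic $L\equiv 2$ changes nothing) simply makes explicit what the paper leaves implicit.
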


An analogous decomposition in dimension one was given in \cite{CR1988} or in \cite{Kyprianou2004}. We are now ready to present the key lemma that states the uniform integrability of $Z_t^\phi$.
\begin{lemma}
Let $\phi$ be a function satisfying \eqref{eqn:hypPhi}. For any bounded measurable function $f$ the martingale $\left(\langle Z^\phi_t, f \rangle\right)_{t \geq 0}$ is uniformly integrable.
\label{lm:uniform_integrability_shaved}
\end{lemma}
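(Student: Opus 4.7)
By splitting $f = f_+ - f_-$ and rescaling, it suffices to treat $f \geq 0$ with $\int_{\S^{d-1}} f(\theta)\sigma(\dd\theta) = 1$. Set $M_t := \langle Z_t^\phi, f \rangle/R^\phi(0,0)$, a non-negative $\P$-martingale with $\E M_0 = 1$. A classical dichotomy for non-negative martingales asserts that $(M_t)$ is uniformly integrable if and only if $\liminf_{t\to\infty} M_t < \infty$ almost surely under the biased law $\P^f$ defined by $\dd\P^f/\dd\P|_{\mathcal{G}_t} = M_t$. Under $\P^f$, Lemma~\ref{lm:spinaldecomposition} provides a spinal decomposition: a spine direction $\theta_0$ is drawn with density $f$, the spine $\Xi$ then evolves with law $\P^{V(\theta_0)}$, branches at rate $2$, and each non-spine offspring launches an independent BBM. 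This yields
\[
M_t = S_t + \sum_{s \in \mathcal{B}_t} T^{(s)}_t,
\]
where $\mathcal{B}_t$ is the set of spine branching times in $[0,t]$, $S_t$ is the spine particle's contribution, and each $T^{(s)}_t$ is the contribution of the subtree born at $(\Xi_s, s)$. Conditionally on $(\Xi, \mathcal{B})$, the processes $(T^{(s)}_t)_{t \geq s}$ are independent non-negative martingales starting from
\[
T^{(s)}_s = \frac{1}{R^\phi(0,0)}\int_{\S^{d-1}} R^\phi(\sp{\Xi_s}{\theta} - \sqrt 2 s, s)\, \ind{\sp{\Xi_s}{\theta} \leq \sqrt 2 s + \phi(s)}\, e^{\sqrt 2(\sp{\Xi_s}{\theta} - \sqrt 2 s)} f(\theta)\, \sigma(\dd\theta);
\]
in particular $T^{(s)}_\infty := \lim_t T^{(s)}_t$ exists and Fatou gives $\E^f[T^{(s)}_\infty \mid \Xi, \mathcal{B}] \leq T^{(s)}_s$.

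The crux is to show $\sum_{s \in \mathcal{B}} T^{(s)}_s < \infty$ $\P^f$-almost surely, which by compensating the rate-$2$ Poisson branching process reduces to $\int_0^\infty T^{(s)}_s \dd s < \infty$. For this we use Lemma~\ref{lm:Bessel_type_fluctuations}: under $\P^{V(\theta_0)}$, the spine projection satisfies $\xi_s := \sp{\Xi_s}{\theta_0} - \sqrt 2 s = -s^{1/2+o(1)}$, while the perpendicular components of $\Xi_s$ form a $(d-1)$-dimensional Brownian motion of size $O(\sqrt s)$. Expanding $\|\Xi_s\|^2 = (\sqrt 2 s + \xi_s)^2 + \|\Xi_s^\perp\|^2$, the cross term $2\sqrt 2 s\, \xi_s$ of magnitude $s^{3/2+o(1)}$ dominates the $O(s)$ perpendicular contribution, yielding $\sup_{\theta \in \S^{d-1}}(\sp{\Xi_s}{\theta} - \sqrt 2 s) = \|\Xi_s\| - \sqrt 2 s = -s^{1/2+o(1)}$. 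Thus $e^{\sqrt 2(\sp{\Xi_s}{\theta} - \sqrt 2 s)} \leq \exp(-\sqrt 2\, s^{1/2+o(1)})$ uniformly in $\theta$, and combined with the linear bound $R^\phi(x,s) \leq C(1 + \phi(s) - x) = O(s)$ from Lemma~\ref{lm:R_def}, the integrand $T^{(s)}_s$ decays super-polynomially in $s$.

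Finally, to pass from summability of $T^{(s)}_s$ to $\liminf_t M_t < \infty$, truncate by writing $\sum_{s \in \mathcal{B}_t} T^{(s)}_t = \mathcal{M}_t^{\leq N} + \mathcal{M}_t^{>N}$. The finite sum $\mathcal{M}_t^{\leq N}$ converges as $t \to \infty$ to $\sum_{s \in \mathcal{B} \cap [0,N]} T^{(s)}_\infty$, while Fatou and the martingale property yield $\E^f[\liminf_t \mathcal{M}_t^{>N} \mid \Xi, \mathcal{B}] \leq \sum_{s > N} T^{(s)}_s$, which tends to $0$ as $N \to \infty$. Passing to a subsequence $N_k \uparrow \infty$ along which $\liminf_t \mathcal{M}_t^{>N_k} \to 0$ almost surely, we conclude $\liminf_t M_t \leq \sum_{s \in \mathcal{B}} T^{(s)}_\infty < \infty$ $\P^f$-a.s., noting that the spine's own contribution $S_t$ decays to $0$ by the same exponential argument. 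The main technical difficulty is the \emph{uniform} estimate $\sup_\theta(\sp{\Xi_s}{\theta} - \sqrt 2 s) = -s^{1/2+o(1)}$, as opposed to the pointwise bound at $\theta = \theta_0$: directions in the Gaussian cone $|\theta - \theta_0| = O(1/\sqrt s)$ contribute most to $T^{(s)}_s$, and Lemma~\ref{lm:Bessel_type_fluctuations} is essential to control them.
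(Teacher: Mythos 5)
Your proposal is correct and follows essentially the same route as the paper: reduction to nonnegative normalized $f$, the Lyons--Pemantle--Peres / Durrett measure-change dichotomy, the spinal decomposition of Lemma~\ref{lm:spinaldecomposition}, and the crucial almost-sure bound $\|\Xi_s\| - \sqrt 2 s \leq -C s^{1/2-\delta}$ obtained by combining Lemma~\ref{lm:Bessel_type_fluctuations} for the $\theta_0$-component with the law of the iterated logarithm for the perpendicular $(d-1)$-dimensional Brownian motion. The only cosmetic difference is the final step: the paper applies conditional Fatou directly to $\P^f[\langle Z^\phi_t,f\rangle \mid \mathcal{F}_\infty] = S_t + \sum_{s\in\mathcal{B}_t} T^{(s)}_s$, whereas you reconstruct the same conclusion by truncating the subtree sum at level $N$ and letting $N\to\infty$; your truncation works (note $Y_N := \liminf_t \mathcal{M}_t^{>N}$ is monotone in $N$, so the subsequence extraction is unnecessary), but it is somewhat heavier than the one-line Fatou argument.
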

Before we present the proof of Lemma \ref{lm:uniform_integrability_shaved}, note that applying it in dimension one with the binary function $f$ (i.e. $f(-1) = 0$ and $f(1) = 1$) we obtain the following corollary.
\begin{corollary}
	For any $\theta \in \S^{d-1}$, $\left(Z_t^{\phi}(\theta)\right)_{t \geq 0}$ is a uniformly integrable martingale.
\end{corollary}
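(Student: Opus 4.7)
The plan is to follow the classical spine strategy for proving uniform integrability of branching martingales, as in Lyons \cite{Lyons1997} or Kyprianou \cite{Kyprianou2004}. By decomposing $f = f^+ - f^-$ and rescaling, it suffices to treat the case $f \geq 0$ with $\int_{\S^{d-1}} f(\theta) \sigma(\dd\theta) = 1$, so that the measure $\P^f$ from the discussion preceding Lemma~\ref{lm:spinaldecomposition} is a probability measure. Setting $M_t := \langle Z^\phi_t, f\rangle/R^\phi(0,0)$, the standard criterion is that the $\P$-martingale $M$ is uniformly integrable if and only if $M_\infty < \infty$ $\P^f$-a.s. Under $\P^f$, Lemma~\ref{lm:spinaldecomposition} provides the spine decomposition
\[
\langle Z^\phi_t, f\rangle = S_t + \sum_{k \,:\, T_k \leq t} \tilde Z^{(k)}(t),
\]
where $S_t$ is the contribution of the spine $\Xi$ at time $t$, the $T_k$ form a Poisson process of rate $2$ (the spine's fission times), and $\tilde Z^{(k)}$ is the contribution to $\langle Z^\phi_\cdot, f\rangle$ of the sub-BBM born from the $k$-th split. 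Conditionally on the $\sigma$-algebra $\mathcal{F}^\Xi$ generated by the spine (its trajectory and fission data), the sub-BBMs are independent $\P$-BBMs, and the argument in Proposition~\ref{prop:non_linear_shaving_definition} shows that each $(\tilde Z^{(k)}(t), t \geq T_k)$ is a non-negative $\P$-martingale starting from
\[
\tilde Z^{(k)}(T_k) = \langle R^\phi(\sp{\Xi_{T_k}}{\theta} - \sqrt{2}T_k, T_k) e^{\sqrt{2}(\sp{\Xi_{T_k}}{\theta} - \sqrt{2}T_k)} \ind{A(T_k, \theta)}, f\rangle,
\]
with $A(s,\theta) := \{\sp{\Xi_u}{\theta} \leq \sqrt{2}u + \phi(u) \text{ for all } u \leq s\}$. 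Conditional Fatou applied to the $\P^f$-a.s. limit $M_\infty = \lim_t M_t$ then yields
\[
\E^{\P^f}\bigl[\langle Z^\phi_\infty, f\rangle \bigm| \mathcal{F}^\Xi\bigr] \leq \liminf_{t \to \infty} S_t + \sum_k \tilde Z^{(k)}(T_k),
\]
so the task reduces to showing that both terms on the right are finite $\P^f$-a.s.

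Both $S_T$ and each $\tilde Z^{(k)}(T_k)$ are of the common form
\[
\mathcal{I}(T) := \langle R^\phi(\sp{\Xi_T}{\theta} - \sqrt{2}T, T) e^{\sqrt{2}(\sp{\Xi_T}{\theta} - \sqrt{2}T)} \ind{A(T,\theta)}, f\rangle,
\]
so the heart of the argument is estimating $\E^{\P^f}[\mathcal{I}(T)]$ uniformly in $T$. Under $\P^f$, conditionally on the direction $\theta_0$ drawn with density $f$, the spine has law $\P^{V(\theta_0)}$, so Lemma~\ref{lm:Bessel_type_fluctuations} applied to $(\sp{\Xi_t}{\theta_0} - \sqrt{2}t, t \geq 0)$ gives $\sp{\Xi_T}{\theta_0} - \sqrt{2}T = -T^{1/2+o(1)}$, while the projection of $\Xi$ onto $\theta_0^\perp$ is an unconditioned $(d-1)$-dimensional Brownian motion of typical size $O(\sqrt{T})$. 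For $\theta \in \S^{d-1}$ at polar angle $\beta$ from $\theta_0$, I would decompose
\[
\sp{\Xi_T}{\theta} - \sqrt{2}T = -\sqrt{2}T(1 - \cos\beta) - \cos\beta \cdot T^{1/2+o(1)} + \sin\beta \cdot \xi_T
\]
with $\xi_T \sim \mathcal{N}(0,T)$, apply the linear bound $R^\phi(x,t) \leq C(1 + (\phi(t) - x))$ from Lemma~\ref{lm:R_def}, and integrate against the $(d-1)$-dimensional spherical volume element $\sin^{d-2}\beta \, \dd\beta$. The target is a stretched-exponential estimate $\E^{\P^f}[\mathcal{I}(T)] \leq C T^a \exp(-c T^{1/2-\varepsilon})$ for some constants $a, c, \varepsilon > 0$. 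Borel--Cantelli then gives $S_T \to 0$ $\P^f$-a.s., while Campbell's formula for the Poisson fission process yields
\[
\E^{\P^f}\!\left[\sum_k \tilde Z^{(k)}(T_k)\right] = \frac{2}{R^\phi(0,0)} \int_0^\infty \E^{\P^f}[\mathcal{I}(T)] \dd T < \infty,
\]
completing the proof.

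The main obstacle is the estimate on $\E^{\P^f}[\mathcal{I}(T)]$. The Bessel-type gain $e^{-\sqrt{2}\,T^{1/2+o(1)}}$ coming from Lemma~\ref{lm:Bessel_type_fluctuations} lives only in the direction $\theta_0$: naively, under the polar substitution $u = \beta\sqrt{T}$, the transverse Gaussian tilt $\E[e^{\sqrt{2}\sin\beta \cdot \xi_T}] = e^{T\sin^2\beta}$ exactly cancels the $e^{-2T(1-\cos\beta)}$ factor produced by the deterministic drift of $\Xi$ along $\theta_0$. As a result, the desired decay must be extracted from the subtle $T^{1/2+o(1)}$ correction along $\theta_0$. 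Making this rigorous would involve restricting to the high-probability event $\{|\xi_T| \leq \sqrt{T} \log T\}$ (with its complement bounded by a standard Gaussian tail estimate), then verifying that the resulting $(\log T)^2$ correction is dominated by $\sqrt{2}\,T^{1/2+o(1)}$; carefully tracking this interplay through the $(d-1)$-dimensional spherical geometry around $\theta_0$ constitutes the bulk of the technical work.
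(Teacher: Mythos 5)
You have written a proof attempt for the wrong statement: your argument is aimed at Lemma~\ref{lm:uniform_integrability_shaved} (uniform integrability of the integrated martingale $\langle Z^\phi_t, f\rangle$ for general bounded measurable $f$), whereas the Corollary concerns a \emph{single fixed direction} $\theta$. The paper dispatches the Corollary in one line: $(\sp{X_t(j)}{\theta}, j \in \mathcal{N}_t)$ is a one-dimensional BBM, so $Z_t^\phi(\theta)$ is exactly the shaved derivative martingale of that $1$-d process, and its uniform integrability is Lemma~\ref{lm:uniform_integrability_shaved} specialised to $d=1$ with the binary weight $f(1)=1$, $f(-1)=0$ (equivalently the classical $1$-d spine argument of Kyprianou). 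In that degenerate case there is no transverse Brownian motion, no polar-angle integral, and none of the cancellation issue you worry about.

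As a proposed proof of the integrated statement itself, your argument has a genuine gap that you partly acknowledge. Your strategy bounds $\E^{\P^f}[\mathcal{I}(T)]$ and feeds that into Campbell's formula and Borel--Cantelli; this requires a target estimate $\E^{\P^f}[\mathcal{I}(T)] \leq C T^a e^{-cT^{1/2-\varepsilon}}$ which you do not prove and which is dubious: after averaging over the transverse Brownian motion and accounting for the deterministic drift loss $e^{-2T(1-\cos\beta)}$, the net Gaussian factor is $e^{-T(1-\cos\beta)^2}$, and integrating $\beta^{d-2}\,\dd\beta$ near $\beta=0$ yields only polynomial decay in $T$ from the angular integral; the stretched-exponential gain must come entirely from the $T^{1/2+o(1)}$ correction in the $\theta_0$ direction, filtered through the conditional law of $\xi_T$, whose upper tail near the barrier is only polynomial (Lemma~\ref{lem:smallValue}). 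Making $\int_0^\infty \E^{\P^f}[\mathcal{I}(T)]\,\dd T < \infty$ work out is therefore delicate and is the step you leave open.

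The paper avoids this entirely by arguing \emph{pathwise}: one only needs $\liminf_t S_t + \sum_k \tilde Z^{(k)}(T_k) < \infty$ $\P^f$-a.s., not in expectation. The crucial trick is the elementary uniform-in-$\theta$ bound $\sp{\Xi_s}{\theta} \leq \|\Xi_s\|$, which replaces the direction-by-direction analysis with a single control on the modulus of the spine. Combined with $\|\Xi_s\|^2 = \xi_s^2 + \|Y_s\|^2$, Lemma~\ref{lm:Bessel_type_fluctuations} giving $\xi_s \leq \sqrt{2}s - C_1 s^{1/2-\delta}$, and the law of the iterated logarithm giving $\|Y_s\| \leq C_2 s^{1/2+\delta'}$ with $\delta + 2\delta' < 1/2$, one gets $\|\Xi_s\| - \sqrt{2}s \leq -\tfrac{C_1}{2} s^{1/2-\delta}$ a.s.\ for large $s$, whence each summand is a.s.\ bounded by $C(1+2\sqrt{2}s + \phi(s)) e^{-\sqrt{2}\,\frac{C_1}{2} s^{1/2-\delta}}$ uniformly in $\theta$, and the Poisson sum is a.s.\ finite. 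No expectation of $\mathcal{I}(T)$ is needed, and the cancellation you identify never arises. If you want to prove Lemma~\ref{lm:uniform_integrability_shaved} yourself, you should switch to this pathwise modulus bound.
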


\begin{proof}[Proof of Lemma \ref{lm:uniform_integrability_shaved}.]
Note first that without loss of generality we may assume that $f \geq 0$ and that $\int_{\S^{d-1}} f(\theta) \sigma(\dd \theta) = 1$, as otherwise we may write $f$ as a linear combination of functions satisfying these assumptions and consider each of these functions separately. 

Set $\mathcal{Z} := \limsup_{t \to \infty} \langle Z^\phi_t, f \rangle$ (which is also equal to $\lim_{t \to \infty} \langle Z^\phi_t, f \rangle$ $\P$-a.s. because $\langle Z^\phi_t, f \rangle$ is a non-negative martingale). Recall the following measure theoretic dichotomy (see e.g. Theorem 5.3.3. in \cite{Durrett2010}):
\begin{theorem}
	Let $(\mathcal{F}_n)$ be a filtration, and let $\mathcal{F}_\infty$ be the smallest $\sigma$-field containing all $\mathcal{F}_n$. Let $\Prob, \Q$ be two probability measures on $(\Omega, \mathcal{F}_\infty)$. Assume that for any $n$, $\Q_{|\mathcal{F}_n} \ll \Prob_{|\mathcal{F}_n}$ and let $X_n := \frac{\dd\Q_{|\mathcal{F}_n}}{\dd\Prob_{|\mathcal{F}_n}}$ and $X := \limsup_{n \to \infty} X_n$ which is $\Prob$-a.s. finite. Then
	\begin{align*}
	\Q(A) = \E(X \ind{A}) + \Q(A \cap \{X = \infty\}), \quad \forall A \in \mathcal{F}_\infty.
	\end{align*}
	\label{thm:dichotomy}
\end{theorem}
From Theorem \ref{thm:dichotomy} we obtain that
\begin{align*}
\P^f\left(\frac{\mathcal{Z} }{R^\phi(0,0)}< \infty \right) = 1 \quad \iff \quad \int \frac{\mathcal{Z} }{R^\phi(0,0)} \dd \P = 1,
\end{align*}
thus instead of proving that $\E \mathcal{Z} = 1$, we shall prove that under $\P^f$, $\mathcal{Z}$ is almost surely finite. To show that, we are going to use the spinal decomposition from Lemma \ref{lm:spinaldecomposition}.

Let $\mathcal{F}_\infty$ be the filtration generated by the movement and the branching of the spine $\Xi$, and $\mathcal{B}_t$ be the set of branching times of the spine until time~$t$. From the decomposition mentioned above and the martingale property from Proposition \ref{prop:non_linear_shaving_definition} we see that
\begin{align*}
\P^f[\langle Z^\phi_t, f \rangle|\mathcal{F}_\infty] = {} &  \left\langle \sum_{s \in \mathcal{B}_t} R^\phi(\sp{\Xi_s}{\theta} - \sqrt{2}s ,s) e^{\sqrt{2}(\sp{\Xi_s}{\theta} - \sqrt{2}s)}, f \right\rangle\\
{} & +  \left\langle R^\phi(\sp{\Xi_t}{\theta} - \sqrt{2}t ,t) e^{\sqrt{2}(\sp{\Xi_t}{\theta} - \sqrt{2}t)}, f \right\rangle.
\end{align*}
To complete the proof it is enough to show that 
\begin{align}
\limsup_{t \to \infty} \P^f[\langle Z^\phi_t, f \rangle \ |\ \mathcal{F}_\infty]  < \infty, 
\label{eq:Z_under_Q}
\end{align}
as by Fatou's lemma we have
\begin{align*}
& \P^f[ \liminf_{t \to \infty} \langle Z^\phi_t, f \rangle \ | \ \mathcal{F}_\infty]\\
\leq {} & \liminf_{t \to \infty} \P^f[\langle Z^\phi_t, f \rangle \ | \ \mathcal{F}_\infty]\\
\leq {} & \limsup_{t \to \infty} \P^f[\langle Z^\phi_t, f \rangle \ | \ \mathcal{F}_\infty]  < \infty, 
\end{align*}
which implies that  $\P^f$-a.s., $\liminf_{s \to \infty} \langle Z^\phi_t, f \rangle < \infty$. Recalling the definition of $\P^f$, $(\langle Z^\phi_t, f \rangle)^{-1}$ is a non-negative $\P^f$-supermartingale, hence it converges to a finite limit $\P^f$ almost surely. This implies that $\P^f$ almost surely
\begin{align*}
\liminf_{s \to \infty} \langle Z^\phi_t, f \rangle = \limsup_{s \to \infty} \langle Z^\phi_t, f \rangle,
\end{align*}
from which we would deduce that $\lim_{t \to \infty} \langle Z^\phi_t, f \rangle < \infty$ $\P^f$-a.s.

It remains to show (\ref{eq:Z_under_Q}). We first upper bound $||\Xi_t|| = \sup_{\theta} \sp{\Xi_t}{\theta}$. Fix the direction $\theta_0$ in which the movement of the spine is altered. Observe that we can decompose the spine as $\Xi_t = \xi_t \theta_0 + Y_t$ where $\xi_t$ and $Y_t$ are independent processes such that $\xi_t$ is a Brownian motion with drift $\sqrt{2}t$ conditioned on never hitting $\sqrt{2}t + \phi(t)$ and $Y_t$ is a $(d-1)$-dimensional Brownian motion living in the space $\theta_0^{\perp}$. Thus
\begin{align*}
||\Xi_t|| = \sqrt{|\xi_t|^2+||Y_t||^2}.
\end{align*}

By Lemma \ref{lm:Bessel_type_fluctuations} almost surely for any $\delta>0$ there exist $C_1, t_0$ such that for all $t \geq t_0$, $|\xi_t| \leq \sqrt{2}t - C_1 t^{1/2 -\delta}$. Similarly, by e.g. the law of the iterated logarithm, for any $\delta' >0$ there exists $C_2$ such that  up to enlarging $t_0$, for all $t \geq t_0$, $||Y_t|| \leq C_2 t^{1/2 + \delta'}$. Choose $\delta, \delta'$ such that $\delta + 2 \delta' < 1/2$, then for $t$ large enough,
\begin{align}
||\Xi_t|| \leq {} & \sqrt{2t^2 + C_1^2t^{1-2\delta}-2\sqrt{2}C_1 t^{3/2-\delta}+C_2^2 t^{1+2\delta'}} \nonumber\\
\leq {} & \sqrt{2t^2 + (C_1/2)^2t^{1-2\delta}-2\sqrt{2}(C_1/2)t^{3/2-\delta}} \nonumber \\
= {} & \sqrt{2}t-C_1/2t^{1/2-\delta}.
\label{eq:spine_modulus}
\end{align}
Let $C_f = \sup_{\S^{d-1}} f(\theta)$. By Lemma \ref{lm:R_def} we know that for some $C \geq 0$, $R^\phi(x,t) \leq C(1+|x|+\phi(t))$ for all $x \in \R, t \geq 0$, thus since the spine particle has zero contribution in the limit, 
\begin{align*}
& \limsup_{t \to \infty} \P^f[\langle Z^\phi_t, f \rangle \ | \ \mathcal{F}_\infty]\\
\leq {} & C \left\langle \sum_{s \in \mathcal{B}_\infty} (1 + |\sqrt{2}s -  \sp{\Xi_s}{\theta}| + \phi(s) ) e^{\sqrt{2}(\sp{\Xi_s}{\theta} - \sqrt{2}s)}, C_f \right\rangle \\
\leq {} & P_{d-1} C C_f\sum_{s \in \mathcal{B}_\infty}  (1 + \sqrt{2}s + ||\Xi_s|| + \phi(s) ) e^{\sqrt{2}(||\Xi_s|| - \sqrt{2}s)},
\end{align*}
where $P_{d-1}$ is the surface area of a $d$ dimensional sphere. Combining it with (\ref{eq:spine_modulus}) we obtain that almost surely there exists a constant $C_1$ such that
\begin{align*}
\limsup_{t \to \infty} \P^f[\langle Z^\phi_t, f \rangle \ | \ \mathcal{F}_\infty] \leq P_{d-1} C C_f\sum_{s \in \mathcal{B}_\infty}  (1 + 2\sqrt{2}s + \phi(s)) e^{-s^{1/2-\delta}C_1/2},
\end{align*}
which is almost surely finite, as $\mathcal{B}$ is a Poisson point process with intensity~$2$. The proof is now complete.
\end{proof}

\subsection{Simultaneous limits  {on the sphere}}
\label{subsec:simultaneous_limit}

The main aim of this section is the proof of the following proposition, which shows that $(Z_t(\theta))$ converges a.s. both on a random set of full Lebesgue measure, and as a random measure.
\begin{proposition}
	Let $\phi$ be a function satisfying \eqref{eqn:hypPhi}. Then almost surely there exists $\Theta \subset \S^{d-1}$ of full Lebesgue measure such that for all $\theta \in \Theta$, $Z^\phi_\infty(\theta) := \lim_{t\to\infty} Z^\phi_t(\theta)$ exists, and for any bounded measurable function $f$,
	\begin{align*}
	\lim_{t \to \infty} \langle Z^\phi_t, f \rangle = \langle Z^\phi_\infty, f \rangle \textrm{ a.s.}
	\end{align*}
	Moreover, the limit is almost surely finite.
	\label{prop:shaved_swap_integral_limit}
\end{proposition}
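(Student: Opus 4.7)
The plan is to build $\Theta$ and identify the limit measure in four steps, combining the pointwise martingale convergence for fixed $\theta$ coming from Lemma~\ref{lm:uniform_integrability_shaved} (and its corollary) with the integrated martingale convergence, and then bridging the two through Fubini, Fatou and Scheffé's lemma.

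\emph{Step 1: pointwise limit on a random full-measure set.} For each fixed $\theta \in \S^{d-1}$, the projected process $(\sp{X_t(j)}{\theta}, j \in \calN_t)$ is a one-dimensional BBM, so by the corollary to Lemma~\ref{lm:uniform_integrability_shaved} the process $(Z_t^\phi(\theta), t\ge 0)$ is a non-negative uniformly integrable martingale of mean $R^\phi(0,0)$. Hence $Z^\phi_\infty(\theta) := \lim_{t\to\infty} Z_t^\phi(\theta)$ exists almost surely and $\E Z^\phi_\infty(\theta) = R^\phi(0,0)$. The map $(\omega,\theta)\mapsto Z_t^\phi(\theta)(\omega)$ is jointly measurable, and convergence as $t\to\infty$ can be expressed using countable suprema over rationals (since the process is right-continuous), so the set $N := \{(\omega,\theta) : Z_t^\phi(\theta)(\omega) \text{ does not converge}\}$ is measurable in $\Omega\times\S^{d-1}$. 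By Fubini and the previous observation, $(\P\otimes\sigma)(N) = 0$, and hence $\P$-a.s.\ the random set $\Theta(\omega) := \{\theta : (\omega,\theta)\notin N\}$ has full $\sigma$-measure. Set $Z^\phi_\infty(\theta) := 0$ for $\theta\notin\Theta$.

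\emph{Step 2: the total mass.} Apply Lemma~\ref{lm:uniform_integrability_shaved} to the constant function $f\equiv 1$: the process $\langle Z_t^\phi, 1\rangle$ is a non-negative UI martingale, hence converges a.s.\ and in $L^1$ to some finite limit $L_1$ with $\E L_1 = R^\phi(0,0)\sigma(\S^{d-1})$. On the event of full probability on which $\Theta$ has full $\sigma$-measure, Fatou's lemma yields
\begin{equation*}
\langle Z^\phi_\infty, 1\rangle \;=\; \int_{\Theta} \lim_{t\to\infty} Z_t^\phi(\theta)\,\sigma(\dd\theta) \;\leq\; \liminf_{t\to\infty} \int_{\S^{d-1}} Z_t^\phi(\theta)\,\sigma(\dd\theta) \;=\; L_1 \quad \text{a.s.}
\end{equation*}
On the other hand, using Step~1 and Fubini,
\begin{equation*}
\E \langle Z^\phi_\infty, 1\rangle \;=\; \int_{\S^{d-1}} \E Z^\phi_\infty(\theta)\,\sigma(\dd\theta) \;=\; R^\phi(0,0)\sigma(\S^{d-1}) \;=\; \E L_1.
\end{equation*}
An a.s.\ inequality between two non-negative integrable random variables with equal expectations must be an a.s.\ equality, so $\langle Z^\phi_\infty, 1\rangle = L_1$ a.s. In particular $\langle Z^\phi_t,1\rangle \to \langle Z^\phi_\infty,1\rangle$ a.s., and this limit is finite.

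\emph{Step 3: from total mass to general test functions via Scheffé.} Work on the full-probability event produced above. For $\sigma$-a.e.\ $\theta$ the non-negative sequence $Z_t^\phi(\theta)$ converges to $Z^\phi_\infty(\theta)$, and by Step~2 their $\sigma$-integrals converge to a finite limit. Scheffé's lemma (applied on the finite measure space $(\S^{d-1},\sigma)$) then upgrades this to $L^1(\sigma)$-convergence:
\begin{equation*}
\lim_{t\to\infty} \int_{\S^{d-1}} |Z_t^\phi(\theta) - Z^\phi_\infty(\theta)|\,\sigma(\dd\theta) \;=\; 0 \quad \text{a.s.}
\end{equation*}
For any bounded measurable $f : \S^{d-1}\to\R$, this gives immediately
\begin{equation*}
|\langle Z_t^\phi,f\rangle - \langle Z^\phi_\infty,f\rangle| \;\leq\; \|f\|_\infty \int |Z_t^\phi-Z^\phi_\infty|\,\dd\sigma \;\longrightarrow\; 0 \quad \text{a.s.},
\end{equation*}
so the full-probability event constructed in Step~2 already handles every bounded $f$ simultaneously, completing the proof.

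\emph{Main obstacle.} The delicate point is identifying the limit of the integrated martingale with the integral of the pointwise limits, since a priori the individual a.s.\ events of Step~1 depend on $\theta$ and the UI argument of Step~2 only gives a limit for the $\sigma$-integrated martingale. The Fatou plus matching-expectations argument for $f\equiv 1$ is what pins these two limits together, and Scheffé's lemma then exchanges this single identification for simultaneous convergence against all bounded $f$.
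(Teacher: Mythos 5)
Your proof is correct and takes a genuinely different route from the paper, with a cleaner modular structure. The paper fixes an arbitrary normalized $f \geq 0$ and runs a $\liminf$/$\limsup$ argument: Fatou gives $\mathcal{Z} \geq \langle \liminf_t Z_t^\phi, f\rangle$, then uniform integrability of both the integrated martingale and the one-dimensional martingales, together with rotational invariance (the distribution of $Z_t^\phi(\theta)$ being independent of $\theta$), is used to push the $\E$ through and show that the two sides have the same expectation, hence are a.s.\ equal; the same computation with $\limsup$ in place of $\liminf$ then forces $\liminf_t Z_t^\phi(\theta) = \limsup_t Z_t^\phi(\theta)$ for $\sigma$-a.e.\ $\theta$, which produces $\Theta$. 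Your argument instead establishes $\Theta$ directly via Fubini applied to the non-convergence set in $\Omega\times\S^{d-1}$, needs the Fatou-plus-matching-expectations trick only once (for $f\equiv 1$), and then invokes Scheffé to convert a.s.\ convergence of the total mass into a.s.\ $L^1(\sigma)$-convergence. The two proofs rely on closely related ingredients (Fubini, Fatou, uniform integrability of both the projected and the integrated martingales), but yours has the advantage of making explicit that the convergence $\langle Z_t^\phi,f\rangle\to\langle Z_\infty^\phi,f\rangle$ holds on a single full-probability event simultaneously for all bounded measurable $f$, rather than on an $f$-dependent event, which is strictly more information than a literal reading of the paper's proof yields for each fixed $f$. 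This simultaneity is exactly what is needed to interpret $Z_\infty^\phi(\theta)\,\sigma(\dd\theta)$ as a bona fide random measure.
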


\begin{proof}
Without loss of generality we may and will assume that $f \geq 0$ and $\int_{\S^{d-1}} f(\theta) \sigma(\dd \theta) = 1$. The integrated  martingale $\langle Z^\phi_t, f \rangle$ is non-negative, hence it converges a.s. to some limit, and we set $\mathcal{Z} := \lim_{t \to \infty} \langle Z^\phi_t, f \rangle$. Furthermore, by Lemma \ref{lm:uniform_integrability_shaved} this martingale is uniformly integrable, thus
\begin{align*}
\E \mathcal{Z} = \E \langle Z^\phi_0, f \rangle = R^\phi(0,0).
\end{align*}
We want to show that
\begin{align*}
\mathcal{Z} = \langle \lim_{t\to\infty} Z_t^{\phi}, f \rangle
\end{align*}
but {\it a priori} we don't even know that the right hand side is well defined.

As $Z_t^\phi(\theta) \geq 0$ a.s., we observe that by Fatou's lemma
\begin{align}
  \mathcal{Z} = \liminf_{t \to \infty} \langle Z_t^{\phi}, f \rangle \geq \langle \liminf_{t \to \infty} Z_t^{\phi}, f \rangle.
\label{eq:Z_infty_fubini}
\end{align}
Note that $\liminf_{t \to \infty} Z_t^{\phi}(\theta)$ exists simultaneously for all $\theta \in \S^{d-1}$. 

On the other hand, by the uniform integrability of $\langle Z^\phi_t, f \rangle$ (Lemma \ref{lm:uniform_integrability_shaved}) and Fubini's theorem,
\begin{align*}
\E \mathcal{Z} = \lim_{t \to \infty} \E \langle Z_t^{\phi}, f \rangle = \lim_{t \to \infty} \langle \E Z_t^{\phi}, f \rangle.
\end{align*}
Since the distribution of $Z_t^{\phi}(\theta)$ does not depend on $\theta$ (used in the first equality), and again using the uniform integrability, but of $Z_t^{\phi}(\theta)$, and also Fubini's theorem, we obtain that
\begin{align*}
\lim_{t \to \infty} \langle \E Z_t^{\phi}, f \rangle = {} & \langle \lim_{t \to \infty} \E Z_t^{\phi}, f \rangle 
= {}  \langle \E  \lim_{t \to \infty} Z_t^{\phi}, f \rangle \\
= {} & \langle \E  \liminf_{t \to \infty} Z_t^{\phi}, f \rangle
= {}  \E \langle \liminf_{t \to \infty} Z_t^{\phi}, f \rangle.
\end{align*}
Thus we have shown that 
\begin{align*}
\E\mathcal{Z} = \E \langle \liminf_{t \to \infty} Z_t^{\phi}, f \rangle,
\end{align*}
and recalling (\ref{eq:Z_infty_fubini}) this means that almost surely
\begin{align*}
\mathcal{Z} = \langle \liminf_{t \to \infty} Z_t^{\phi}, f \rangle. 
\end{align*}
By Fubini's theorem
\begin{align*}
\E \langle \limsup_{t \to \infty} Z_t^{\phi}, f \rangle = \langle \E  \lim_{t \to \infty} Z_t^{\phi}, f \rangle = \E \langle \liminf_{t \to \infty} Z_t^{\phi}, f \rangle,
\end{align*}
hence almost surely for almost all $\theta$
\begin{align*}
\limsup_{t \to \infty} Z_t^{\phi}(\theta) = \liminf_{t \to \infty} Z_t^{\phi}(\theta).
\end{align*}
Therefore, almost surely $\lim_{t\to\infty} Z_t^{\phi}(\theta)$ exists  simultaneously for all $\theta$ besides a random set of measure $0$, and 
\begin{align*}
\lim_{t \to \infty} \langle Z_t^{\phi}, f \rangle = \langle \lim_{t \to \infty} Z_t^{\phi}, f \rangle.
\end{align*}
\end{proof}

\subsection{Proof of Theorem \ref{thm:main}}
We start with the following technical lemma: 

\begin{lemma}
Let $\phi$ be a function satisfying \eqref{eqn:hypPhi}. If the function $\phi$ additionally satisfies $\lim_{t \to \infty} \phi(t) - \frac{d-1}{2\sqrt{2}} \log (1 + t) = \infty$ and $\phi'(t) = o(t^{-1/2-\epsilon})$, then for any bounded measurable function $f$,
\begin{multline}
\lim_{t \to \infty} \left\langle \sum_{j \in \mathcal{N}_t^{\phi, \theta}} (\sqrt{2}t - \sp{X_t(j)}{\theta}  + \phi(t))e^{\sqrt{2}(\sp{X_t(j)}{\theta} -\sqrt{2}t)}, f \right\rangle \\
= \left\langle \lim_{t \to \infty} \sum_{j \in \mathcal{N}_t^{\phi, \theta}} (\sqrt{2}t - \sp{X_t(j)}{\theta}  + \phi(t))e^{\sqrt{2}(\sp{X_t(j)}{\theta} -\sqrt{2}t)}, f \right\rangle
\label{eq:Z_replace_R}
\end{multline}
almost surely and the limit is finite with probability one.
\label{lm:Z_replace_R}
\end{lemma}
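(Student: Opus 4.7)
The idea is that $\hat Z_t^\phi(\theta) := \sum_{j \in \mathcal{N}_t^{\phi,\theta}} (\phi(t) - y_j) e^{\sqrt 2 y_j}$, with $y_j := \sp{X_t(j)}{\theta} - \sqrt 2 t$, differs from $Z_t^\phi(\theta)$ only through the replacement of $R^\phi(y_j, t)$ by $\phi(t) - y_j$, and Lemma \ref{lm:linearGrowth} together with the simple lower bound $R^\phi(y,t) \ge \phi(t) - y$ shows these agree up to a factor $1 + o(1)$ as $t \to \infty$. From the lower bound alone I get $\hat Z_t^\phi \le Z_t^\phi$ pathwise, and Proposition \ref{prop:shaved_swap_integral_limit} yields
\[
\limsup_{t \to \infty} \langle \hat Z_t^\phi, f \rangle \le \langle Z_\infty^\phi, f \rangle < \infty \quad \text{a.s.}
\]
The bulk of the work is the matching lower bound.

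Fix $\delta > 0$ and $D > \sqrt 2$, and let $t_0$ be given by Lemma \ref{lm:linearGrowth}. Decompose $Z_t^\phi = Z_t^{\phi,\mathrm{near}} + Z_t^{\phi,\mathrm{far}}$ according to whether $y_j \ge \phi(t) - Dt$ or $y_j < \phi(t) - Dt$. To eliminate particles close to the boundary $\phi(t)$, I invoke Lemma \ref{lm:push_barrier} together with the hypothesis $\phi(t) - \frac{d-1}{2\sqrt 2} \log(1+t) \to \infty$: for any $\epsilon > 0$ there is an event $A_\epsilon$ of probability $\ge 1 - \epsilon$ and $T_\epsilon$ such that for $t \ge T_\epsilon$ every particle, in every direction, has $y_j \le \phi(t) - Dt_0$. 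On $A_\epsilon$ the near sum only picks up $y_j \in [\phi(t) - Dt, \phi(t) - Dt_0]$, so Lemma \ref{lm:linearGrowth} gives $R^\phi(y_j, t) \le (1+\delta)(\phi(t) - y_j)$ and hence $Z_t^{\phi,\mathrm{near}}(\theta) \le (1+\delta) \hat Z_t^\phi(\theta)$.

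It remains to show $\langle Z_t^{\phi,\mathrm{far}}, f \rangle \to 0$ almost surely. The many-to-one lemma, followed by a Girsanov change of measure absorbing the weight $e^{\sqrt 2(y-\sqrt 2 t)}$, expresses $\E[Z_t^{\phi,\mathrm{far}}(\theta)]$ as $\E[R^\phi(B_t, t) \ind{B_s \le \phi(s), s \le t} \ind{B_t < \phi(t) - Dt}]$ for a standard one-dimensional Brownian motion $B$. Using $R^\phi(y,t) \le C(1 + \phi(t) - y)$ from Lemma \ref{lm:R_def} and the Gaussian tail $\P(B_t < -Dt/2) \le e^{-D^2 t / 8}$ valid for large $t$, I obtain $\E[Z_t^{\phi,\mathrm{far}}(\theta)] \le C\sqrt{t}\, e^{-D^2 t /8}$ independently of $\theta$. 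This yields $\sum_n \E[\langle Z_n^{\phi,\mathrm{far}}, f \rangle] < \infty$ and Borel--Cantelli gives $\langle Z_n^{\phi,\mathrm{far}}, f \rangle \to 0$ a.s.\ along integer times.

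Combining, on $A_\epsilon$ and for $t \ge T_\epsilon$ I have $\langle \hat Z_t^\phi, f \rangle \ge (1+\delta)^{-1} (\langle Z_t^\phi, f \rangle - \langle Z_t^{\phi,\mathrm{far}}, f \rangle)$, whence $\liminf_t \langle \hat Z_t^\phi, f \rangle \ge (1+\delta)^{-1} \langle Z_\infty^\phi, f \rangle$ on $A_\epsilon$; letting $\delta, \epsilon \to 0$ and combining with the upper sandwich gives $\lim_t \langle \hat Z_t^\phi, f \rangle = \langle Z_\infty^\phi, f \rangle$ a.s. The same sandwich applied pointwise in $\theta$ (using Fubini on the expectation bound to see that $Z_n^{\phi,\mathrm{far}}(\theta) \to 0$ a.s.\ for a.e.\ $\theta$) gives $\hat Z_t^\phi(\theta) \to Z_\infty^\phi(\theta)$ for a.e.\ $\theta$, so the right-hand side of \eqref{eq:Z_replace_R} coincides with $\langle Z_\infty^\phi, f \rangle$ and is finite. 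The main obstacle is extending the Borel--Cantelli statement on $\langle Z_t^{\phi,\mathrm{far}}, f \rangle$ from integer times to continuous $t$, which I would handle by controlling $\sup_{t \in [n,n+1]} \langle Z_t^{\phi,\mathrm{far}}, f \rangle$ through a slightly enlarged far region at time $n$, using that on a high-probability event the BBM cannot move or proliferate too much in one unit of time.
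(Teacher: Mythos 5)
Your approach reaches the conclusion but is more roundabout than the paper's, and it leaves a genuine gap at the continuous-time interpolation step. The key observation you miss is that picking $D > 2\sqrt{2}$ (rather than your $D > \sqrt{2}$) makes the ``far'' set empty with high probability, so no first-moment/Borel--Cantelli argument is needed at all. Indeed, Lemma~\ref{lm:push_barrier} --- the same lemma you already invoke to keep particles away from the barrier $\phi(t)$ --- gives the two-sided control: on the event $\{\forall t, j : \|X_t(j)\| \le \sqrt 2 t + \tfrac{d-1}{2\sqrt 2}\log(1+t) + C_\epsilon\}$, which has probability $\ge 1-\epsilon$, one has $|\sp{X_t(j)}{\theta}| \le \|X_t(j)\|$ and hence $\sqrt 2 t - \sp{X_t(j)}{\theta} \le 2\sqrt 2 t + O(\log t)$ simultaneously for every $\theta$ and $j$. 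Since $\phi(t) = o(t)$, for any $D > 2\sqrt{2}$ this forces $y_j \in [\phi(t) - Dt, \phi(t) - Dt_0]$ for all $t$ large, so Lemma~\ref{lm:linearGrowth} applies to every term of the sum uniformly in $\theta$, and \eqref{eq:Z_replace_R} follows directly from Proposition~\ref{prop:shaved_swap_integral_limit} applied inside and outside the bracket. On the substance of your route: the first-moment computation for the far part (many-to-one, Girsanov, Gaussian tail) is fine, and so is the Fubini swap to get $\theta$-a.e.\ convergence, but Borel--Cantelli gives a.s.\ convergence of $\langle Z_n^{\phi,\mathrm{far}}, f\rangle$ only along integer $n$. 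Extending this to continuous $t$ is nontrivial: $Z^{\phi,\mathrm{far}}_t$ is neither monotone nor a (sub/super)martingale, and bounding $\sup_{t\in[n,n+1]}$ requires a uniform maximal-displacement and birth-rate control on unit intervals that survives the exponentially growing population; your closing sketch does not settle it. The paper's choice of $D$ sidesteps the problem entirely.
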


Note that there are two differences between Lemma~\ref{lm:Z_replace_R} and Theorem~\ref{thm:main}: firstly, we don't take a sum over all particles, and secondly we have an additional term $\phi$ appearing. We solve both of these issues in the remainder of this section.

\begin{proof}[Proof of Lemma \ref{lm:Z_replace_R}]
Recall the definition \eqref{eq:non_linear_shaving_theta}. Since 
\begin{align*}
\lim_{t \to \infty} \phi(t) - \frac{d-1}{2\sqrt{2}} \log (1 + t) = \infty,
\end{align*} from Lemma \ref{lm:push_barrier} we obtain that 
\begin{align*}
\lim_{t\ \to \infty} \inf_{j \in \mathcal{N}_t} \left(\sqrt{2}t - ||X_t(j)|| + \phi(t)\right) = +\infty
\end{align*}
and
\begin{align*}
\limsup_{t \to \infty} \sup_{\theta \in \S^{d-1}, j \in \mathcal{N}_t} \frac{1}{t}(\sqrt{2}t - \sp{X_t(j)}{\theta}) = 2\sqrt{2}
\end{align*}
almost surely. We are now going to make use of the asymptotic behaviour of $R^\phi(x,t)$: we apply Lemma \ref{lm:R_asymptotics} with $D > 2\sqrt{2}$ and an arbitrarily small $\delta$ to obtain that almost surely
\begin{align}
\langle Z^\phi_\infty, f \rangle
= \langle \lim_{t \to \infty} \sum_{j \in \mathcal{N}_t^{\phi, \theta}} (\sqrt{2}t - \sp{X_t(j)}{\theta}  + \phi(t))e^{\sqrt{2}(\sp{X_t(j)}{\theta} -\sqrt{2}t)}, f \rangle.
\label{eq:Z_replace_R_1}
\end{align}
From Proposition \ref{prop:shaved_swap_integral_limit} we know that
\begin{align*}
\langle Z^\phi_\infty, f \rangle = \lim_{t \to \infty} \langle Z^\phi_t, f \rangle.
\end{align*}
and again, applying Lemma \ref{lm:R_asymptotics} with $D > 2\sqrt{2}$ and an arbitrarily small $\delta$, we obtain that 
\begin{align}
\langle Z^\phi_\infty, f \rangle
= \lim_{t \to \infty}  \left\langle \sum_{j \in \mathcal{N}_t^{\phi, \theta}} (\sqrt{2}t - \sp{X_t(j)}{\theta}  + \phi(t))e^{\sqrt{2}(\sp{X_t(j)}{\theta} -\sqrt{2}t)}, f \right\rangle.
\label{eq:Z_replace_R_2}
\end{align}
Combining \eqref{eq:Z_replace_R_1} and \eqref{eq:Z_replace_R_2} completes the proof.
\end{proof}

We now get rid of the term involving $\phi$ in \eqref{eq:Z_replace_R}:
\begin{lemma}
Let $\phi$ be such that $\phi(t) = o(t^{1/2-\epsilon})$ for some $\epsilon > 0$. Then
\begin{align*}
\lim_{t \to \infty} \phi(t) \left\langle \sum_{j \in \mathcal{N}_t} e^{\sqrt{2}(\sp{X_t(j)}{\theta} -\sqrt{2}t)}, 1 \right\rangle = 0
\end{align*}
almost surely.
\label{lm:Z_infty_part_with_phi}
\end{lemma}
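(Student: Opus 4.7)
The plan is to reduce to a shaved setting via Lemma~\ref{lm:push_barrier}, obtain a first-moment decay of order $t^{-1/2}$, and then upgrade it to almost-sure decay via Doob's maximal inequality for non-negative supermartingales combined with the Borel--Cantelli lemma. Let $W_t(\theta) := \sum_{j \in \mathcal{N}_t} e^{\sqrt{2}(\sp{X_t(j)}{\theta} - \sqrt{2}t)}$, so that the statement reads $\phi(t)\langle W_t,1\rangle \to 0$ a.s. Replacing $\phi(t)$ by $\sup_{s \leq t}|\phi(s)|$, which remains $o(t^{1/2-\epsilon})$, I may and will assume $\phi$ is non-negative and non-decreasing. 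Given an auxiliary parameter $\delta>0$, let $C_\delta$ be the constant from Lemma~\ref{lm:push_barrier} and set $\tilde\phi(t) := A\log(1+t) + C_\delta + 1$ with $A > \frac{d-1}{2\sqrt 2}$; then $\tilde\phi$ satisfies~\eqref{eqn:hypPhi} and the hypotheses of Theorem~\ref{thm:Novikov_existence}. On the event $E_\delta$ of probability at least $1-\delta$ provided by Lemma~\ref{lm:push_barrier}, $\sp{X_t(j)}{\theta} \leq \|X_t(j)\| \leq \sqrt 2 t + \tilde\phi(t)$ simultaneously for every $\theta, t, j$, so $\mathcal{N}_t = \mathcal{N}_t^{\tilde\phi,\theta}$ and $\langle W_t, 1\rangle = \langle W_t^{\tilde\phi}, 1\rangle$ on $E_\delta$, where $W_t^{\tilde\phi}(\theta) := \sum_{j \in \mathcal{N}_t^{\tilde\phi,\theta}} e^{\sqrt{2}(\sp{X_t(j)}{\theta}-\sqrt 2 t)}$. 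As $\delta$ is arbitrary, it suffices to prove $\phi(t) M_t \to 0$ almost surely, with $M_t := \langle W_t^{\tilde\phi}, 1\rangle$.

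A short branching-and-killing argument shows that for each $\theta$ the process $W_t^{\tilde\phi}(\theta)$ is a non-negative càdlàg supermartingale: conditioning on $\mathcal{G}_s$, the contribution to $W_t^{\tilde\phi}(\theta)$ of the subtree rooted at any $j \in \mathcal{N}_s^{\tilde\phi, \theta}$ is dominated by its contribution to the un-killed critical additive martingale, whose conditional expectation is $e^{\sqrt 2(\sp{X_s(j)}{\theta} - \sqrt 2 s)}$. Summing and using Fubini, $M_t$ is also a non-negative supermartingale. The many-to-one lemma combined with a Girsanov shift by $\sqrt 2\theta$ yields
\[
\E\bigl[W_t^{\tilde\phi}(\theta)\bigr] = \P(\tau_{\tilde\phi} > t),
\]
and Theorem~\ref{thm:Novikov_existence} applied to $\tilde\phi$ then gives $\sqrt{t}\,\P(\tau_{\tilde\phi} > t) \to \sqrt{2/\pi}\,\E B_{\tau_{\tilde\phi}} \in (0,\infty)$, so $\E M_t = O(t^{-1/2})$ by integrating over $\S^{d-1}$.

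To conclude I would pick a sparse grid $t_n := n^B$ with $B > 2/\epsilon$ and apply Doob's maximal inequality for non-negative supermartingales: for $\lambda_n := 1/(n\,\phi(t_{n+1}))$,
\[
\P\Bigl(\sup_{s \in [t_n,\,t_{n+1}]} M_s \geq \lambda_n\Bigr) \leq \frac{\E M_{t_n}}{\lambda_n} \leq C\,n\,\phi(t_{n+1})/\sqrt{t_n} = o\bigl(n^{1-B\epsilon}\bigr),
\]
which is summable by the choice of $B$. Borel--Cantelli yields $\sup_{s \in [t_n, t_{n+1}]} \phi(t_{n+1})\,M_s \to 0$ a.s., and since $\phi$ is non-decreasing, $\phi(t)\,M_t \leq \phi(t_{n+1})\sup_{s \in [t_n,t_{n+1}]} M_s$ for every $t \in [t_n,t_{n+1}]$, closing the argument. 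The step I expect to require the most care is the first-moment asymptotic $\E W_t^{\tilde\phi}(\theta) = \Theta(t^{-1/2})$: one must make the many-to-one/Girsanov computation precise and check that the logarithmic $\tilde\phi$ really meets the hypotheses of Theorem~\ref{thm:Novikov_existence}. Once that is in hand, the passage from $L^1$-decay to pathwise decay via Doob and Borel--Cantelli is routine.
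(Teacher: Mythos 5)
Your proof is correct, but it takes a genuinely different route from the paper's. The paper deduces Lemma~\ref{lm:Z_infty_part_with_phi} from the earlier machinery: it introduces an auxiliary $\psi(t) = t^{1/2-\epsilon/2}$, invokes the a.s.\ convergence of the integrated shaved derivative martingale $\langle Z^\psi_t, 1\rangle$ via Lemma~\ref{lm:Z_replace_R} (which in turn rests on the uniform-integrability argument of Lemma~\ref{lm:uniform_integrability_shaved} and the swap of limit and integral in Proposition~\ref{prop:shaved_swap_integral_limit}), then decomposes the prefactor $(\sqrt 2 t - \sp{X_t(j)}{\theta} + \psi(t) + A_\delta)$ into its negative, positive, and $\psi$ pieces to show that the $\psi$-weighted additive sum stays bounded; finally $\phi/\psi \to 0$ closes the argument. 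Your approach is more direct and self-contained: you treat the killed additive sum $W_t^{\tilde\phi}(\theta)$ as a non-negative supermartingale (branching plus monotonicity in the killing set), compute its first moment exactly as $\P(\tau_{\tilde\phi} > t) \asymp t^{-1/2}$ via many-to-one, Girsanov, and Novikov's Theorem~\ref{thm:Novikov_existence}, and then upgrade the $L^1$ decay to pathwise decay by Doob's maximal inequality along a sparse grid and Borel--Cantelli, using the monotonicity of $\phi$ to interpolate. What the paper's route buys is that it reuses already-proved results, at the cost of some case analysis with the positive/negative/$\psi$ terms. What your route buys is independence from the shaved derivative martingale convergence and a cleaner supermartingale/Doob structure, at the cost of having to justify the first-moment asymptotic and the supermartingale property from scratch (both of which you do correctly). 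One minor point worth making explicit: after replacing $\phi$ by $\sup_{s\le t}|\phi(s)|$ you should also bump it to $\max(\sup_{s\le t}|\phi(s)|, 1)$ (still $o(t^{1/2-\epsilon})$) so that $\lambda_n = 1/(n\,\phi(t_{n+1}))$ is well-defined; and the right-continuity of $M_t$ needed for the maximal inequality should be noted, though it is standard for these branching sums.
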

\begin{proof}
Without loss of generality assume that $\phi(t)$ is an increasing, concave, $\mathcal{C}^1$-class function such that $\lim_{t \to \infty} \phi(t) - \frac{d-1}{2\sqrt{2}} \log (1 + t) = \infty$ and $\phi'(t) = o(t^{-1/2-\epsilon})$. Set $\psi(t) := t^{1/2-\epsilon/2}$ and observe that by Lemma \ref{lm:push_barrier}, for any $\delta > 0$ we can choose $A_\delta$ such that with probability $1-\delta$ none of the particles ever hit the sphere of an increasing radius $\sqrt{2} t + \frac{d-1}{2\sqrt{2}} \log(1+t) + A_\delta$, thus conditioning on this event
\begin{align}
\begin{split}
& \limsup_{t \to \infty} \ \left\langle \sum_{j \in \mathcal{N}_t^{\psi +A_\delta, \theta}} (\sp{X_t(j)}{\theta} - \sqrt{2}t)\ind{\sp{X_t(j)}{\theta} \geq \sqrt{2}t}e^{\sqrt{2}(\sp{X_t(j)}{\theta} -\sqrt{2}t)}, 1 \right\rangle \\
\leq {} & \limsup_{t \to \infty} \ \left\langle \sum_{j \in \mathcal{N}_t^{\psi +A_\delta, \theta}} (\phi(t)+A_\delta)\ind{\sp{X_t(j)}{\theta} \geq \sqrt{2}t}e^{\sqrt{2}(\sp{X_t(j)}{\theta} -\sqrt{2}t)}, 1 \right\rangle \\
\leq {} & \limsup_{t \to \infty} \frac{\phi(t)+A_\delta}{\psi(t)+A_\delta} \left\langle \sum_{j \in \mathcal{N}_t^{\psi +A_\delta, \theta}} (\psi(t)+A_\delta)e^{\sqrt{2}(\sp{X_t(j)}{\theta} -\sqrt{2}t)}, 1 \right\rangle.
\end{split}
\label{eq:Z_infty_negative_part_bound}
\end{align}
Consider the following decomposition:
\begin{align}
\begin{split}
\sqrt{2}t - \sp{X_t(j)}{\theta}  + \psi(t) + A_\delta = {} & (\sqrt{2}t - \sp{X_t(j)}{\theta}) \ind{\sp{X_t(j)}{\theta} \geq \sqrt{2}t} \\
{} & +(\sqrt{2}t - \sp{X_t(j)}{\theta}) \ind{\sp{X_t(j)}{\theta} \leq \sqrt{2}t}\\
{} & +\psi(t) + A_\delta.
\end{split}
\label{eq:Z_infty_negative_part}
\end{align}

Note that only the first term on the right-hand side of \eqref{eq:Z_infty_negative_part} is negative. Since by Lemma \ref{lm:Z_replace_R}
\begin{align}
\lim_{t \to \infty}  \left\langle \sum_{j \in \mathcal{N}_t^{\psi + A_\delta, \theta}} (\sqrt{2}t - \sp{X_t(j)}{\theta}  + \psi(t) + A_\delta)e^{\sqrt{2}(\sp{X_t(j)}{\theta} -\sqrt{2}t)}, 1 \right\rangle
\label{eq:Z_psi}
\end{align}
exists almost surely, from (\ref{eq:Z_infty_negative_part}), (\ref{eq:Z_infty_negative_part_bound}) and $\lim_{t \to \infty} \frac{\phi(t) + A_\delta}{\psi(t) + A_\delta}=0$ we deduce that the limit
\begin{align*}
\limsup_{t \to \infty} \ \left\langle \sum_{j \in \mathcal{N}_t^{\psi +A_\delta, \theta}} (\psi(t)+A_\delta)e^{\sqrt{2}(\sp{X_t(j)}{\theta} -\sqrt{2}t)}, 1 \right\rangle
\end{align*}
is finite with probability $1-\delta$: if it wasn't finite with probability larger than~$\delta$, then by \eqref{eq:Z_infty_negative_part_bound} and \eqref{eq:Z_infty_negative_part}, with positive probability \eqref{eq:Z_psi} would diverge to infinity, as its negative part is negligible in comparison to the positive one. 

Since $\lim_{t \to \infty} \frac{\phi(t)}{\psi(t)}=0$, this implies further that with probability $1-\delta$
\begin{align*}
\lim_{t \to \infty} (\phi(t)+A_\delta) \left\langle \sum_{j \in \mathcal{N}_t^{\phi + A_\delta, \theta}} e^{\sqrt{2}(\sp{X_t(j)}{\theta} -\sqrt{2}t)}, 1 \right\rangle = 0.
\end{align*}
Taking $A_\delta$ arbitrarily large completes the proof.
\end{proof}

We are now ready to present the last step of the proof of Theorem \ref{thm:main}. Recalling Lemma \ref{lm:Z_replace_R} we show that in fact we can sum over all the particles and we can still swap integration with taking the limit. As was mentioned before, this is the step where we consider a sequence of functions $\phi \vee A$ for $A \in \mathbb{N}$.

\begin{proof}[Proof of Theorem \ref{thm:main}]
Set $\phi = t^{1/2-\epsilon}$ for some $\epsilon \in (0,1)$. By combining Lemma~\ref{lm:Z_replace_R} with Lemma \ref{lm:Z_infty_part_with_phi} we obtain that almost surely for all $A \in \mathbb{N}$
\begin{multline}
\lim_{t \to \infty} \left\langle \sum_{j \in \mathcal{N}_t^{\phi \vee A, \theta}} (\sqrt{2}t - \sp{X_t(j)}{\theta})e^{\sqrt{2}(\sp{X_t(j)}{\theta} -\sqrt{2}t)}, f \right\rangle\\
=\left\langle \lim_{t \to \infty} \sum_{j \in \mathcal{N}_t^{\phi \vee A, \theta}} (\sqrt{2}t - \sp{X_t(j)}{\theta})e^{\sqrt{2}(\sp{X_t(j)}{\theta} -\sqrt{2}t)}, f \right\rangle.
\label{eq:main_proof_1}
\end{multline}

By Lemma \ref{lm:push_barrier} for any $\delta$ we can choose $A_\delta$ such that the event defined by $B_\delta := \{\forall s > 0, u \in \mathcal{N}_s: ||X_s(j)|| \leq \sqrt{2}s + \phi (s) \vee A_\delta\}$ happens with probability  $\Prob(B_\delta) \geq 1-\delta$. Therefore, conditioning on $B_\delta$ and taking $A \geq A_\delta$ in \eqref{eq:main_proof_1}, we obtain that
\begin{multline}
\lim_{t \to \infty} \left\langle \sum_{j \in \mathcal{N}_t} (\sqrt{2}t - \sp{X_t(j)}{\theta})e^{\sqrt{2}(\sp{X_t(j)}{\theta} -\sqrt{2}t)}, f \right\rangle\\
=\left\langle \lim_{t \to \infty} \sum_{j \in \mathcal{N}_t} (\sqrt{2}t - \sp{X_t(j)}{\theta})e^{\sqrt{2}(\sp{X_t(j)}{\theta} -\sqrt{2}t)}, f \right\rangle.
\label{eq:main_proof_2}
\end{multline} 
holds almost surely on $B_\delta$. Taking $\delta$ arbitrarily small we conclude that \eqref{eq:main_proof_2} holds with probability one, which proves \eqref{eq:thm_main}.

Finally, to show that $\langle Z_\infty(\theta),1 \rangle> 0$ we observe that by Fubini's theorem
\begin{align*}
0 = \int_{\S^{d-1}} \Prob(Z_\infty(\theta) = 0) \sigma(\dd \theta) = \E \left[\int_{\S^{d-1}} \ind{Z_\infty(\theta) = 0} \sigma(\dd \theta) \right],
\end{align*}
which completes the proof.
\end{proof}

\section{Direction of the largest displacement in dimension one}
\label{sec:1dim}

In this section we prove Theorem \ref{thm:main1d} but we start by showing how Corollary \ref{cor:1d} follows from Theorem \ref{thm:main1d}. Set
\begin{align*}
  G^+_t &:= \sqrt{2}(M^+_t-m_t - \tfrac{\sqrt{2}}{2} \log Z_\infty)\\
  \text{and} \quad G^-_t &:= \sqrt{2}(- M^-_t-m_t - \tfrac{\sqrt{2}}{2} \log Z^-_\infty).
\end{align*}
Then we can rewrite
\begin{align*}
\P\left(M^+_t > - M^-_t \ \Big| \ \mathcal{F}_s\right)
= \P\left(G^+_t + \log Z_\infty > G^-_t + \log Z_\infty^- \ \bigg| \ \mathcal{G}_s\right).
\end{align*}
Theorem \ref{thm:main1d} tells us that $(G^+_t, G^-_t)$ conditioned on $\mathcal{G}_s$ converges in the double limit, first letting $t \to \infty$ and then $s \to \infty$, to a pair of independent standard Gumbel random variables. Thus the proof of Corollary \ref{cor:1d} is a consequence of the following lemma.

\begin{lemma}
Let $G_1, \ldots, G_n$ be independent standard Gumbel-distributed random variables. Then for any $a_1, \ldots, a_n$,
\begin{align*}
\P\Big(G_1 + a_1 \geq \max (a_2 + G_2, \ldots, a_n + G_n)\Big) = \frac{e^{a_1}}{\sum_{i=1}^n e^{a_i}}.
\end{align*}
\end{lemma}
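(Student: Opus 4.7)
The plan is a direct computation, conditioning on the value of $G_1 + a_1$. Recall that a standard Gumbel random variable has CDF $\P(G \leq x) = \exp(-e^{-x})$, so for each $i$ the shifted variable $a_i + G_i$ has CDF $\P(a_i + G_i \leq x) = \exp(-e^{a_i} e^{-x})$ and $G_1 + a_1$ has density $f(x) = e^{a_1 - x} \exp(-e^{a_1} e^{-x})$. Using independence and conditioning on $G_1 + a_1 = x$,
\begin{align*}
\P\bigl(G_1 + a_1 \geq \max_{i \geq 2}(a_i + G_i)\bigr)
&= \int_{-\infty}^{\infty} \prod_{i=2}^{n} \exp\bigl(-e^{a_i} e^{-x}\bigr) \cdot e^{a_1-x} \exp\bigl(-e^{a_1} e^{-x}\bigr) \, \dd x \\
&= \int_{-\infty}^{\infty} e^{a_1} e^{-x} \exp\bigl(-S e^{-x}\bigr) \, \dd x,
\end{align*}
where $S := \sum_{i=1}^n e^{a_i}$.

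The substitution $u = e^{-x}$, $\dd u = -e^{-x}\dd x$, turns the integral into
\begin{align*}
e^{a_1} \int_0^\infty \exp(-S u) \, \dd u = \frac{e^{a_1}}{S} = \frac{e^{a_1}}{\sum_{i=1}^n e^{a_i}},
\end{align*}
which is the claimed identity. There is no substantive obstacle: the only thing to be careful about is the convention (location-scale) for the standard Gumbel law, but with the CDF $\exp(-e^{-x})$ used implicitly in Theorem~\ref{thm:main1d} (via the scale parameter $\sqrt{2}/2$ and the shift by $\frac{\sqrt 2}{2}\log(c_\star Z_\infty^\pm)$), the result falls out immediately.

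As an alternative route, one could invoke the Poisson point process representation: if $\Pi_i$ are independent PPPs on $\R$ with intensities $e^{a_i} e^{-x}\dd x$, then $\max \Pi_i \stackrel{d}{=} a_i + G_i$, the superposition $\bigcup_i \Pi_i$ is a PPP of intensity $S e^{-x}\dd x$, and the coloring theorem gives that the maximum of the superposition comes from $\Pi_1$ with probability $e^{a_1}/S$. Either argument suffices.
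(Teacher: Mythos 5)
Your proof is correct and takes essentially the same approach as the paper: both condition on the value of $G_1 + a_1$ and integrate the resulting density against the product of CDFs of $a_i + G_i$, $i \geq 2$. The only cosmetic difference is that you evaluate the last integral by the substitution $u = e^{-x}$, whereas the paper recognizes a shifted Gumbel density after factoring out $e^{-K}$ with $K = \log(1 + \sum_{i \geq 2} e^{a_i - a_1})$.
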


\begin{proof}
Recall that the pdf of the standard Gumbel distribution is given by $e^{-(x+e^{-x})}$, and the cdf is given by $e^{-e^{-x}}$. Then by simple computations, setting $K := \log (1+\sum_{i=2}^n e^{a_i-a_1})$, we have 
\begin{align*}
& \P\Big(a_1+G_1 \geq \max (a_2 + G_2, \ldots, a_n + G_n)\Big) \\
= {} & \int_\R e^{-(g_1+e^{-g_1})} \prod_{i=2}^n e^{e^{-(a_1+g_1-a_i)}}\dd g_1 \\
= {} & \int_\R e^{-g_1-e^{-g_1}\left(1+\sum_{i=2}^n e^{a_i-a_1}\right)}\dd g_1\\
= {} & e^{-K} \int_\R e^{-(g_1-K)-e^{-(g_1-K)}}\dd g_1\\
= {} & \frac{1}{1+\sum_{i=2}^n e^{a_i-a_1}} = \frac{e^{a_1}}{\sum_{i=1}^n e^{a_i}}. \qedhere
\end{align*}
\end{proof}

We now prove the main theorem of this section.
\begin{proof}[Proof of Theorem \ref{thm:main1d}]
Let $y,z \geq 0$. Note that for all $0 \leq s \leq t$ we have
\begin{equation}
 \label{eqn:branchingProp1d}
 \P\left(M^+_t - m_t \leq y, -M^-_t - m_t \leq z \ \Big| \ \mathcal{G}_s\right) = \prod_{j \in \mathcal{N}_s} \nu_{s,t}(X_s(j),y,z)
\end{equation}
by the branching property, where we have set
\begin{align*}
  \nu_{s,t}(x,y,z) := \P\left(M^+_{t-s}-m_t \leq y-x, -M^-_{t-s}-m_t \leq z + x\right).
\end{align*}
We now bound $\nu_{s,t}$ from above and from below to obtain an asymptotically tight estimate for the joint cdf of $(M^+_t,M^-_t)$ given $\mathcal{G}_s$. We begin by computing a lower bound. Observe first that since $m_t - m_{t-s} = \sqrt{2}s + \frac{3}{2\sqrt{2}}\log \frac{t-s}{t}$, from the inequality $x/(x+1) \leq \log (1+x) \leq x $ we obtain that
\begin{align}
\sqrt{2}s - \frac{3}{2\sqrt{2}} \frac{s}{t-s} \leq m_t - m_{t-s} \leq \sqrt{2}s - \frac{3}{2\sqrt{2}} \frac{s}{t}.
\label{eq:diff_mt}
\end{align}
Therefore, noting that for any events $F,G$ one has $\P(F \cap G) \geq 1 - \P(F^c) - \P(G^c)$, we obtain that
\begin{align}
\begin{split}
\nu_{s,t}(x,y,z) \geq {}  1 &- \P\left(M^+_{t-s} - m_{t-s} \geq y - (x - \sqrt{2} s) - \frac{3}{2\sqrt{2}} \frac{s}{t-s}\right) \\
{} & - \P\left(-M^-_{t-s} - m_{t-s} \geq z + ( x + \sqrt{2} s) - \frac{3}{2\sqrt{2}} \frac{s}{t-s}\right).
\end{split}
\label{eq:psi_bound}
\end{align}
From \cite[Theorem 1]{Bramson1983} we know that $\P(M^+_t-m_t \geq x)$ converges uniformly as $t \to \infty$ to $\omega(x)$, which further satisfies
\[
  1 - \omega(x) \sim c_\star x e^{-\sqrt{2} x} \quad \text{ as } x \to \infty.
\]
Hence, \eqref{eq:psi_bound} yields
\begin{align*}
& \liminf_{t \to \infty} \prod_{j \in \mathcal{N}_s} \nu_{s,t}(X_s(j),y,z) \\
\geq {} & \prod_{j \in \mathcal{N}_s} \Big[1 - \omega(\sqrt{2} s - X_s(j) + y) - \omega(\sqrt{2}s + X_s(j) + z))\Big]\\
\geq {} & \prod_{j \in \mathcal{N}_s} \Big[1 - \Big\{c_\star(\sqrt{2} s - X_s(j) + y) e^{\sqrt{2}(X_s(j) - \sqrt{2} s - y)}\\
{} & + c_\star(\sqrt{2}s + X_s(j) + z))e^{\sqrt{2}(- X_s(j) - \sqrt{2}s - z))}\Big\}(1+\epsilon(s))\Big],
\end{align*}
where $s \mapsto \epsilon(s)$ is a random process such that $\lim_{s \to \infty} \epsilon(s) = 0$ a.s., where we used that $
\liminf_{s \to \infty} \min_{j \in \mathcal{N}_s}\left\{\sqrt{2} s  - |X_s(j)| \right\} = \infty$ a.s. This result follows plainly from the fact that the additive martingale converges to 0 a.s. which can be found in \cite{LS1987}.

Therefore, since for any numbers $a_i \in (0,1)^n$
\begin{align*}
  \prod_{i=1}^n (1-a_i) \geq e^{-\sum_i^n \frac{a_i}{1-a_i}} \geq e^{-\frac{1}{1-\max a_i}\sum_i^n a_i},
\end{align*}
and recalling also that 
\begin{align*}
  \lim_{s \to \infty} \max_{j \in \mathcal{N}_s} \left\{ |\sqrt{2} s - X_s(j)| e^{\sqrt{2}(X_s(j) - \sqrt{2} s)} \right\}= 0
\end{align*}
almost surely, we obtain from \eqref{eqn:branchingProp1d} that
\begin{align*}
& \liminf_{s \to \infty} \liminf_{t \to \infty} \P\left(M^+_t - m_t \leq y, -M^-_t - m_t \leq z \ \Big| \ \mathcal{F}_s\right) \\
\geq {} &  \liminf_{s \to \infty} \exp\Big(- c_\star\sum_{j \in \mathcal{N}_s}(y - (X_s(j) - \sqrt{2} s)) e^{-\sqrt{2}(y - (X_s(j) - \sqrt{2} s))} \\
& \quad \quad \quad \quad - c_\star \sum_{j \in \mathcal{N}_s} (z + ( X_s(j) + \sqrt{2} s))e^{-\sqrt{2}(z + ( X_s(j) + \sqrt{2} s))}\Big).
\end{align*}
Using again that the additive martingale $\sum_{j \in \mathcal{N}_s} e^{\sqrt{2} (X_s(j) - \sqrt{2} s)}$ converges to~$0$ a.s. we eventually obtain that
\begin{multline}
  \label{eqn:lbBranchingProp1d}
  \liminf_{s \to \infty} \liminf_{t \to \infty} \P\left(M^+_t - m_t \leq y, -M^-_t - m_t \leq z \ \Big| \ \mathcal{F}_s\right) \\
\geq \exp \left(-c_\star Z_\infty e^{-\sqrt{2}y} -c_\star Z_\infty^- e^{-\sqrt{2}z}\right).
\end{multline}

To obtain a similar upper bound, we use that for any pair of events $F,G$, $\P(F \cap G) = 1 - \P(F^c) - \P(G^c) + P(F^c \cap G^c)$, hence recalling \eqref{eq:diff_mt},
\begin{align*}
\begin{split}
\nu_{s,t}(x,y,z) \leq {} & 1 - \P\left(M^+_{t-s} - m_{t-s} \geq y - (x - \sqrt{2} s) - \frac{3}{2\sqrt{2}} \frac{s}{t}\right) \\
{} & - \P\left(-M^-_{t-s} - m_{t-s} \geq z + ( x + \sqrt{2} s) - \frac{3}{2\sqrt{2}} \frac{s}{t}\right)\\
& + \zeta_{s,t}(x,y,z),
\end{split}
\end{align*}
where
\begin{multline*}
\zeta_{s,t}(x,y,z) :=  \P \bigg(M^+_{t-s} - m_{t-s} \geq y - (x - \sqrt{2} s) - \frac{3}{2\sqrt{2}} \frac{s}{t},\\
-M^-_{t-s} - m_{t-s} \geq z + ( x + \sqrt{2} s) - \frac{3}{2\sqrt{2}} \frac{s}{t}\bigg).
\end{multline*}
Note that
\begin{align*}
\zeta_{s,t}(x,y,z) \leq {} & \P \bigg(M^+_{t-s} - m_{t-s} \geq y \wedge z  + \sqrt{2} s - \frac{3}{2\sqrt{2}} \frac{s}{t}\bigg),
\end{align*}
hence
\begin{align*}
\limsup_{t \to \infty} \zeta_{s,t}(x,y,z) \leq \omega(y \wedge z + \sqrt{2} s).
\end{align*}
As a result, with similar computations as in the proof of the lower bound, there exists a process $\epsilon(s)$ converging a.s. to $0$ as $s \to \infty$ such that,
\begin{align*}
\limsup_{t \to \infty} & \prod_{j \in \mathcal{N}_s} \nu_{s,t}(X_s(j),y,z) \\
\leq {} & \prod_{j \in \mathcal{N}_s} \Big[1 - \Big\{c_\star(\sqrt{2} s - X_s(j) + y) e^{\sqrt{2}(X_s(j) - \sqrt{2} s - y)}\\
{} & + c_\star(\sqrt{2}s + X_s(j) + z))e^{\sqrt{2}(- X_s(j) - \sqrt{2}s - z)}\\
{} & - c_\star(\sqrt{2}s + y \wedge z) e^{\sqrt{2}(- \sqrt{2}s - y \wedge z)}
\Big\}(1+\epsilon(s))\Big].
\end{align*}
Using that for any numbers $a_i < 1$, $\prod_{i=1}^n (1-a_i) \leq e^{-\sum_i^n a_i}$, and noting that for any $C \in \R$
\begin{align*}
\lim_{s \to \infty} \sum_{j \in \mathcal{N}_s} (\sqrt{2}s + C) e^{\sqrt{2}(-\sqrt{2} s - C)} = 0
\end{align*}
almost surely, we obtain that
\begin{align*}
& \limsup_{s \to \infty} \limsup_{t \to \infty} \P(M^+_t - m_t \leq y, -M^-_t - m_t \leq z \ | \ \mathcal{F}_s) \\
\leq {} & \exp (-c_\star Z_\infty e^{-\sqrt{2}y} -c_\star Z_\infty^- e^{-\sqrt{2}z}),
\end{align*}
which, together with \eqref{eqn:lbBranchingProp1d}, completes the proof.
\end{proof}

\begin{remark}
With similar computations to the ones made in the proof of Theorem \ref{thm:main1d} we would be able to prove joint convergence in distribution of
\[
  \left(\sum_{j \in \mathcal{N}_t} \delta_{X_t(j) - m_t}, \sum_{j \in \mathcal{N}_t} \delta_{-X_{t}(j) - m_t} \right)
\]
towards a pair of decorated Poisson point processes with random intensities $c_\star Z_\infty e^{-\sqrt{2} x} \dd x$ and $c_\star Z_\infty^- e^{-\sqrt{2}x}\dd x$ respectively, and such that these processes are independent conditionally on $(Z_\infty,Z_\infty^-)$. This result can be thought of as a unidimensional version of Conjecture \ref{con:extremal_process}.
\end{remark}

\section*{Acknowledgements}
We thank Alison Etheridge, Christina Goldschmidt and the referee for many helpful comments. A significant portion of the work was conducted while B.M. was invited at the University of Oxford, he gratefully acknowledges hospitality and the financial support. B.M. is partially supported by ANR grant MALIN (ANR-16-CE93-0003).
\bibliographystyle{plain}
\bibliography{Bibliography}
\end{document}